\keywords{free group, Outer space, attracting lamination, Hausdorff dimension}
\theoremstyle{definition}\newtheorem{defn}[thm]{Definition}
\renewcommand{\phi}{\varphi}
\newcommand{\N}{\mathbb N}
\newcommand{\R}{\mathbb R}
\newcommand{\Aut}{{\rm Aut}}
\newcommand{\Out}{{\rm Out}}
\def\epsilon{\varepsilon}
\def\phi{\varphi}
\def\hat{\widehat}
\newcommand{\cvr}{\mbox{cv}_r}
\begin{document}

\title[On the Hausdorff dimension and attracting laminations]{On the Hausdorff dimension and attracting laminations for fully irreducible automorphisms of free groups}

\author[I.~Kapovich]{Ilya Kapovich\,\texorpdfstring{\orcidlink{0000-0002-7694-6236}}{}}
\address{Department of Mathematics and Statistics, Hunter College of CUNY, 695 Park Ave, New York, NY 10065, U.S.A.}
\email{ik535@hunter.cuny.edu}
\thanks{\textit{2020 Mathematics Subject Classification.} Primary 20F65; Secondary 20F10, 20F67, 37B10, 37D99, 57M99.}

\dedicatory{Dedicated to Alexei Miasnikov}

\begin{abstract}
Motivated by a classic theorem of Birman and Series about the set of complete simple geodesics on a hyperbolic surface, we study the Hausdorff dimension of the set of endpoints in $\partial F_r$ of some abstract algebraic laminations associated with free group automorphisms.

For an exponentially growing outer automorphism $\phi\in\Out(F_r)$ we show that the set of endpoints $\mathcal E_{L}\subseteq \partial F_r$ of any of the \emph{attracting laminations} $L$ of $\phi$ has Hausdorff dimension $0$ and packing dimension $0$, for any visual metric on the boundary $\partial F_r$, and similarly that $L\subseteq \partial^2 F_r$ (where $\partial^2 F_r$ is equipped with the product metric of a visual metric) has Hausdorff dimension $0$ and packing dimension $0$. If $\phi\in\Out(F_r)$ is an atoroidal and fully irreducible, we deduce the same conclusions for the ending lamination $\Lambda_\phi$ of $\phi$ that gets collapsed by the Cannon-Thurston map $\partial F_r\to \partial G_\phi$ for the associated free-by-cyclic group $G_\phi=F_r\rtimes_\phi\mathbb Z$. By contrast, the set of endpoints of any of these laminations has upper box dimension $>0$ for any visual metric on $\partial F_r$.
\end{abstract}

\maketitle

\section{Introduction}

%\begin{thm}\label{t:1}
%This is a test theorem.
%\end{thm}

%In Theorem~\ref{t:1} we have similar to %\cite{An}

A classic 1985 result of Birman and Series~\cite{BirSer} shows that if $S$ is a closed hyperbolic surface then the union of all complete simple geodesics on $S$ is nowhere dense set of Hausdorff dimension $1$. Their result implies, in particular, that if $\lambda$ is a geodesic lamination on $S$ and $\tilde\lambda$ is the lift of $\lambda$ to $\mathbb H^2=\tilde S$, then the set $L$ of all $(p,q)$ in $\partial \mathbb H^2\times \partial \mathbb H^2 - diag=\mathbb S^1\times \mathbb S^1 - diag$ such that the geodesic from $p$ to $q$ in $\mathbb H^2$ belongs to $\tilde \lambda$, is nowhere dense in $\mathbb S^1\times \mathbb S^1 - diag$ and has Hausdorff dimension $0$. Moreover, since the projection from $\mathbb S^1\times \mathbb S^1 - diag$ to $\mathbb S^1$ is Lipschitz, and Lipschitz maps do not increase the Hausdorff dimension, it follows that $\{p\in \mathbb S^1 : (p,q)\in L \text{ for some } q\}$ has Hausdorff dimension $0$ as a subset of $\mathbb S^1$.

There have been many generalizations and extensions of the Birman-Series theorem; in particular see the work of Lenzhen and Souto~\cite{LS18} and of Sapir~\cite{S21}. It is interesting to understand what kind of analogs of the Birman-Series theorem hold for arbitrary word-hyperbolic groups. The notion of a simple geodesic usually no longer has a reasonable interpretation there, but for a non-elementary word-hyperbolic group $G$ one can still study "abstract algebraic laminations" on $G$, see Section~\ref{section:AAL} below. Namely, an \emph{abstract algebraic lamination} $L$ on $G$ is a subset $L\subseteq \partial ^2 G=\partial G\times \partial G - diag$ such that $L$ is closed, $G$-invariant and flip-invariant. Abstract algebraic laminations on word-hyperbolic groups naturally arise as supports of geodesic currents, as non-injectivity sets of Cannon-Thurston maps, and also come from some constructions associated with iterating automorphisms and with $\mathbb R$-tree actions. In this paper we consider the case of $G=F_r$, the free group of finite rank $r\ge 2$, which already provides a wide variety of interesting "non-classical" examples to consider. 

%Again, since the projection from $\partial^2 F_r$ to $\partial F_r$ is Lipschitz, if an abstract algebraic lamination $L$ on $F_r$ has Hausdorff dimension $0$ as a subset of $\partial^2 F_r$ implies that its projection $\mathcal E_L$ to $\partial F_r$ has Hausdorff dimension $0$ there.  

 For an arbitrary metric space $(X,d)$ and a subset $A\subseteq X$, one has $\dim_H(A)\le \dim_p(A)$, where $\dim_H(A)$ is the Hausdorff dimension of $A$ and where $\dim_p(A)$ is the \emph{packing dimension} of $A$. The packing dimension behaves  better than the Hausdorff dimension with respect to direct products, and we obtain our Hausdorff dimension results in this paper as consequences of proving that for the sets under consideration the packing dimension is equal to $0$.

Even in the free group case, in terms of looking for possible analogs of the Birman-Series theorem, it is necessary to limit the class of abstract algebraic laminations on $F_r$ under consideration to examples that have some similarities to "nice" geodesic laminations on hyperbolic surfaces. Otherwise one can easily get various (non-interesting) examples with positive Hausdorff dimension. E.g. the boundary $\partial CV_r$ of the projectivized Outer space $CV_r$ contains the (projective class of) the Bass-Serre tree $T$ corresponding to a proper free product decomposition $F_r=U\ast V$. If the free factor $U$ is non-cyclic, then the dual algebraic lamination $L(T)$ of $T$ (as defined in \cite{CHL08b}) contains a copy of $\partial^2 U$, which leads to positive Hausdorff dimension for $L(T)\subseteq \partial^2 F_r $ and for its set of endpoints in $\partial F_r$.

Our additional motivation comes from trying to understand how large, e.g. in terms its Hausdorff dimension, the set of non-conical limit points is for various convergence group actions of non-elementary word-hyperbolic groups on boundaries of Gromov-hyperbolic spaces, particularly those actions that come together with a Cannon-Thurston map (see more on this topic below).  This question is only partially understood even in the classical Kleinian groups context. A recent result of M. Kapovich and Liu~\cite{KL20} shows that if $\Gamma\le Isom(\mathbb H^3)$ is a finitely generated, non-free, torsion-free geometrically infinite Kleinian group such that the injectivity radius of $\mathbb H^3/\Gamma$ is bounded away from $0$, then the Hausdorff dimension of the non-conical limit set of $\Gamma$ is positive. This result applies, in particular, to the fiber surface subgroups of closed fibered hyperbolic 3-manifolds. However, the case of free geometrically infinite groups is not yet understood even in the classical Kleinian groups setting. Note, however, that for a f.g. geometrically infinite $\Gamma\le Isom(\mathbb H^3)$, such that the injectivity radius of $\mathbb H^3/\Gamma$ is bounded away from $0$ and the geometrically infinite ends are bounded by incompressible surfaces, the Hausdorff dimension of the non-horospherical limit set is known to be $0$ (Mahan Mj, private communication, via M. Kapovich). See the recent paper of  Lecure and Mj~\cite{LM18} for related results on non-horospherical limit points. The non-injectivity set of the Cannon-Thurston map, that is, the set of endpoints of the \emph{Cannon-Thurston algebraic lamination}, provides a nice canonical subset of the set of non-conical limit points (and of non-horospherical limit points). Thus it becomes interesting to try to prove that the set of endpoints of the Cannon-Thurston lamination has Hausdorff dimension $0$, and the case of the free group $F_r$ is particularly intriguing in light of the above discussion.

Recall that an element $\phi\in Out(F_r)$ is \emph{exponentially growing} if for some (equivalently, any) free basis $A$ of $F_r$ there exists $1\ne w\in F_r$ such that
\[
\lambda_A(\phi, w)=\limsup_{n\to\infty}\sqrt[n]{||\phi^n(w)||_A} >1.
\]
Here $||u||_A$ is the cyclically reduced length of $u\in F_r$ with respect to $A$. It is known~\cite{Le09} that in the above formula for an arbitrary $\phi\in Out(F_r)$ the actual limit $\lambda(\phi, w)\ge 1$ as $n\to\infty$ always exists and is independent of $A$. Moreover, for $\phi\in Out(F_r)$ there are only finitely many possible values for $\lambda(\phi, w)$ as $w$ varies over $F_r-\{1\}$, see \cite{Le09}. If $\phi\in Out(F_r)$ is fully irreducible then $\lambda(\phi)=\lambda(\phi, w)>1$, for all $w\ne 1$ in $F_r$ such that $[w]$ is a non-periodic conjugacy class for $\phi$.

In \cite{BFH00} Bestvina, Feighn and Handel, given any exponentially growing $\phi\in \Out(F_r)$ define a finite nonempty set of \emph{$\mathcal L(\phi)$ attracting laminations} for $\phi$. If $\phi$ is fully irreducible, the set $\mathcal L(\phi)$ consists of a single element, namely $\mathcal L(\phi)=\{L_\phi\}$, and $L_\phi$ is called \emph{the attracting lamination} of $\phi$. Thus for an exponentially growing $\phi\in \Out(F_r)$, we have $\mathcal L(\phi)=\{L_1,\dots, L_m\}$ and each $L_k=L_k(\phi)\subseteq \partial^2 F_r$ is an abstract algebraic lamination on $F_r$.

The technical definition of elements of $\mathcal L(\phi)$ in \cite{BFH00} in terms of relative train track representatives of $\phi$ (which is ultimately shown to be independent to be of the choice of such representatives) is rather complicated and we omit its details here. We explain how this definition works for fully irreducible $\phi\in \Out(F_r)$  in Section~\ref{sect:L} below and note that for an arbitrary exponentially growing $\phi\in \Out(F_r)$ elements of $\mathcal L(\phi)$ can be recovered from understanding fixed points in $\partial F_r$ of representatives of $\phi$ in $\Aut(F_r)$. The latter fact turns out to be sufficient for our purposes using a recent result of Hilion and Levitt~\cite{HL22}.

Attracting laminations of fully irreducibles play an important role in the study of the dynamics and geometry of free group automorphisms and appear naturally in many contexts, see for example \cite{BFH97,BFH00,KL10,KL15,DGT23}, etc.  More generally, attracting laminations of exponentially growing elements of $Out(F_r)$ proved to be a useful tool in the structure theory of $Out(F_r)$ and its subgroups, see ~\cite{BFH00,HM19,HM20,HM23}.

Recall that the (unprojectivized) \emph{Outer space} $\cvr$ consists of all $\mathbb R$-trees $T$ equipped with a minimal free discrete isometric action of $F_r$, where two such trees are considered equal if there exists an $F_r$-equivariant isometry between them. See Section~\ref{sect:OS} below for further discussion and background references.

When working with products of metric spaces (and their subsets), we take the product metric to be the supremum metric. That is, if $(X,d_X)$ and $(Y, d_Y)$ are two metric spaces, we equip $X\times Y$ with the \emph{product metric} $\hat d$ defined as $\hat d((x,y), (x',y'))=\max\{ d_X(x,x'), d_Y(y,y')\}$, where $x,x'\in X$ and $y,y'\in Y$. Most other natural choices of a metric on $X\times Y$ are bi-Lipschitz equivalent to $\hat d$, and hence they don't change the Hausdorff and packing dimensions of subsets of $(X\times Y. \hat d)$.

If $(X,d)$ is a Gromov-hyperbolic metric space with a basepoint $x_0\in X$ and if $a>1$, a metric $d$ on $\partial X$ is called a \emph{visual metric} with the \emph{visual parameter $a$} if there exists $C\ge 1$ such that for any $p,q\in \partial X$ 
\[
\frac{1}{C}a^{-(p,q)_{x_0}}\le d(p,q)\le C a^{-(p,q)_{x_0}}
\]
where $(p,q)_{x_0}$ is the Gromov prpduct of $p$ and $q$ with respect to $x_0$, see \cite{KB02} for precise definitions. If $X$ is an \emph{$\R$-tree} (that is a $0$-hyperbolic geodesic metric space) then $(p,q)_{x_0}=d_T(x_0,\tau)$ where $\tau$ is the bi-infinite geodesic from $p$ to $q$ in $T$; moreover, in this case $d_a=a^{-(p,q)_{x_0}}$ is a visual metric on $\partial X$ with the visual parameter $a$, for any $a>1$. This is the only explicit construction of a visual metric that we will need in this paper. For an arbitrary Gromov-hyperbolic geodesic metric space $(X,d)$ it is known that is some $a_0>1$ such that for every $1<a\le a_0$ there exists a visual metric on $\partial X$ with visual parameter $a$. Again, see \cite{KB02} for details. As noted above, when working with $\partial G\times \partial G$, we will usually equip it with the product metric $\hat d$ corresponding to some visual metric $d$ on $\partial G$. For subsets of $\partial G\times \partial G$ (including $\partial^2 G=\partial G\times \partial G - diag$) we will use the restriction of the product metric $\hat d$.

Our main result is (see Theorem~\ref{thm:HD-exp} below):
\begin{thm}\label{thm:A}
Let $\phi\in\Out(F_r)$ (wher $r\ge 2$) be exponentially growing and let $L\in \mathcal L(\phi)$, $L\subseteq \partial^2 F_r$ be an attracting lamination of $\phi$. Let $\mathcal E_{L}\subseteq \partial F_r$ be the set of endpoints of $L$.
 Let $a>1$ be an arbitrary visual parameter and let $d$ be a visual metric on $\partial F_r$ with visual parameter $a$, and let $\hat d$ be the corresponding product metric on $\partial F_r\times \partial F_r$.
 
 Then the metric spaces $(\partial F_r, d)$ and $(\partial^2 F_r, \hat d)$ we have

\[
\dim_H(\mathcal E_{L})=\dim_p(\mathcal E_{L})=0, \quad \text{ and } \dim_H(L)=\dim_p(L)=0
\]
where $\dim_H$ is the Hausdorff dimension and $\dim_p$ is the packing dimension.

\end{thm}

To prove Theorem~\ref{thm:A}, we establish a more general result, Theorem~\ref{thm:Dim} below (see also Corollary~\ref{cor:var} below for a more precise and general statement), which shows that the conclusion of Theorem~\ref{thm:A} holds for any abstract algebraic lamination $L$ on $F_r$ whose laminary language $L_\Gamma$, with respect to some (equivalently, every) marked graph $\Gamma$ in $\cvr$, is subexponentially growing in terms of its subword complexity (the latter condition is  equivalent to the laminary language $L_\Gamma$ of $L$ having topological entropy $h(L_\Gamma)=0$, see Remark~\ref{rem:ent} and Corollary~\ref{cor:var} below).

\begin{rem}
For a bounded metric space $S$ one has the general inequalities $\dim_H(S)\le \dim_p(S)\le \overline\dim_B(S)$ where $\overline\dim_B(S)$ is the \emph{upper box dimension} of $S$,  see Remark~\ref{rem:dimB} for the definition. As noted in Remark~\ref{rem:dimB} below, our proof of Theorem~\ref{thm:Dim} actually shows that standard "cylindrical slices" of $\mathcal E_L$ for a subexponentially growing algebraic lamination $L$ on $F_r$ have upper box dimension $0$. However, the upper box dimension is not well-behaved under countable unions. As again observed in Remark~\ref{rem:dimB}, since for a nonempty algebraic lamination $L$ the set of its endpoints $\mathcal E_L$ is dense in $\partial F_r$, it follows that $\overline\dim_B(\mathcal E_L)=\overline\dim_B(\partial F_r)$,  while  $\overline\dim_B(\partial F_r)>0$ for all visual metrics. Thus the conclusions of Theorem~\ref{thm:Dim} and Theorem~\ref{thm:A} cannot be extended to the upper box dimension.
\end{rem}

We also show, in Corollary~\ref{cor:equiv}, that the growth function of the language of an algebraic lamination on $L$ is an invariant (for a natural notion of equivalence) depending only on $L$ and not on the choice of a marked metric graph in $\cvr$. 
A result of Lustig~\cite{Lu22} implies this fact for free bases of $F_r$, that is for the case where the marked graphs are $r$-roses, and we provide an additional argument to cover the general case.

We first prove Theorem~\ref{thm:A} for the case where $\phi\in\Out(F_r)$ is fully irreducible (see  Theorem~\ref{thm:HD-fi}) using a more direct argument, using the fact that in this case the laminary language $(L_\phi)_\Gamma$ can be described via a primitive substitution (up to some technicalities associated with distinguishing between the cases where the attracting lamination $L_\phi$ is orientable or non-orientable, in the sense of \cite{DGT23}). The languages associated with primitive substitutions are known to be subexponentially growing by classic results from symbolic dynamics~\cite{LM21,Q}. We also show, see Corollary~\ref{cor:equiv} below, that, for an abstract algebraic lamination $L$ on $F_r$, the equivalence class of the growth function of the laminary language $L_\Gamma$ does not depend on the choice of a marked graph $\Gamma$.

For the case of an arbitrary exponentially growing $\phi\in\Out(F_r)$, we use a recent result of Hilion and Levitt~ \cite{HL22} mentioned above, who proved that in this case for every attracting lamination $L\in \mathcal L(\phi)$ the laminary language $L_\Gamma$ of $L$ is subexponentially growing (and in fact its complexity function $p_{L_\Gamma}(n)$ is at most quadratic). Using our Theorem~\ref{thm:Dim} this implies Theorem~\ref{thm:A}.

If $H\le G$ is a non-elementary word-hyperbolic subgroup of word-hyperbolic group $G$ and the inclusion $i: H\to G$ extends to a continuous map $\partial i:\partial H\to\partial G$, this extension $\partial i$ is called the \emph{Cannon-Thurston map}. The first nontrivial example of the Cannon-Thurston map was constructed by Cannon and Thurston~\cite{CT} in 1984 for fiber surface subgroups of closed hyperbolic $3$-manifold groups fibering over the circle. More generally, if $H$ is a non-elementary word-hyperbolic group $H$ with a convergence action on a metrizable compactum $Z$ and if this action extends to a continuous $H$-equivariant map $\partial i:\partial H\to Z$, this extension is also called the \emph{Cannon-Thurston map}. (In the previous setting one takes $Z=\partial G$.) Associated to $\partial i$ one can define $\Lambda=\{(p,q)\in \partial^2 H: \partial i(p)=\partial i(q)\}$. Then $\Lambda$ is an abstract algebraic lamination on $H$ called the \emph{Cannon-Thurston lamination}.

For $\phi\in \Out(F_r)$ put $G_\phi=F_r\rtimes_\phi \mathbb Z$ be the corresponding free-by-cyclic group. By \cite{Br00}, $G_\phi$ is word-hyperbolic if and only if $\phi$ is \emph{atoroidal}, that is $\phi$ has no nontrivial periodic conjugacy classes in $F_r$. If $G_\phi$ is word-hyperbolic, then a general result of Mitra~\cite{M98a} implies that the Cannon-Thurston map $\partial i:\partial F_r\to\partial G_\phi$ exists and is onto. Moreover, another general result of Mitra~\cite{M97} describes the fibers of $\partial i$ in this case. The quotient group $G_\phi/F_r=\mathbb Z$ has $\partial \mathbb Z=\{\pm \infty\}$. Associated to the two points comprising $\partial \mathbb Z$, there are \emph{ending algebraic laminations} $\Lambda_\phi, \Lambda_{\phi^{-1}}\subseteq \partial^2 F_r$ on $F_r$; see Section~\ref{sect:EL} below for their definition. The results of \cite{M97} imply that if $G_\phi$ is word-hyperbolic then for two distinct point $p,q\in \partial F_r$ one has $\partial i(p)=\partial i(q)$ if and only if $(p,q)\in \Lambda_\phi \cup \Lambda_{\phi^{-1}}$.  Thus the Cannon-Thurston lamination here is $\Lambda= \Lambda_\phi \cup \Lambda_{\phi^{-1}}$. In the case where $\phi\in \Out(F_r)$ is atoroidal and fully irreducible, Kapovich and Lustig~\cite{KL15} described the precise relationship between $L_\phi$ and $\Lambda_\phi$ and proved, in particular, that $\Lambda_\phi$ is equal to the transitive closure of $L_\phi$. Using this result and Theorem~\ref{thm:A} above, we obtain (see Corollary~\ref{cor:HD-fi} below):

\begin{cor}\label{cor:B}
Let $\phi\in \Out(F_r)$ is atoroidal and fully irreducible and let $G_\phi=F_r\rtimes_\phi \mathbb Z$ be the corresponding free-by-cyclic group. 

Let $\Lambda_\phi\subseteq \partial^2 F_r$ be the ending algebraic lamination of $\phi$. Let $\Lambda\subseteq \partial^2 F_r$ be the Cannon-Thurston lamination for the Cannon-Thurston map corresponding to the inclusion $F_r\le G_\phi$. Let $a>1$ be an arbitrary visual parameter and let $d$ be a visual metric on $\partial F_r$ with visual parameter $a$, and let $\hat d$ be the corresponding product metric on $\partial F_r\times\partial F_r$.

 Then for the metric spaces $(\partial F_r, d)$ and $(\partial F_r\times\partial F_r, \hat d)$  we have

\[
\dim_H(\mathcal E_{\Lambda_\phi})=\dim_p(\mathcal E_{\Lambda_\phi})=0 \text{ and  } \dim_H(\mathcal E_{\Lambda})=\dim_p(\mathcal E_{\Lambda})=0
\]
and 
\[
\dim_H(\Lambda_\phi)=\dim_p(\Lambda_\phi)=0 \text{ and  } \dim_H(\Lambda)= \dim_p(\Lambda)=0
\]

\end{cor}

In Section~\ref{sect:problems} we discuss several open problems raised by Theorem~\ref{thm:A} and Corollary~\ref{cor:B} and by the earlier related results.

{\bf Acknowledgements.} I am grateful to  Martin Lustig for helpful discussions, and to Gilbert Levitt for alerting me about the results of \cite{HL22}. I also thank the referee for helpful suggestions on improving the paper. I am especially grateful to my brother Michael Kapovich (1963-2026) whose questions provided the main inspiration and motivation for writing this paper.

The author was supported by the NSF grant DMS-1905641.

\medskip

{\bf Data availability statement.} We do not analyze or generate any datasets, because our work proceeds within a theoretical and mathematical approach.

\section{Symbolic dynamical systems}

We only give a brief overview of some basic facts and terminology from symbolic dynamics here and refer the reader to \cite{LM21,Q} for more detailed background information.

Throughout this paper we will assume that $\Sigma$ is a nonempty finite alphabet consisting of at least two elements.

\subsection{Shifts and subshifts}

The free monoid $\Sigma^\ast$ is the set of all finite words (including the empty word) over the alphabet $\Sigma$. For a finite word $v\in \Sigma^\ast$ we denote by $|v|$ the length of $v$. The alphabet $\Sigma$ is endowed with the discrete topology, and there are two natural associated \emph{full shift} spaces, the \emph{one-sided full shift} $\Sigma^\mathbb N$ (where $\mathbb N=\mathbb Z_{\ge 0}=\{0,1,2,3,\dots \}$) and  the \emph{two-sided full shift} $\Sigma^\mathbb Z$. Both $\Sigma^\mathbb N$ and $\Sigma^\mathbb Z$ are equipped with the product topology.  These spaces also come equipped with continuous \emph{shift maps} $s: \Sigma^\mathbb N\to \Sigma^\mathbb N$ and $s:\Sigma^\mathbb Z\to\Sigma^\mathbb Z$, defined by the same formula, $\big(s\left( (x_i)_i\right)\big)_j=x_{j+1}$. Closed $s$-invariant subsets of $\Sigma^\mathbb N$ and $\Sigma^\mathbb Z$ are called \emph{subshifts}.  A finite nonempty subword of a finite, semi-infinite or bi-infinite word $w$ over $\Sigma$ is called a \emph{factor}; the set of all factors of $w$ is denote $\mathcal F[w]$.

For a subshift $X$ of $\Sigma^\mathbb N$ or of $\Sigma^\mathbb Z$, the \emph{language} $\mathcal F[X]\subseteq \Sigma^\ast$ is $\mathcal F[X]:=\cup_{w\in X} \mathcal F[w]$.

\subsection{Substitutions}

For a finite nonempty alphabet $\Sigma$, a \emph{substitution} is a free monoid homomorphism $\theta:\Sigma^\ast\to\Sigma^\ast$ such that for every letter $a\in \Sigma$ we have $|\sigma(a)|>0$. Since $\Sigma^\ast$ is a free monoid, $\theta$ is uniquely determined by specifying a family of nonempty words $(\sigma(a))_{a\in \Sigma}$ in $\Sigma^\ast$.  A substitution $\theta:\Sigma^\ast\to\Sigma^\ast$ is called \emph{primitive} if there exists an integer $k\ge 1$ such that for all letters $a,b\in \Sigma$ the word $\theta^k(a)$ contains $b$. For a substitution $\theta:\Sigma\to\Sigma$ we denote by $\mathcal F[\theta]$ the set of all words $v\in \Sigma^\ast$ such that there exist $m\ge 1$ and $a\in \Sigma$ such that $v$ is a factor of $\theta^m(a)$; that is
\[
\mathcal F[\theta]=\cup_{m=1}^\infty\cup_{a\in \Sigma} \mathcal F[\theta^m(a)].
\]

Let $\theta:\Sigma^\ast\to\Sigma^\ast$ be a primitive substitution. There are two subshifts $X_\theta\subseteq \Sigma^\mathbb Z$ and $X_\theta^+\subseteq \Sigma^\mathbb N$ naturally associated with $\theta$. The subshift  $X_\theta\subseteq \Sigma^\mathbb Z$  is defined as the set of all $w\in \Sigma^\mathbb Z$ such that every factor of $w$ belongs to $\mathcal F[\theta]$. Similarly, $X_\theta^+\subseteq \Sigma^\mathbb Z$  is defined as the set of all $w\in \Sigma^\mathbb N$ such that every factor of $w$ belongs to $\mathcal F[\theta]$.  In both cases one has $\mathcal F[X_\theta]=\mathcal F[X_\theta^+]=\mathcal F[\theta]$.  The one-sided subshift $X_\theta^+$ also has a useful description in terms of periodic points of $\theta$. We say that $a\in \Sigma$ is a \emph{periodic direction} if there exists $k\ge 1$ such that $\theta^k(a)$ begins with $a$. The subset $\Sigma_0$ of $\Sigma$ of all periodic directions is nonempty. Choose $k\ge 1$ such that for every $a\in \Sigma_0$ for $\theta'=\theta^k$ we have $\theta'(a)$ starts with $a$. 
Then, the word $a$ is an initial segment of $\theta'(a)$, the word $\theta'(a)$ is an initial segment of $(\theta')^2(a)$, and so on. For each $a\in \Sigma_0$ we can define an \emph{eigenray} $\rho_a\in \Sigma^\mathbb N$ such that for all $m\ge 1$ the word $(\theta')^m(a)$ is an initial segment of $\rho_a$. By construction one has $\theta'(\rho_a)=\rho_a$ for all $a\in A$. It is then true (see \cite{Q}) that for every $a\in \Sigma$ we have $\mathcal F[\theta]=\mathcal F[\rho_a]$ and, moreover, the closure of the $s$-orbit of $\rho_a$ in $\Sigma^\mathbb N$ equals $X_\theta^+$. We will see further below that the attracting lamination for a fully irreducible outer automorphism $\phi\in \Out(F_r)$ is defined via a rather similar procedure to $X_\theta$ and $X_\theta^+$, using a train track map $f:\Gamma\to\Gamma$ representing $\phi$.

\subsection{Complexity functions}\label{sect:compl}

By a \emph{language} over a nonempty finite alphabet $\Sigma$ we mean a subset $\mathcal L\subseteq \Sigma^\ast$.

For a language $\mathcal L\subseteq \Sigma^\ast$ and an integer $n\ge 1$ we denote by $p_{\mathcal L}(n)$ the number of words $v\in \mathcal L$ with $|v|=n$ and we denote by $\beta_{\mathcal L}(n) $ the the number of nonempty words $v\in \mathcal L$ with $|v|\le n$. By convention we also set $p_{\mathcal L}(0)=\beta_{\mathcal L}(0)=0$.
We define the \emph{topological entropy} $h(\mathcal L)$ of $\mathcal L$ as
\[
h(\mathcal L)=\limsup_{n\to\infty}\frac{\log p_{\mathcal L}(n)}{n}.
\]

\begin{rem}\label{rem:c}
Note that if $X$ is a subshift  of $\Sigma^\mathbb N$ or of $\Sigma^\mathbb Z$, the language $\mathcal F[X]$ is closed under taking subwords and hence the complexity function $p_{\mathcal F[X]}(n)$ satisfies $p_{\mathcal F[X]}(n)\le p_{\mathcal F[X]}(n+1)$ for all $n\ge 1$. Moreover, it is known~\cite{LM21} that in this case $p_{\mathcal F[X]}(n)$ is submultiplicative, that is satisfies $p_{\mathcal F[X]}(n+m)\le p_{\mathcal F[X]}(n)p_{\mathcal F[X]}(m)$, where $m,n\ge 1$.
\end{rem}

For a subshift $X$ of $\Sigma^\mathbb N$ or of $\Sigma^\mathbb Z$ the \emph{topological entropy} $h(X)$ is defined as
\[
h(X)=\lim_{n\to\infty} \frac{\log p_{\mathcal F[X]}(n)}{n}.
\]
(It is known, in view of submultiplicativity of $p_{\mathcal F[X]}(n)$, that the above limit actually exists.) Note that by definition $h(X)=h(\mathcal F[X])$.

A key basic result from symbolic dynamics (see \cite[Proposition 5.12]{Q}) says:

\begin{prop}\label{prop:primsubst}
Let $\theta:\Sigma^\ast\to\Sigma^\ast$ be a primitive substitution. Then there exists $C>0$ such that for all $n\ge 1$
\[
p_{\mathcal F[\theta]}(n)\le Cn.
\]
\end{prop}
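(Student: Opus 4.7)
The strategy is to combine exponential length control on the iterates $\theta^k(a)$, coming from primitivity and Perron--Frobenius, with the observation that every factor of length $n$ in $\mathcal F[\theta]$ already lives inside an image $\theta^k(a_1a_2)$ of a two-letter factor, provided $k$ is chosen so that $\lambda^k$ is of order $n$. The overall bound of form $p(n)\le |\Sigma|^2\cdot(\text{length of }\theta^k(a_1a_2))$ then gives the required linear estimate.

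\textbf{Step 1 (Length control via Perron--Frobenius).} Let $M_\theta$ be the $\Sigma\times\Sigma$ non-negative integer matrix whose $(a,b)$-entry counts occurrences of $b$ in $\theta(a)$, so that $|\theta^k(a)|=\sum_b (M_\theta^k)_{a,b}$. Primitivity of $\theta$ means precisely that some power $M_\theta^N$ has all entries positive, so Perron--Frobenius produces a dominant eigenvalue $\lambda\ge 1$ and constants $0<c_1\le c_2$ with
\[
c_1\lambda^k \;\le\; |\theta^k(a)| \;\le\; c_2\lambda^k \qquad\text{for every }a\in\Sigma,\ k\ge 0.
\]
If $\lambda=1$ then $M_\theta$ is a permutation matrix, $\theta$ permutes $\Sigma$, and $\mathcal F[\theta]$ contains at most $|\Sigma|$ words of each length; the claim is immediate in this degenerate case, so we assume $\lambda>1$.

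\textbf{Step 2 (Choice of $k$).} Given $n\ge 1$, let $k=k(n)$ be the smallest non-negative integer with $\min_{a\in\Sigma}|\theta^k(a)|\ge n$. Minimality gives $\min_a|\theta^{k-1}(a)|<n$, so by Step 1 we have $c_1\lambda^{k-1}<n$, hence $\lambda^k<(\lambda/c_1)\,n$.

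\textbf{Step 3 (Every factor sits in $\theta^k$ of a two-letter word).} Let $v\in \mathcal F[\theta]$ with $|v|=n$. By definition $v$ is a factor of some $\theta^m(c)$; we may assume $m>k$ (otherwise $v$ is trivially a factor of $\theta^m(ca)$ for any $a$ with $ca\in\mathcal F[\theta]$, which exists by primitivity). Write $w:=\theta^{m-k}(c)=d_1d_2\cdots d_s\in\mathcal F[\theta]$; then
\[
\theta^m(c) \;=\; \theta^k(d_1)\,\theta^k(d_2)\cdots \theta^k(d_s).
\]
The occurrence of $v$ inside this concatenation begins at some position of a block $\theta^k(d_j)$. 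Since $|v|=n\le \min_a|\theta^k(a)|\le |\theta^k(d_j)|$ by the choice of $k$, the word $v$ cannot extend past the right end of $\theta^k(d_{j+1})$; hence $v$ is a factor of $\theta^k(d_jd_{j+1})$, and the length-two word $a_1a_2:=d_jd_{j+1}$ lies in $\mathcal F[\theta]$.

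\textbf{Step 4 (Counting).} The number of factors of length $n$ in a word of length $\ell$ is at most $\ell$, so
\[
p_{\mathcal F[\theta]}(n) \;\le\; \sum_{a_1a_2\in\mathcal F[\theta],\ |a_1a_2|=2} |\theta^k(a_1a_2)| \;\le\; |\Sigma|^2\cdot 2c_2\lambda^k \;\le\; \frac{2c_2|\Sigma|^2\lambda}{c_1}\,n,
\]
giving the desired constant $C=2c_2|\Sigma|^2\lambda/c_1$.

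The main obstacle is Step 1: extracting uniform two-sided exponential bounds on all $|\theta^k(a)|$ from primitivity. One must invoke the Perron--Frobenius theorem for the positive matrix $M_\theta^N$, then transfer the comparability to $M_\theta^k$ via the $N$-periodic blocking $k=qN+r$. The remaining steps are essentially combinatorial bookkeeping, but the whole argument does break into the degenerate case $\lambda=1$ (permutation substitutions) which must be disposed of separately at the outset, and one also has to treat small values of $n$ (where $k(n)=0$) by absorbing them into the constant.
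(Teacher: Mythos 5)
Your proof is correct and is essentially the standard argument (the paper does not prove this proposition itself but cites it as [Q, Prop.~5.12]; the approach there is the same: calibrate $k$ so that $\lambda^k\asymp n$ and observe that every length-$n$ factor sits inside some $\theta^k(ab)$ with $ab\in\mathcal F[\theta]$). Two small remarks. First, since the paper fixes $|\Sigma|\ge 2$, the degenerate case $\lambda=1$ in your Step~1 never actually arises: if $M_\theta$ is primitive with a positive left Perron eigenvector $v$ and $vM_\theta^N=v$ with $M_\theta^N$ entrywise positive, then summing forces $|\Sigma|=1$; moreover the claim that such a matrix would be a permutation matrix is off (a permutation matrix on $\ge 2$ letters is never primitive), though this is moot. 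Second, Step~3 silently assumes $j<s$; when the occurrence of $v$ starts in the last block $\theta^k(d_s)$, the bound $|\theta^k(d_s)|\ge n$ forces $v$ to lie entirely inside that block, and one then takes $a_1a_2=d_s a$ for any letter $a$ with $d_s a\in\mathcal F[\theta]$ (such $a$ exists by primitivity), which still contributes to the sum in Step~4. With those edge cases patched, the argument is complete.
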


By summing up, the previous proposition directly implies:
\begin{cor}\label{cor:primsubst}
Let $\theta:\Sigma^\ast\to\Sigma^\ast$ be a primitive substitution. Then there exists $C'>0$ such that for all $n\ge 1$
\[
\beta_{\mathcal F[\theta]}(n)\le C'n^2.
\]

\end{cor}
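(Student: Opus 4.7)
The statement is essentially a summation consequence of Proposition~\ref{prop:primsubst}, and the plan is to simply sum the linear complexity bound over all lengths up to $n$.

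The first step is to unfold the definitions. By the definition in Section~\ref{sect:compl}, for $n\ge 1$ we have
\[
\beta_{\mathcal F[\theta]}(n)=\#\{v\in \mathcal F[\theta] : 1\le |v|\le n\}+\epsilon_0= \sum_{k=1}^{n} p_{\mathcal F[\theta]}(k) + \epsilon_0,
\]
where $\epsilon_0\in\{0,1\}$ accounts for the empty word if one chooses to include it in $\mathcal F[\theta]$. Either way this contributes only an additive constant that is easily absorbed.

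Next I would apply Proposition~\ref{prop:primsubst} to each term of the sum. Let $C>0$ be the constant given there, so that $p_{\mathcal F[\theta]}(k)\le Ck$ for all $k\ge 1$. Then
\[
\beta_{\mathcal F[\theta]}(n)\le \epsilon_0 + \sum_{k=1}^{n} Ck = \epsilon_0 + C\,\frac{n(n+1)}{2}.
\]
Finally, picking any constant $C'>0$ with $C' \ge \epsilon_0 + C$ (for example $C'=C+1$) yields $\beta_{\mathcal F[\theta]}(n)\le C' n^2$ for every $n\ge 1$, which is the desired estimate.

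There is no real obstacle here: the entire argument is a one-line summation of an arithmetic progression, and the only mild subtlety is being consistent about whether the empty word is counted, which is handled by an additive constant. This is why the paper can legitimately describe the corollary as following ``by summing up'' directly from Proposition~\ref{prop:primsubst}.
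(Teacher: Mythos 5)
Your proof is correct and matches the paper's approach exactly: the paper itself justifies the corollary with the phrase ``by summing up,'' and your argument carries out precisely that summation of the linear bound $p_{\mathcal F[\theta]}(k)\le Ck$ over $k=1,\dots,n$ to get the quadratic estimate. The small aside about the empty word is a harmless extra care that does not change anything.
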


Note that Proposition~\ref{prop:primsubst} implies that if $\theta:\Sigma^\ast\to\Sigma^\ast$ is a primitive substitution then $h(X_\theta)=h(X_\theta^+)=0.$

\begin{defn}[Subexponential function]
We will say that a monotone nondecreasing function $f:\mathbb N\to \N$ is \emph{subexponential} or \emph{subexponentially growing} if for every real number $a>1$ we have $\lim_{n\to\infty} \frac{f(n)}{a^n}=0$. 
\end{defn}

This condition has a useful equivalent reformulation:
\begin{lem}\label{lem:subexp}
Let $f:\mathbb N\to \N$ be a monotone nondecreasing function. Then $f$ is subexponential if and only if 
\[
\lim_{n\to\infty} \frac{\log f(n)}{n}=0.
\]
(Here we interpret $\log 0$ as $\log 0=0$.)
\end{lem}
\begin{proof}
(1) Suppose $f(n)$ is subexponential. If $f(n)=0$ for all $n\in N$, there is nothing to prove. Thus we assume that $f(n)\ge 1$ for all $n\ge n_0$ for some $n_0\ge 1$.

Then $f(n)\ge f(n_0)\ge 1$ for $n\ge n_0$, and hence for $n\ge n_0$ we have
\[
\frac{1}{n} \log f(n)\ge \frac{1}{n}\log n_0\to_{n\to\infty} 0,
\]
so that 
\[
\liminf_{n\to\infty} \frac{1}{n} \log f(n)\ge 0.
\]
Now fix any $\epsilon>0$ and put $a=e^\epsilon>1$.
By assumption
\[
\frac{f(n)}{e^{\epsilon n}}=\frac{f(n)}{a^{n}}\to_{n\to\infty} 0
\]
and therefore $f(n)\le e^{\epsilon n}$ for all large enough $n$.
Hence
\[
\frac{1}{n}\log f(n)\le \epsilon
\]
for all large enough $n$, and so
\[
\limsup_{n\to\infty} \frac{1}{n} \log f(n)\le \epsilon.
\]
Since $\epsilon>0$ was arbitrary, we have 
\[
\limsup_{n\to\infty} \frac{1}{n} \log f(n)\le 0.
\]
It follows that 
\[
\lim_{n\to\infty} \frac{\log f(n)}{n}=0,
\]
as required.

(2) Suppose now that
\[
\lim_{n\to\infty} \frac{\log f(n)}{n}=0.
\]

Choose $a>1$. Thus $\log a>0$. The above limit implies that for all large enough $n$ we have
\[
\frac{\log f(n)}{n}<\frac{1}{2}\log a.
\]
Therefore for $n\to\infty$ we have
\[
\log f(n)<\frac{1}{2} n\log a
\]
and 
\[
0\le f(n)<a^{n/2}.
\]
Then for $n\to\infty$
\[
0\le \frac{f(n)}{a^n}<a^{-n/2}\to_{n\to\infty} 0,
\]
which implies 
\[
\lim_{n\to\infty}\frac{f(n)}{a^n}=0.
\]
Since $a>1$ was arbitrary, the function $f(n)$ is subexponential, as required.
\end{proof}
\begin{defn}
We say that a language $\mathcal L\subseteq \Sigma^\ast$ is \emph{subexponentially growing} if the function $\beta_\mathcal L(n)$ is subexponential. Similarly, we say that a subshift $X$ is  \emph{subexponentially growing} if the language $\mathcal F[X]$ is subexponentially growing.
\end{defn}

\begin{rem}\label{rem:ent}
Note that, for a subshift $X$, $p_{\mathcal F[X]}(n)$ is monotone non-decreasing and $\beta_{\mathcal F[X]}(n)=\sum_{j=1}^n p_{\mathcal F[X]}(n)\le np_{\mathcal F[X]}(n)$ for $n\ge 1$. Thus we have $p_{\mathcal F[X]}(n) \le \beta_{\mathcal F[X]}(n)\le n p_{\mathcal F[X]}(n)$. Hence, in view of Lemma~\ref{lem:subexp}, we have $X$ is subexponentially growing if and only if $p_{\mathcal F[X]}(n)$ is subexponential if and only if $h(X)=0$ if and only if $h(\mathcal F[X])=0$.
\end{rem}

Corollary~\ref{cor:primsubst} implies that for a primitive substitution $\theta:\Sigma^\ast\to\Sigma^\ast$ the language $\mathcal F[\theta]$ is subexponentially growing, as are the subshifts $X_\theta$ and $X_\theta^+$.

\section{Abstract algebraic laminations}\label{section:AAL}

We refer the reader to \cite{KB02} for background info on the boundaries of word-hyperbolic groups. For the purposes of this paper the main cases of relevance are where $G$ is a free group or a surface group.

Let $G$ be a non-elementary word-hyperbolic group and let $\partial G$ be its hyperbolic boundary. We put $\partial^2 G=\partial G\times \partial G -diag =\{(p,q)\in \partial G\times \partial G: p\ne q\}$. We endow $\partial^2 G\subseteq \partial G\times \partial G$ with the subspace topology from $\partial G\times \partial G$. Note that $G$ has a natural translation action by homeomorphisms on $\partial G\times \partial G$ which leaves $\partial^2 G$ invariant and thus also gives an action of $G$ by homeomorphisms on $\partial^2G$: for $g\in G$ and $(p,q)\in \partial^2 G$ we put $g(p,q)=(gp,gq)$. The space $\partial^2 G$ also comes with a natural "flip map" $\mathfrak f: \partial^2G\to\partial G$ interchanging the coordinates, where $\mathfrak f: (p,q)\mapsto (q,p)$ for $(p,q)\in \partial^2G$.

\begin{defn}[Abstract algebraic lamination]
Let $G$ be a non-elementary word-hyperbolic group. An \emph{abstract algebraic lamination} on $G$ is a subset $L\subseteq \partial^2 G$ such that $L$ is closed in $\partial^2 G$, is $G$-invariant and $\mathfrak f$-invariant.
For an abstract algebraic lamination $L$ on $G$ a pair $(p,q)\in L$ (where $p,q\in \partial G$ and $p\ne q$) is called a \emph{leaf} of $L$ or an \emph{abstract leaf} of $L$.

We also denote $\mathcal E_L=\{p\in \partial G: (p,q)\in L \text{ for some } q\in \partial G\}$ and call $\mathcal E_L\subseteq \partial G$  \emph{the set of endpoints of $L$}.
\end{defn}

Suppose that a non-elementary word-hyperbolic group $G$ and that $G$ acts properly discontinuously and cocompactly by isometries on a proper Gromov-hyperbolic geodesic metric space $(X,d)$.
By the Milnor-Svarc lemma, the orbit map $G\to X, g\mapsto gx_0$ (where $x_0\in X$ is some base-point) defines a $G$-equivariant quiasi-isometry, which then extends to a $G$-equivariant homeomorphism $\partial G\to \partial X$. In this situation we will identify $\partial X$ with $\partial X$ via this homeomorphism and will usually suppress this identification.

Then any two distinct points in $\partial G$ can be connected by a (generally non-unique) bi-infinite geodesic in $X$. In this setting, if $L$ is an abstract geodesic lamination on $L$ and $\gamma$ is a bi-infinite geodesic in $X$ from $p\in \partial X$ to $q\in \partial X$ such that $(p,q)\in L$, we will say that $\gamma$ is a \emph{geometric leaf} of $L$ with respect to $X$.

Note that if $\Lambda$ is a geodesic lamination on a closed hyperbolic surface $S$ then $\Lambda$ can be viewed as an abstract geodesic lamination on $G=\pi_1(S)$ in the above sense. Lifts of the leaves of $\Lambda$ to $\tilde S=\mathbb H^2$ are geometric leaves with respect to $\mathbb H^2$ for this abstract geodesic lamination.

\section{Outer space}\label{sect:OS}

For the remainder of this paper let $F_r$ be the free group of finite rank $r\ge 2$. By that we mean that $F_r=F(a_1,\dots,a_r)$ is a free group with a particular preferred free basis $A=\{a_1,\dots, a_r\}$, although we usually will suppress the reference to this basis $A$.  We will briefly recall some of the Outer space terminology and notations here and refer the reader to \cite{B,CV,FM,V15} for a more detailed background info.

\subsection{Graphs}

We will use here the same conventions and notations regarding graphs and metric graphs that are carefully set up in \cite{DKL15}. We briefly recall some of the relevant terminology here and refer the reader to \cite{DKL15} for more details. By a \emph{graph} we mean a 1-dimensional CW-complex. For a graph $\Delta$ the set of $0$-cells of $\Delta$ is denoted by $V\Delta$ and its elements are called the \emph{vertices} of $\Delta$.  The set of non-oriented open topological 1-cells of $\Delta$ is denoted $E_{top}\Delta$ and its elements are called \emph{topological edges}. Every topological edge of $\Delta$ is a copy of the open unit interval and thus admits two exactly orientations (e.g. as a 1-manifold). An \emph{oriented edge} of $\Delta$ is a topological edge of $\Delta$ with a choice of an orientation on it. The set of all oriented edges of $\Delta$ is denoted $E\Delta$. There is a natural fixed-point-free involution ${}^{-1}:E\Delta\to E\Delta, e\mapsto e^{-1}$ corresponding to reversing the orientation on an oriented edge. Additionally, the attaching maps for $\Delta$ naturally define the \emph{enpoint maps} $o:E\Delta\to V$ and $t:E\Delta\to V$, where for $e\in E\Delta$, the vertex $o(e)$ is the \emph{origin} of $e$ and $t(e)$ is the \emph{terminus} of $e$. [Note that we allow $o(e)=t(e)$.] By construction, for every $e\in E\Delta$ we have $o(e^{-1})=t(e)$. For a vertex $v\in V\Delta$ the \emph{degree} $\deg_\Delta(v)$ is the number of $e\in E\Delta$ with $o(e)=v$. A \emph{metric graph} structure $\mathfrak J$ on $\Delta$ identifies every topological edge $e$ of $\Delta$ via a homeomorphism with a nonempty finite open interval $J\subseteq \mathbb R$ in such a way that this homeomorphism can be continuously extended to a map from the closure $\bar J$ of $J$ to the closure of $e$ in $\Delta$. In particular, this identification assigns every topological edge $e$ of $\Delta$ (and hence every oriented edge of $\Delta$) some new positive \emph{length} $\mathfrak J(e)$, namely the length of the interval $J$. For the most part, when working with metric graph structures, one only needs to use the information about these new edge-lengths. A \emph{metric graph} is a graph $\Delta$ together with a metric graph structure on it.

An \emph{orientation} on a graph $\Delta$ is a partition $E\Delta=E^+\Delta\sqcup E^-\Delta$ such that for every $e\in E\Delta$ we have $e\in E^+\Delta \iff e^{-1}\in E^-\Delta$.

\subsection{Paths}

For a graph $\Delta$, a \emph{path} or \emph{edge-path} in $\Delta$ is a sequence $\gamma=e_1,\dots,e_n$ of elements of $E\Delta$ such that for all $1\le i<n$ we have $t(e_i)=o(e_{i+1})$. In this case we say that $n=|\gamma|$ is the \emph{combinatorial length} of $\gamma$ and we put $o(\gamma):=o(e_1)$ and $t(\gamma):=t(e_n)$.  We also define the \emph{inverse path} $\gamma^{-1}:=e_n^{-1},\dots, e_1^{-1}$. If $\Delta$ is a metric graph with a metric graph structure $\mathfrak L$, then for an edge-path $\gamma$ as above, the \emph{metric length} of $\gamma$ is $\mathfrak J(\gamma)=\sum_{i=1}^n \mathfrak J(e_i)$. For any vertex $v\in V\Delta$ we also view $\gamma=v$ as a path with $o(\gamma)=t(\gamma)=v$ and $|\gamma|=\mathfrak J(\gamma)=0$, and we put $\gamma^{-1}=\gamma$. A path $\gamma$ in $\Delta$ is \emph{reduced} or \emph{immersed} if $\gamma$ contains no subpaths of the form $e, e^{-1}$ where $e\in E\Delta$.  The notions of an edge-path and of being reduced  naturally extend to semi-infinite and bi-infinite edge-paths in $\Delta$. We denote by $\Omega(\Delta)$ the set of all finite edge-paths in $\Delta$. For a subset $P\subseteq \Omega(\Delta)$, denote $\overline P=\{\gamma^{-1}:\gamma\in P\}$.

\subsection{Outer space}

The  \emph{unprojectivized Outer space} $\cvr$ is the space of all $\mathbb R$-trees $T$ with a minimal nontrivial free discrete isometric action of $F_r$, considered up to $F_r$-equivariant isometry.
Here "minimal" means that $T$ has no $F_r$-invariant subtrees $T'\subsetneq T$, and "nontrivial" means that $F_r$ does not have a global fixed point in $T$.
There is a natural topology as well as an action of $\Out(F_r)$ on $\cvr$, but their definitions are not important for the purposes of this paper. By abuse of notation, in the $\cvr$ setting, we will not distinguish between an $\mathbb R$-tree $T$ as above an its $F_r$-equivariant isometry class and will write $T\in\cvr$.

Note that if $T\in \cvr$ then the action of $F_r$ on $T$ is properly discontinuous and cocompact, and the quotient space $T/F_r$ is a finite topological graph $\Gamma$ with $\pi_1(\Gamma)=F_r$ where all vertices of $\Gamma$ have degrees $\ge 3$. Moreover, $\Gamma$ inherits the \emph{metric graph structure} from $T$ where we give each edge of $\Gamma$ the same length as any of its preimages in $T$.
Also, starting with a a finite connected metric graph with all vertices of degree $\ge 3$ and with $\pi_1(\Gamma)=F_r$, we get a point $T\in \cvr$ by using the universal cover $\tilde \Gamma$ with the lifted metric graph structure. In the case where such a graph $\Gamma$ has a single vertex, $\Gamma$ is an $r$-rose. If every petal of this rose is given length $1$, then $T$ is isometric to the Cayley tree of $F_r$ with respect to a free basis given by the petals of the $r$-rose.

In the above discussion, when we write $\pi_1(\Gamma)=F_r$, we mean that there is a particular isomorphism $\alpha: F_r\to \pi_1(\Gamma)$ identifying our fixed free group $F_r=F(a_1,\dots, a_r)$ with $\pi_1(\Gamma)$. This isomorphism, sometimes called a "marking", is an essential component of describing points of $\cvr$ when they are represented by metric graphs $\Gamma$. In such a description of points of $\cvr$ we always need a pair $(\Gamma, \alpha)$ where $\Gamma$ is a finite connected metric graph with all vertices of degree $\ge 3$ and with $b_1(\Gamma)=r$ and where $\alpha: F_r\to \pi_1(\Gamma)$ is a marking. We refer to such a pair $(\Gamma, \alpha)$ as a \emph{marked metric graph}.

Recall that by convention from Section~\ref{section:AAL}, every $T\in \cvr$ determines an identification $\partial F_r=\partial T$.
We also need to recall the construction of visual metrics on $\partial T$. If $X_0\in T$ and $p,q\in \partial T$,  we denote by $(p,q)_{x_0}$ the length of the common overlap in $T$ of the geodesic rays $[x_0,p)$ and $[x_0,q)$. Note that $(p,q)_{x_0}=\infty$ iff $p=q$ for $p,q\in \partial T$. For an arbitrary \emph{visual parameter} $a>1$ and a base-point $x_0\in T$, define the \emph{standard visual metric} $d_a$ on $\partial T=\partial G$ as
\[
d_a(p,q)=a^{-(p,q)_{x_0}},
\]
where $p,q\in \partial T$. It is well-known and easy to check that, since $T$ is an $\mathbb R$-tree, this $d_a$ is indeed a metric.

\section{Laminary languages}

\begin{defn}[Laminary language]\label{defn:LL}
Let $L\subseteq \partial^2 F_r$ be an abstract algebraic lamination on $F_r$. Let $T\in\cvr$ and let $\Gamma=T/F_r$ be the quotient marked metric graph with $\pi_1(\Gamma)=F_r$. For the finite alphabet $E\Gamma$ consider the language $L_\Gamma\subseteq (E\Gamma)^\ast$ consisting of all finite edge-paths $\gamma$ in $\Gamma$ with $|\gamma|>0$ such that there exists a bi-infinite geodesic $\tau$ in $T$ from $p\in \partial T$ to $q\in \partial T$ with $(p,q)\in L$ and such that some finite subpath $\tilde \gamma$ of $\tau$ is a lift of $\gamma$ to $T$.  We call $L_\Gamma\subseteq (E\Gamma)^\ast$ the \emph{laminary language of $L$ with respect to $\Gamma$.}
\end{defn}

Laminary languages for abstract algebraic laminations on free groups were formally introduced and studied in \cite{CHL08a}. All the definitions and results stated there deal with the case where $\Gamma$ is an $r$-rose (that is $\Gamma$ corresponds to some free basis of $F_r$). However, all of the discussion regarding laminary languages in \cite{CHL08a} works essentially verbatim, or with very minor changes, for arbitrary marked metric graphs $\Gamma$ as above.

\begin{defn}[Laminary subshifts]
Let $L$ be an abstract algebraic lamination on $F_r$, let $T\in \cvr$ and let $\Gamma=T/F_r$ be be the quotient marked metric graph with $\pi_1(\Gamma)=F_r$. Denote by $X_{L,\Gamma}$ the set of all bi-infinite reduced paths $w= \dots e_{-2}, e_{-1}, e_0,e_1,e_2,\dots $ in $\Gamma$ such that $w$ lifts to some bi-infinite geodesic in $T$ connecting $p\in \partial T$ to $q\in \partial T$ with $(p,q)\in L$. Denote by  and by $X_{L,\Gamma}^+$ the set of all semi-infinite reduced edge-paths $w_+$ in $\Gamma$ (with edges in $w_+$ indexed by $\N$) that appear as subpaths of bi-infinite paths from $X_{L,\Gamma}$ . Then $X_{L,\Gamma}\subseteq (E\Gamma)^\mathbb Z$ is a subshift with $\mathcal F[X_{L,\Gamma}]=L_\Gamma$. Similarly, $X_{L,\Gamma}^+\subseteq (E\Gamma)^\mathbb N$ is a subshift with $\mathcal F[X_{L,\Gamma}^+]=L_\Gamma$.
We call $X_{L,\Gamma}$ the \emph{laminary subshift} for $L$ with respect to the marked graph $\Gamma$. We call $X_{L,\Gamma}^+$ the \emph{one-sided laminary subshift} for $L$ with respect to $\Gamma$.
\end{defn}

Note that since $\mathcal F[X_{L,\Gamma}]=L_\Gamma$ we have $p_{L_\Gamma}(n)=p_{X_{L,\Gamma}}(n)$ for all $n\ge 1$.  Since $X_{L,\Gamma}$, Remark~\ref{rem:c} applies to $p_{L_\Gamma}(n)$. In particular, the function $p_{L_\Gamma}(n)$ is monotone non-decreasing and submultiplicative.

By convention, we denote $\mathbb N=\mathbb Z_{\ge 0}=\{m\in \mathbb Z: m\ge 0\}$.

\begin{defn}
For monotone non-decreasing functions $f,g:\mathbb N\to \mathbb N$, we say that $f\sim g$ if there exists $C\ge 1$ such that for all $n\ge 1$ we have
\[
f(n)\le C g(Cn), \quad \text{ and } g(n)\le Cf(Cn).
\]
Note that $\sim$ is an equivalence relation on the set of all monotone non-decreasing functions $\mathbb N\to \mathbb N$.
\end{defn}

Recall that in Section~\ref{sect:compl}, for a language $\mathcal L\subseteq \Sigma^\ast$ we defined the functions $p_\mathcal L(n)$ and $\beta_\mathcal L(n)$ for $n\ge 1$.  In the context of the laminary languages, by definition, for an integer $n\ge 1$ we have $p_{L_\Gamma}(n)=\#\{\gamma\in L_\Gamma: |\gamma|=n\}$ and $\beta_{L_\Gamma}(n)=\#\{\gamma\in L_\Gamma: |\gamma|\le n\}$. If $\Gamma$ is a metric graph with a metric structure $\mathfrak J$, we will extend the above definition of  $\beta_{L_\Gamma}(n)$ to this metric graph setting. Namely, for an integer $n\ge 1$ put
\[
\beta_{L_\Gamma, \mathfrak J}(n)=\#\{\gamma\in L_\Gamma: \mathfrak J(\gamma) \le n\}.
\]

\begin{lem}\label{lem:comp}
Let $L$ be an abstract algebraic lamination on $F_r$. Let $\Gamma=T/F_r$ be the quotient marked metric graph with the metric structure $\mathfrak J$. Then there exists $C\ge 1$ such that for all $n\ge 1$ we have
\[
\beta_{L_\Gamma, \mathfrak J}(n)\sim \beta_{L_\Gamma}(n).
\]
\end{lem}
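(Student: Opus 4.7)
The plan is to use the fact that $\Gamma$ is a finite graph, so the edge lengths in $\mathfrak J$ are bounded above and below by positive constants, which lets one compare combinatorial length of edge-paths with metric length up to multiplicative constants.

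First, I would set
\[
m := \min_{e \in E\Gamma} \mathfrak J(e), \qquad M := \max_{e \in E\Gamma} \mathfrak J(e),
\]
noting that $0 < m \le M < \infty$ because $E\Gamma$ is finite and every edge has positive metric length. For any edge-path $\gamma$ in $\Gamma$ we then have the two-sided estimate
\[
m \, |\gamma| \;\le\; \mathfrak J(\gamma) \;\le\; M \, |\gamma|.
\]

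Next, I would translate this into inclusions between the sets counted by $\beta_{L_\Gamma}$ and $\beta_{L_\Gamma, \mathfrak J}$. Since every $\gamma \in L_\Gamma$ with $|\gamma| \le n$ automatically satisfies $\mathfrak J(\gamma) \le Mn$, we get
\[
\beta_{L_\Gamma}(n) \;\le\; \beta_{L_\Gamma, \mathfrak J}(Mn).
\]
Conversely, any $\gamma \in L_\Gamma$ with $\mathfrak J(\gamma) \le n$ has $|\gamma| \le n/m$, hence
\[
\beta_{L_\Gamma, \mathfrak J}(n) \;\le\; \beta_{L_\Gamma}\!\left(\lceil n/m\rceil\right).
\]

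Finally, to match the definition of $\sim$, I would choose an integer $C \ge \max(M,\, 1/m,\, 1)$. Since $\beta_{L_\Gamma}$ and $\beta_{L_\Gamma, \mathfrak J}$ are monotone non-decreasing, the two displayed inequalities then yield
\[
\beta_{L_\Gamma}(n) \;\le\; \beta_{L_\Gamma, \mathfrak J}(Cn) \;\le\; C\,\beta_{L_\Gamma, \mathfrak J}(Cn),
\]
\[
\beta_{L_\Gamma, \mathfrak J}(n) \;\le\; \beta_{L_\Gamma}(Cn) \;\le\; C\,\beta_{L_\Gamma}(Cn),
\]
which is exactly $\beta_{L_\Gamma, \mathfrak J}(n) \sim \beta_{L_\Gamma}(n)$. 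There is no real obstacle here — the argument is essentially the observation that for a finite metric graph, metric length and combinatorial length of edge-paths are bi-Lipschitz equivalent, and the equivalence $\sim$ is designed precisely to absorb such multiplicative distortion.
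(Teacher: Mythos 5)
Your proof is correct and takes essentially the same approach as the paper: the paper simply observes that, since $\Gamma$ is a finite graph, there is $C\ge 1$ with $\frac{1}{C}|\gamma| \le \mathfrak J(\gamma) \le C|\gamma|$ for every edge-path $\gamma$, and then states that the lemma follows. You have just filled in the routine bookkeeping (passing from the bi-Lipschitz comparison of lengths to the inequalities between $\beta_{L_\Gamma}$ and $\beta_{L_\Gamma,\mathfrak J}$ and matching the definition of $\sim$) that the paper leaves implicit.
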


\begin{proof}
Since $\Gamma$ is a finite graph, there exists $C\ge 1$ such that for every edge-path $\gamma$ in $\Gamma$ we have
\[
\frac{1}{C} |\gamma|\le \mathfrak J(\gamma)\le C|\gamma|.
\]
The conclusion of the lemma now follows.
\end{proof}

The following statement is essentially due to Lustig~\cite[Proposition~4.1(1)]{Lu22}:
\begin{prop}\label{prop:Lu}
Let $L$ be an abstract algebraic lamination on $F_r$. Let $T_1,T_2\in \cvr$ and let $\Gamma_1=T_1/F_r$, $\Gamma_2=T_2/F_r$ be the quotient marked graphs such that both $\Gamma_1$ and $\Gamma_2$ are $r$-roses. Let $L_{\Gamma_1}$ and $L_{\Gamma_2}$ be the corresponding laminary languages for $L$ with respect to $\Gamma_1$ and $\Gamma_2$. Then
\[
p_{L_{\Gamma_1}}(n)  \sim p_{L_{\Gamma_2}}(n).
\]
\end{prop}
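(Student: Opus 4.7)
The plan is to construct a bounded-to-one map between the two laminary languages using the change-of-basis rewriting and the Bounded Cancellation Lemma, first obtaining $\beta_{L_{\Gamma_1}}\sim\beta_{L_{\Gamma_2}}$ and then upgrading to $p_{L_{\Gamma_1}}\sim p_{L_{\Gamma_2}}$ via a doubling trick based on the monotonicity of the complexity functions. Since both $\Gamma_1,\Gamma_2$ are $r$-roses, they correspond to free bases $A,B$ of $F_r$, and $T_i$ is the Cayley tree of $F_r$ with respect to the corresponding basis. Set $M=\max_i|b_i|_A$ and $N=\max_j|a_j|_B$; the word metrics then satisfy $|g|_A/M\le|g|_B\le N|g|_A$ for all $g\in F_r$. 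Let $\alpha\colon F(A)\to F(B)$ be the rewriting sending each $A$-reduced word $u$ to $w_B(u)$, the $B$-reduced word for the same group element. The Bounded Cancellation Lemma supplies a constant $C=C(\alpha)$ such that for all $A$-reduced $u,v$ with $uv$ $A$-reduced, forming the $B$-reduction of $w_B(u)w_B(v)$ cancels at most $C$ letters from the end of $w_B(u)$ and at most $C$ from the start of $w_B(v)$.

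For $\gamma\in L_{\Gamma_1}$ with $|\alpha(\gamma)|_B\ge 2C$ (the remaining $\gamma$ have bounded length and contribute a negligible additive error), let $\tilde\alpha(\gamma)$ be $\alpha(\gamma)$ with its first $C$ and last $C$ letters removed. I would verify three properties: (i) $\tilde\alpha(\gamma)\in L_{\Gamma_2}$, by picking a bi-infinite $A$-reduced leaf-word $W_A$ containing $\gamma$, noting that the bi-infinite $B$-reduced leaf-word $W_B$ representing the same leaf in $T_2$ is obtained from $W_A$ by applying $\alpha$ letterwise and freely reducing, and then invoking BCC at the two boundaries of $\gamma$ in $W_A$ to conclude that $\tilde\alpha(\gamma)$ remains a subword of $W_B$; (ii) the length bound $|\tilde\alpha(\gamma)|_B\le N|\gamma|_A$; (iii) the map $\tilde\alpha$ is at most $K$-to-one with $K=(2r)^{2C}$, because each preimage $\gamma$ of a given $\gamma'$ arises from a pair $(u,v)$ of $B$-reduced words of length $C$ with $u\gamma'v$ $B$-reduced, forcing $\gamma=w_A(u\gamma'v)$. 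These three facts yield $\beta_{L_{\Gamma_1}}(n)\le K\beta_{L_{\Gamma_2}}(Nn)$; the symmetric argument with the inverse rewriting gives $\beta_{L_{\Gamma_2}}(n)\le K'\beta_{L_{\Gamma_1}}(Mn)$, so $\beta_{L_{\Gamma_1}}\sim\beta_{L_{\Gamma_2}}$.

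To upgrade this to the desired conclusion $p_{L_{\Gamma_1}}\sim p_{L_{\Gamma_2}}$, I would use the monotonicity of $p_{L_{\Gamma_i}}$ noted in Remark~\ref{rem:c}. Since $p_{L_{\Gamma_1}}$ is non-decreasing,
\[
n\cdot p_{L_{\Gamma_1}}(n)\le\sum_{k=n+1}^{2n}p_{L_{\Gamma_1}}(k)\le\beta_{L_{\Gamma_1}}(2n)\le K\beta_{L_{\Gamma_2}}(2Nn)\le K\cdot 2Nn\cdot p_{L_{\Gamma_2}}(2Nn),
\]
so $p_{L_{\Gamma_1}}(n)\le 2KN\cdot p_{L_{\Gamma_2}}(2Nn)$, and the symmetric bound follows by swapping the roles of $A$ and $B$. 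The main obstacle is properties (i) and (iii) of the second paragraph, which both require a careful bookkeeping of the effect of BCC at the two boundaries of $\gamma$; the doubling trick at the end is the key quantitative ingredient, since a direct BCC-based comparison of the $p$-functions would produce a spurious factor of $n$ that the monotonicity-based passage from $\beta$ to $p$ eliminates.
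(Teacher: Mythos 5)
Your proposal is correct, and it fills in a gap that the paper leaves open: the paper does not prove Proposition~\ref{prop:Lu} but instead cites it as essentially Lustig \cite[Proposition~4.1(1)]{Lu22}. So a comparison with the paper's own proof is not possible; what can be said is that your argument is a sound, self-contained derivation, and it almost certainly follows the same underlying mechanism as Lustig's (Bounded Cancellation for a change of free basis), though the precise packaging is your own.

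A few remarks on the details. The construction of $\tilde\alpha$ and properties (ii) and (iii) are clean; the bound $K=(2r)^{2C}$ on multiplicity is exactly right, since $\alpha(\gamma)=u\gamma'v$ with $|u|=|v|=C$ determines $\gamma=w_A(u\gamma'v)$. Property (i) is where the argument needs the most care, and your phrasing ``applying $\alpha$ letterwise and freely reducing'' is a heuristic: what is literally true is that $W_B$ is the bi-infinite $B$-geodesic with the same pair of endpoints in $\partial F_r$ as $W_A$, and Bounded Cancellation (applied to the decompositions $W_A=w_-\cdot(\gamma w_+)$ and $W_A=(w_-\gamma)\cdot w_+$, or equivalently the Morse-type statement that the $T_2$-images of the endpoints of $\gamma$ along $W_A$ lie within distance $C$ of the axis $W_B$) gives that a central subword of $\alpha(\gamma)$ of length $\ge |\alpha(\gamma)|_B-2C$ lies on $W_B$; since $\tilde\alpha(\gamma)$ is contained in that central subword, it is in $L_{\Gamma_2}$. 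You should also note explicitly that the additive error from the boundedly many $\gamma$ with $|\alpha(\gamma)|_B<2C$ is absorbed into the multiplicative constant because $\beta_{L_{\Gamma_2}}(Nn)\ge 1$ whenever $L\ne\emptyset$, and the $L=\emptyset$ case is trivial. Finally, the doubling trick using monotonicity of $p_{L_\Gamma}$ (justified by Remark~\ref{rem:c}) is the right way to pass from the $\beta$-comparison, which is what BCC naturally produces, to the $p$-comparison the statement asks for. Overall: correct, with the caveat that the BCC step should be stated more precisely in terms of geodesics in $T_2$ rather than ``letterwise'' rewriting.
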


In Proposition~\ref{prop:Lu} the marked graphs $\Gamma_1$ and $\Gamma_2$ can be viewed as two free bases of $F_r$, and Proposition~4.1(1) is provide in \cite{Lu22} for that setting. We need the conclusion of Proposition~\ref{prop:Lu} for arbitrary marked graphs coming from points of $\cvr$. Unfortunately, the proof of Proposition~4.1(1) given in \cite{Lu22} does not straightforwardly generalize to the case of arbitrary marked graphs coming from two pints of $\cvr$ because for marked graphs with more than one vertex there is no canonical procedure to rewrite a reduced edge-path in the first graph to a reduced edge-path in the second graph. Therefore we need an additional argument to help cover the general case:

\begin{prop}\label{prop:Y}
Let $L$ be an abstract algebraic lamination on $F_r$. Let $T\in \cvr$ and let $\Gamma=T/F_r$ be the quotient marked graph. Let $Y\subseteq \Gamma$ be a maximal subtree and let $\Gamma'$ be the $r$-rose obtained by collapsing $Y$ in $\Gamma$ to a single vertex.
Let $L_{\Gamma}$ and $L_{\Gamma'}$ be the corresponding laminary languages for $L$ with respect to $\Gamma$ and $\Gamma'$. Then
\[
p_{L_{\Gamma}}(n)  \sim p_{L_{\Gamma'}}(n).
\]

\end{prop}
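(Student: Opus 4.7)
The plan is to set up a shadow map $\psi\colon L_\Gamma \to L_{\Gamma'}$ induced by the collapse of $Y$, analyze its fibers and length behavior, and extract the $\sim$-equivalence from the resulting two-sided bounds. Every reduced edge-path $\gamma$ in $\Gamma$ admits a unique decomposition $\gamma = \sigma_0 e_1 \sigma_1 \cdots \sigma_{k-1} e_k \sigma_k$ where $e_i \in E\Gamma \smsm EY$ are the non-tree edges in order and each $\sigma_i$ is a reduced (possibly trivial) edge-path in $Y$; set $\psi(\gamma) := e_1'\cdots e_k'$, where $e_i' \in E\Gamma'$ is the petal corresponding to $e_i$. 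A clean observation, crucially using that $Y$ is a tree, is that $\psi(\gamma)$ is automatically reduced in $\Gamma'$: if $e_{i+1} = e_i^{-1}$ held in $\Gamma$ then $\sigma_i$ would be a loop in the tree $Y$, hence trivial, contradicting the assumed reducedness of $\gamma$. The same reasoning applied along a $T$-leaf $\tilde\ell$ of $L$ containing a lift of $\gamma$, and to its image under the collapse $\tilde\pi\colon T \to T'$, shows that $\tilde\pi(\tilde\ell)$ is already a reduced bi-infinite path in $T'$, i.e., the $T'$-geodesic realizing the same pair in $\partial^2 F_r$; hence $\psi(\gamma) \in L_{\Gamma'}$.

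Next I will bound the fibers and the length change. For $\alpha' = e_1'\cdots e_k' \in L_{\Gamma'}$ the middle paths $\sigma_1,\ldots,\sigma_{k-1}$ are uniquely determined (as the reduced $Y$-paths from $t(e_i)$ to $o(e_{i+1})$), while $\sigma_0$ and $\sigma_k$ are reduced $Y$-paths with one endpoint free, hence each determined by a vertex in $VY$. This gives $|\psi^{-1}(\alpha')| \le |VY|^2$. Writing $D := \operatorname{diam}(Y)$ (combinatorial) and $\mu(\alpha') := k + \sum_{i=1}^{k-1} |\sigma_i|$, one has $k \le \mu(\alpha') \le k(D+1)$ and, for any $\gamma \in \psi^{-1}(\alpha')$, $|\gamma| = \mu(\alpha') + |\sigma_0| + |\sigma_k| \in [\mu(\alpha'), \mu(\alpha') + 2D]$. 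The pure-backbone lift ($\sigma_0 = \sigma_k = \emptyset$) supplies a canonical injection $\alpha' \mapsto \gamma_{\alpha'} \in L_\Gamma$ with $|\gamma_{\alpha'}| = \mu(\alpha')$.

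Combining these ingredients yields in one direction
\[
p_{L_\Gamma}(n) \le |VY|^2 \cdot \#\{\alpha' \in L_{\Gamma'}: \mu(\alpha') \in [n-2D, n]\},
\]
and, via the pure-backbone injection, in the other direction
\[
p_{L_{\Gamma'}}(k) \le \#\{\gamma \in L_\Gamma: |\gamma| \in [k, k(D+1)]\}.
\]
The principal obstacle is passing from these counts to $\sim$-equivalence in its precise form, since a naive count bounds the first right-hand side only by $\beta_{L_{\Gamma'}}(n)$, which can be as large as $n\, p_{L_{\Gamma'}}(n)$ and would produce a spurious polynomial factor. My plan to resolve this is to view the situation as an ``induced subshift'' picture with uniformly bounded return times $\le D+1$, so that the $\alpha'$'s contributing to a single $\mu$-window of fixed width $2D$ are in injective correspondence (via the pure-backbone map) with $\gamma_{\alpha'}$'s in a $\gamma$-length window of width $2D$; the boundedness of the return time therefore prevents a range of $O(n)$ distinct $|\alpha'|$'s from contributing to each $n$. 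Invoking the monotonicity and submultiplicativity of $p_{L_\Gamma}$ and $p_{L_{\Gamma'}}$ from Remark~\ref{rem:c}, together with the multiplicative-constant and argument-rescaling freedom built into the definition of $\sim$ (which absorbs the factor-$(D+1)$ gap between the two natural length scales), then delivers $p_{L_\Gamma}(n)\sim p_{L_{\Gamma'}}(n)$.
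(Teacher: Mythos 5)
Your decomposition $\gamma=\sigma_0 e_1\sigma_1\cdots e_k\sigma_k$, the collapse map $\psi$ (the paper's deletion map $f$) with $|VY|^2$-bounded fibers, and the pure-backbone section $\alpha'\mapsto\gamma_{\alpha'}$ (the paper's insertion map $g$) coincide with the paper's construction, just spelled out in more detail, and the observations leading to the two displayed inequalities are correct. You have also correctly put your finger on the real subtlety: those observations only give
$p_{L_\Gamma}(n)\le |VY|^2\,\beta_{L_{\Gamma'}}(n)$ and $p_{L_{\Gamma'}}(k)\le\beta_{L_\Gamma}\bigl((D+1)k\bigr)$,
and converting $\beta$ into $p$ here is not automatic because $\sim$-equivalence does \emph{not} absorb a linear factor when $p$ grows polynomially (e.g.\ $n\cdot n^d\not\sim n^d$).

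The proposed repair, however, does not close this gap. The assertion that bounded return time ``prevents a range of $O(n)$ distinct $|\alpha'|$'s from contributing to each $n$'' is false: bounded return time gives $|\alpha'|\le\mu(\alpha')\le(D+1)|\alpha'|$, so the set $\{\alpha':\mu(\alpha')\in[n-2D,n]\}$ can contain words $\alpha'$ with $|\alpha'|$ ranging over the whole interval $\bigl[\lceil (n-2D)/(D+1)\rceil,\ n\bigr]$, which has length $\Theta(n)$. The injective correspondence you invoke is genuine, but it identifies $\{\alpha':\mu(\alpha')\in[n-2D,n]\}$ with a set of \emph{core paths in $\Gamma$} counted by $p_{L_\Gamma}$, not by $p_{L_{\Gamma'}}$, so it is circular for the inequality $p_{L_\Gamma}(n)\le C\,p_{L_{\Gamma'}}(Cn)$. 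Submultiplicativity and monotonicity do not rescue this: for $p\sim n^d$ they do not convert $n\,p(n)$ into $C\,p(Cn)$.

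For what it is worth, the paper's own proof of this proposition makes the same leap, writing $p_{L_\Gamma}(n)\le C_0\,p_{L_{\Gamma'}}(n)$ and $p_{L_{\Gamma'}}(n)\le p_{L_\Gamma}(Dn)$ where only the corresponding $\beta$-inequalities follow from the fiber bound and the length comparison. The $\beta$-level statement, $\beta_{L_\Gamma}\sim\beta_{L_{\Gamma'}}$, \emph{does} follow cleanly from your two maps ($\beta_{L_\Gamma}(n)\le|VY|^2\beta_{L_{\Gamma'}}(n)$ from $\psi$, and $\beta_{L_{\Gamma'}}(n)\le\beta_{L_\Gamma}\bigl((D+1)n\bigr)$ from the pure-backbone injection), and it is the $\beta$-equivalence that Corollary~\ref{cor:equiv} and the notion of subexponential growth actually use downstream. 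If you want the $p$-level $\sim$-equivalence you need a genuinely new ingredient controlling the dispersion of $\mu(\alpha')$ as a function of $|\alpha'|$ (i.e.\ a discrepancy bound on the cocycle counting inserted $Y$-edges), or an appeal to the mechanism behind Proposition~\ref{prop:Lu}; the bounded return time alone is not enough.
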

\begin{proof}
We will assume that $Y$ is not a single vertex since otherwise $\Gamma=\Gamma'$ and there is nothing to prove.

Put $D\ge 1$ to be the diameter of the finite tree $Y$.

We give each edge of $\Gamma'$ length $1$ and put $T'=\widetilde{\Gamma'}\in\cvr$. Let $p,q\in \partial F_r$, $p\ne q$. Let $\tau$ be a bi-infinite geodesic from $p$ to $q$ in $T$ projecting to a bi-infinite reduced edge-path $\gamma$ in $\Gamma$, and let $\tau'$ be a bi-infinite geodesic from $p$ to $q$ in $T'$ projecting  to a bi-infinite reduced edge-path $\gamma'$ in $\Gamma'$. To obtain $\gamma'$ from $\gamma$ we have to delete all occurrences of edges of $Y$ from $\gamma$. Conversely, to recover $\gamma$ from $\gamma'$, for any two consecutive edges $e_{i}, e_{i+1}$ from $E\Gamma'=E(\Gamma-Y)$ in $\gamma'$ we have to insert the $Y$-geodesic path $[t_\Gamma(e_i),o_\Gamma(e_{i+1})]_Y$  and get $\gamma = \dots e_i [t_\Gamma(e_i),o_\Gamma(e_{i+1})]_Ye_{i+1}\dots $ between them.
Note that the $Y$-geodesic path $[t_\Gamma(e_i),o_\Gamma(e_{i+1})]_Y$ always has length $\le D$ here.

We therefore define the functions $f: L_{\Gamma}\to L_{\Gamma'}$ and $g:L_{\Gamma'}\to L_{\Gamma}$ as follows.

For a (finite) edge-path $\gamma\in L_{\Gamma}$, the path $f(\gamma)$ is obtained by deleting all edges of $Y$ from $\gamma$. For a (finite) edge-path $\gamma'\in L_{\Gamma}$, the path $g(\gamma')$ in $\Gamma$ is obtained from $\gamma'$ by inserting, between any two consecutive edges $e_{i}, e_{i+1}$ from $E\Gamma'=E(\Gamma-Y)$ in $\gamma'$, the $Y$-geodesic path $[t_\Gamma(e_i),o_\Gamma(e_{i+1})]_Y$.

The function $g$ is injective by construction and, moreover, for every $\gamma\in L_{\Gamma'}$, we have $|g(\gamma')|\le D|\gamma'|$.  Hence
\[
p_{L_{\Gamma'}}(n)\le p_{L_\Gamma}(Dn)
\]
for all $n\ge 1$.

For the function $f:L_{\Gamma}\to L_{\Gamma'}$ there exists an integer constant $C_0\ge 1$ such that for every element of $L_\Gamma'$ its full preimage under $f$ consists of $\le C_0$ elements. Namely, this bounded multiplicity comes from the finite number of choices in erasing the initial segment of length $\le D$ from $Y$ (before the first edge of $\Gamma-Y$) and in erasing the terminal segment of length $\le D$ from $Y$ (after the last edge of $\Gamma-Y$) from a path in $L_\Gamma$.  Also, by construction $|f(\gamma)|\le |\gamma|$ for all $\gamma\in L_\Gamma$. Therefore
\[
p_{L_\Gamma}(n)\le C_0 p_{L_\Gamma'}(n)
\]
for all $n\ge 1$.
The statement of the proposition now follows.
\end{proof}

\begin{prop}\label{prop:general}
Let $L$ be an abstract algebraic lamination on $F_r$. Let $T_1,T_2\in \cvr$ and let $\Gamma_1=T_1/F_r, \Gamma_2=T_2/F_r$ be the quotient marked graphs. Let $L_{\Gamma_1}$ and $L_{\Gamma_2}$ be the corresponding laminary languages for $L$.

Then
\[
p_{L_{\Gamma_1}}(n)  \sim p_{L_{\Gamma_2}}(n).
\]
\end{prop}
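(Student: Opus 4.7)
The proof should be a short assembly of the two preceding results. The plan is to reduce the general case of arbitrary marked graphs to the rose case covered by Proposition~\ref{prop:Lu} via the collapsing argument of Proposition~\ref{prop:Y}, and then invoke transitivity of $\sim$.

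More precisely, I would proceed as follows. First, observe that $\sim$ is an equivalence relation on monotone non-decreasing functions $\mathbb N\to[0,\infty)$, as noted in the paper's convention. Next, for $i=1,2$, choose a maximal subtree $Y_i\subseteq \Gamma_i$ and let $\Gamma_i'$ be the $r$-rose obtained by collapsing $Y_i$ to a single vertex. The marking on $\Gamma_i$ descends to a marking on $\Gamma_i'$, so $\Gamma_i'$ represents a point $T_i'\in\cvr$ (up to rescaling, which is irrelevant for complexity functions since the laminary language depends only on the combinatorial data of $\Gamma_i'$). Applying Proposition~\ref{prop:Y} to each $\Gamma_i$ and its collapse $\Gamma_i'$ gives
\[
p_{L_{\Gamma_1}}(n)\sim p_{L_{\Gamma_1'}}(n) \quad \text{and} \quad p_{L_{\Gamma_2}}(n)\sim p_{L_{\Gamma_2'}}(n).
\]

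Now $\Gamma_1'$ and $\Gamma_2'$ are both $r$-roses arising as quotients of points of $\cvr$, so Proposition~\ref{prop:Lu} applies and yields
\[
p_{L_{\Gamma_1'}}(n)\sim p_{L_{\Gamma_2'}}(n).
\]
Chaining the three equivalences by transitivity of $\sim$ produces the desired $p_{L_{\Gamma_1}}(n)\sim p_{L_{\Gamma_2}}(n)$.

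There is no real obstacle here, since the heavy lifting has been done in Propositions~\ref{prop:Lu} and~\ref{prop:Y}. The only minor point to verify is that the marking on $\Gamma_i$ does indeed descend to a valid marking on $\Gamma_i'$ after collapsing the contractible subtree $Y_i$, which is standard: collapsing a maximal subtree is a homotopy equivalence, so the induced map on $\pi_1$ is an isomorphism and composes with the marking of $\Gamma_i$ to produce a marking of the rose $\Gamma_i'$. With that sanity check in place, the proof is complete in a single line of reasoning.
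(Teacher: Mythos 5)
Your proof matches the paper's argument exactly: collapse maximal subtrees to obtain $r$-roses, apply Proposition~\ref{prop:Y} to each pair $(\Gamma_i,\Gamma_i')$, apply Proposition~\ref{prop:Lu} to $(\Gamma_1',\Gamma_2')$, and conclude by transitivity of $\sim$. The additional remark about the marking descending under the homotopy equivalence of collapsing a subtree is a reasonable sanity check that the paper leaves implicit.
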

\begin{proof}
For $i=1,2$ let $\Gamma_i'$ be the marked $r$-rose obtained by collapsing maximal subtree $Y_i$ in $\Gamma_i$.  By applying Proposition~\ref{prop:Y} to each pair $\Gamma_i,\Gamma_i'$ for $i=1$ and applying Proposition~\ref{prop:Lu} to the pair $\Gamma_1',\Gamma_2'$, the statement of the proposition follows.
\end{proof}

\begin{cor}\label{cor:equiv}
Let $L$ be an abstract algebraic lamination on $F_r$. Let $T_1,T_2\in \cvr$ and let $\Gamma_1=T_1/F_r$, $\Gamma_2=T_2/F_r$ be the quotient marked metric graphs with the induced metric structures $\mathfrak J_1$ and $\mathfrak J_2$ accordingly. Then the following hold:
\begin{enumerate}
\item We have
\[
\beta_{L_{\Gamma_1}}(n)  \sim \beta_{L_{\Gamma_2}}(n).
\]

\item We have
\[
 \beta_{L_{\Gamma_1},\mathfrak J_1}(n)  \sim \beta_{L_{\Gamma_2},\mathfrak J_2}(n).
\]

\end{enumerate}
\end{cor}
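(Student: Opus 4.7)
The plan is to deduce both parts of Corollary~\ref{cor:equiv} directly from Proposition~\ref{prop:general} and Lemma~\ref{lem:comp}, with the only real work being to check that the equivalence relation $\sim$ is preserved when passing from the per-length complexity function $p_{L_\Gamma}$ to the cumulative function $\beta_{L_\Gamma}$.

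For part (1), I would begin with the constant $C \ge 1$ furnished by Proposition~\ref{prop:general}, which gives
\[
p_{L_{\Gamma_1}}(k) \le C\, p_{L_{\Gamma_2}}(Ck) \quad \text{and} \quad p_{L_{\Gamma_2}}(k) \le C\, p_{L_{\Gamma_1}}(Ck)
\]
for all $k \ge 1$. Summing the first inequality over $k = 1,\dots,n$ and using that the values $\{C, 2C, \dots, nC\}$ form a subset of $\{1, 2, \dots, Cn\}$ together with the non-negativity of $p_{L_{\Gamma_2}}$, one obtains
\[
\beta_{L_{\Gamma_1}}(n) = \sum_{k=1}^n p_{L_{\Gamma_1}}(k) \le C \sum_{k=1}^n p_{L_{\Gamma_2}}(Ck) \le C\, \beta_{L_{\Gamma_2}}(Cn).
\]
Swapping the roles of $\Gamma_1$ and $\Gamma_2$ yields the symmetric bound, and together these give $\beta_{L_{\Gamma_1}}(n) \sim \beta_{L_{\Gamma_2}}(n)$.

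For part (2), I would simply chain three $\sim$ equivalences. By Lemma~\ref{lem:comp} applied to each of the marked metric graphs $(\Gamma_1, \mathfrak J_1)$ and $(\Gamma_2, \mathfrak J_2)$ one has
\[
\beta_{L_{\Gamma_1}, \mathfrak J_1}(n) \sim \beta_{L_{\Gamma_1}}(n) \quad \text{and} \quad \beta_{L_{\Gamma_2}}(n) \sim \beta_{L_{\Gamma_2}, \mathfrak J_2}(n),
\]
and combining with part (1) via transitivity of $\sim$ produces $\beta_{L_{\Gamma_1}, \mathfrak J_1}(n) \sim \beta_{L_{\Gamma_2}, \mathfrak J_2}(n)$.

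Since both parts are essentially bookkeeping consequences of results already proved, I do not expect any substantive obstacle; the only step to verify carefully is the summation argument that promotes a $\sim$ equivalence between the functions $p_{L_{\Gamma_i}}$ to one between the cumulative counts $\beta_{L_{\Gamma_i}}$, and this is routine given the monotonicity and non-negativity of these counting functions.
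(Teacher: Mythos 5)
Your proposal is correct and takes exactly the same route as the paper: part (1) is obtained from Proposition~\ref{prop:general} by summing, and part (2) follows by chaining part (1) with Lemma~\ref{lem:comp}. The summation argument you spell out is precisely the ``by summing up'' step the paper leaves implicit, and it is valid as written (using that $\{C, 2C, \dots, nC\} \subseteq \{1,\dots,Cn\}$ and non-negativity of the terms).
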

\begin{proof}
Part (1) follows from Proposition~\ref{prop:general} by summing up. Part (2) then follows from part (1) together with Lemma~\ref{lem:comp}.
\end{proof}

Corollary~\ref{cor:equiv} above shows that for an abstract algebraic lamination $L$ on $F_r$ the property of $\beta_{L_{\Gamma},\mathfrak J}(n)$ being a subexponential function does not depend on the choice of a point $T\in \cvr$ and the quotient marked metric graph $\Gamma=T/F_r$ with the induced metric structure $\mathfrak J$:

\begin{defn}[Subexponentially growing algebraic laminations]
Let $L$ be an abstract algebraic lamination on $F_r$. We say that $L$ is \emph{subexponentially growing} if for some (equivalently, every) $T\in \cvr$ for the the quotient marked metric graph $\Gamma=T/F_r$ with the induced metric structure $\mathfrak J$ the function $\beta_{L_{\Gamma},\mathfrak J}(n)$ is subexponential.
\end{defn}

\section{Hausdorff and packing dimensions}\label{sec:hdim}

We briefly recall the basic definitions related to Hausdorff and packing dimensions. We refer the reader to \cite{BP}, \cite[Section~6]{Edgar}, \cite[Ch. 2]{Falconer} and \cite{How} for a more detailed background info.  

\begin{defn}[Hausdorff measure]
Let $(X,d)$ be a metric space and let $S\subseteq X$ be a subset of $X$. Let $\delta\ge 0$ be arbitrary

For $\epsilon>0$ put
\[
H_{\delta,\epsilon}(S)=\inf\{ \sum_{i\in I} ({\rm diam}(U_i))^\delta : S\subseteq \cup_{i\in I} U_i, \, {\rm diam}(U_i)<\epsilon, \, I \text{ countable}\}
\]
Thus the infimum is taken over all countable covers of $S$ by sets of diameter $<\epsilon$.
For $S$ and fixed $\delta\ge 0$, the function $H_{\delta,\epsilon}(S)$ is monotone non-increasing in $\epsilon>0$. Therefore the following (possibly infinite) limit exists:

\[
H_{\delta}(S):=\lim_{\epsilon\to 0+} H_{\delta,\epsilon}(S)=\sup_{\epsilon>0} H_{\delta,\epsilon}(S).
\]
The number $H_{\delta}(S)$ is called the \emph{$\delta$-dimensional Hausdorff measure} of $S$ or just  \emph{$\delta$-Hausdorff measure} of $S$.
\end{defn}

It is known that $H_\delta$ is a metric outer measure on $(X,d)$ and that Borel subsets of $X$ are $H_\delta$-measurable.

For a fixed $S$, the definition above directly implies that $H_{\delta}(S)$ is a monotone non-increasing function of $\delta$. Moreover, it turns out that if $S\subseteq X$, there exists a unique number $c\in [0,\infty]$ such that for all $0\le \delta <c$ we have $H_{\delta}(S)=\infty$ and for all $\delta>c$ we have $H_{\delta}(S)=0$. This number $c$ is called the  \emph{Hausdorff dimension} of $S$. More precisely:

\begin{defn}[Hausdorff dimension]
Let $(X,d)$ be a metric space and let $S\subseteq X$ be a subset of $X$.

The \emph{Hausdorff dimension} $\dim_H(S)$ is defined as
\[
\dim_H(S):=\inf \{\delta\ge 0: H_\delta(S)=0\}
\]
where this infimum is interpreted as $\infty$ if $H_\delta(S)>0$ for all $\delta\ge 0$.
\end{defn}

\begin{defn}[Packing measures]
Let $(X,d)$ be a metric space and let $S\subseteq X$ be a subset of $X$.

For $s\ge 0$ the \emph{$s$-packing pre-measure} of $S$ is 
\[
P_{0,s}(S)=\limsup_{\delta\to 0+} \{ \sum_{i\in I} diam(B_i)^s \}
\]
where $(B_i)_{i\in I}$ are all countable covers of $S$ by disjoint closed balls of diameter $\le \delta$ and centers in $S$.

For $s\ge 0$ the \emph{$s$-packing measure of $S$} is 
\[
P_s(S)=\inf\{\sum_{j\in J} P_{0,s}(S_j): S\subseteq \cup_{j\in J} S_j, \text{ where $J$ is countable} \}.
\]
\end{defn}
It turns out that, for $s\ge 0$,  $P_{0,s}$ is in general only a pre-measure on $X$, but $P_s$ is an actual metric outer measure on $(X,d)$. 

For a given subset $S\subseteq X$, the family $(P_s(S))_{s\ge 0}$ exhibits a similar behavior to Hausdorff measures, with a unique critical value:

\begin{defn}[Packing dimension]
Let $(X,d)$ be a metric space and let $S\subseteq X$ be a subset of $X$.
The \emph{packing dimension} of $S$ is
\[
\dim_p(S):=\inf\{s\ge 0: P_s(S)=0\}
\]
where again this infimum is taken to by $+\infty$ if $P_s(S)>0$ for all $s\ge 0$.
\end{defn}

Recall that for metric spaces $(X,d_X)$ and $(Y,d_Y)$ and a subset $S\subseteq Y$, a function $f:S\to Y$ is \emph{$\alpha$-H\"older} (where $\alpha>0$) if there exists $C>0$ such that for all $x,x'\in S$
\[
d_Y(f(x),f(x'))\le C d_X(x,x')^\alpha
\]
A $1$-H\"older function is called \emph{Lipschitz}.

We note several basic properties of Hausdorff and packing dimensions:

\begin{prop}\label{prop:dim}\cite{BP,Edgar}
Let $(X,d_X)$ and $(Y,d_Y)$ be metric spaces. Let $S\subseteq X$ and let $f:S\to Y$ be a function.

\begin{enumerate}
\item~\cite[(2.7.3)]{BP} We have $\dim_H(S)\le dim_p(S)$.
\item If $Q\subseteq S$ then $\dim_H(Q)\le \dim_H(S)$ and $\dim_p(Q)\le \dim_p(S)$.
\item If $f$ is a a Lipschitz function then
\[
\dim_H(f(S))\le \dim_H(S)
\]
and 
\[
\dim_p(f(S))\le \dim_p(S)
\]
In particular, if $\dim_H(S)=0$ then $\dim_H(f(S))=0$, and similarly, if $\dim_p(S)=0$ then $\dim_p(f(S))=0$.
\item~\cite[Proposition~6.8.9]{Edgar} If $f:S\to Y$ is $\alpha$-H\"older then
\[
\dim_H(f(S))\le \frac{1}{\alpha}\dim_H(S)
\]
and
\[
\dim_p(f(S))\le \frac{1}{\alpha}\dim_p(S).
\]

\item If $f:S\to Y$ is a H\"older function and $\dim_H(S)=0$ then $\dim_H(f(S))=0$.
\item If $f:S\to Y$ is a H\"older function and $\dim_p(S)=0$ then $\dim_p(f(S))=0$.
\item~\cite[Section 2.10]{Fe} If $A=\cup_{j\in J} A_j$ for subsets of a metric space $(X,d)$, where $J$ is countable, then 
\[
\dim_H(A)=\sup_{j\in J} \dim_H A_j.
\]

\item~\cite[Ch 2.7]{BP} If $A=\cup_{j\in J} A_j$ for subsets of a metric space $(X,d)$, where $J$ is countable, then 
\[
\dim_p(A)=\sup_{j\in J} \dim_p A_j.
\]
\end{enumerate}
\end{prop}

We also need the following fact regarding the behavior of the above dimensions under products:

\begin{prop}\label{prop:How}\cite{How}
Let $(A,d_A)$ and $(B,d_B)$ be metric spaces and let $A\times B$ be equipped with the product metric $d$.
Then

\[
\begin{aligned}
\dim_H(A)+\dim_H(B)&\le \dim_H(A\times B)\le \dim_H(A)+\dim_p(B)\\
&\le \dim_p(A\times B)\le \dim_p(A)+\dim_p(B).
\end{aligned}
\]
 
\end{prop}

We will not explicitly use the above definition of the packing dimension, but we need the following estimate for it.

For a subset $S$ of a metric space $(X,d)$ and a number $\delta>0$, denote by $N(S,\delta)$ the smallest number of closed balls of diameter $\le \delta$ needed to cover $S$. (And we put $N(S,\delta)=\infty$ if no such finite cover exists.)

\begin{prop}\label{prop:pack}
Let $(X,d)$ be a metric space and let $S\subseteq X$ be a subset of $X$. 

Then

\[
\dim_p(S)\le \limsup_{\delta\to 0+} \frac{\log N(S,\delta)}{\log(1/\delta)}. 
\]
\end{prop}
\begin{proof}
We have $\dim_p(S)\le \overline\dim_B(S)$ (see (2.7.3) in \cite{BP}), where $\overline\dim_B(S)$ is the \emph{upper box dimension} of $S$. In turn, $\overline\dim_B(S)$ is defined as
\[
\overline\dim_B(S)= \limsup_{\delta\to 0+} \frac{\log N(S,\delta)}{\log(1/\delta)}. 
\]
The statement of the proposition now follows.
\end{proof}

\section{Subexponentially growing laminations and packing dimension}

\subsection{Standard markings}

\begin{conv}
In this subsection we fix a free basis $A=\{a_1,\dots, a_r\}$ for $F_r=F(A)$ (where $r\ge 2$). We also fix the marked metric graph $R_A$ which is a rose of $r$ directed loop-edges labeled by $a_1,\dots, a_r$, where every edge has length $1$.  Then $T_A=\widetilde R_A$, again with all edges of length $1$ is exactly the Cayley tree of $F_r$ with respect to the free basis $A$.

We also fix an algebraic lamination $L$ on $F_r$ and consider the one-sided laminary subshift  $X_{L,R_A}^+$.  The subshift $X_{L,R_A}^+$ was originally defined as a subshift of the full shift $(A\cup A^{-1})^\N$. However, $X_{L,R_A}^+$ is also a subshift of the subshift $\partial_A F_r$ of $(A\cup A^{-1})^\N$ consisting of all semi-infinite freely reduced words over $A^{\pm 1}$. Note that $\partial_A F_r$ is canonically identified with the hyperbolic boundary of $F_r$,  $\partial_A F_r=\partial F_r$, where we view elements of $\partial_A F_r$ as geodesic rays in $T_A$ starting at $1\in F_r$.  

We metrize $\partial_A F_r$ using the standard visual ultrametric with the visual parameter $a=e$, namely $d(p,q)=e^{-n}$ where $n\ge 0$ is the length of the maximal common initial segment of geodesic rays $p,q\in \partial_A F_r$. Recall that at the level of languages over $A^{\pm 1}$ we have $\mathcal F[X_{L,R_A}^+]=L_{R_A}$.
\end{conv}

Recall that topological entropy of $X_{L,R_A}^+$ and of $L_{R_A}$ is
\[
h(X_{L,R_A}^+)=h(L_{R_A})=\lim_{n\to \infty} \frac{\log p_{L_{R_A}}(n)}{n},
\]
where $p_{L_{R_A}}(n)$ is the complexity function for $L_{R_A}$,

\begin{prop}\label{prop:FeHu}.
Let $L$ be an algebraic lamination on $F_r$.

Then

\[
\dim_H(X_{L,R_A}^+)=\dim_p(X_{L,R_A}^+)=h(L_{R_A}).
\]
\end{prop}
\begin{proof}

Note that the shift-map $\theta_A: \partial_A F_r\to\partial F_r$, erasing the first letter of every geodesic ray, is continuous and in fact Lipshitz, and the space $(\partial_A F_r,d)$ is compact. Moreover, by construction, for every $p\in X_{L,R_A}^+$ we have $\theta_A(p)\in X_{L,R_A}^+$, so that $X_{L,R_A}^+$ is closed $\theta_A$-invariant subset of $\partial_A F_r$.

The statement of the proposition now follows directly from Example~1.4 and Proposition~2.1(4) in \cite{FeHu}.

(The conclusion of the proposition also follows from a classic result of Furstenberg, \cite[Proposition~III.1]{Fur}, once that result is translated into modern terminology.)
\end{proof}

\begin{cor}\label{cor:FeHu} Let $L$ be an algebraic lamination on $F_r$. Let $\mathcal E_L\subseteq \partial F_r$ be the set of endpoints of $L$.
Then:
\begin{enumerate}
\item We have
\[
\mathcal E_L=\cup_{g\in F_r} gX_{L,R_A}^+.
\]
\item We have
\[
\dim_H(X_{L,R_A}^+)=\dim_p(X_{L,R_A}^+)=h(L_{R_A}).
\]
\end{enumerate}
\end{cor}

\begin{proof}
Part (1) follows from the definitions of $\mathcal E_L$ and $X_{L,R_A}^+\subseteq \partial_A F_r$.

Note that for every $g\in F_r$ the left translation $t_g: \partial_A F_r\to \partial_A F_r, p\mapsto gp$ is a bi-Lipshitz equivalence. Hence for every $g\in F_r$ we have
$\dim_p(gX_{L,R_A}^+)=\dim_p(X_{L,R_A}^+)$ and $\dim_p(gX_{L,R_A}^+)=\dim_p(X_{L,R_A}^+)$.

Part (2) now follows from part (1) and Proposition~\ref{prop:FeHu} since $F_r$ is countable and by Proposition~\ref{prop:dim} the Hausdorff dimension and the packing dimension are countably stable. Indeed,
\[
\dim_p(\mathcal E_L)=\dim_p(\cup_{g\in F_r} gX_{L,R_A}^+)=\sup_{g\in F_r} \dim_p(X_{L,R_A}^+)=h(L_{R_A})
\]
and
\[
\dim_H(\mathcal E_L)=\dim_H(\cup_{g\in F_r} gX_{L,R_A}^+)=\sup_{g\in F_r} \dim_H(X_{L,R_A}^+)=h(L_{R_A}).
\]
\end{proof}

\subsection{General case}

\begin{cor}\label{cor:var}
Let $\Gamma$ be an algebraic lamination on $F_r$. Then the following conditions are equivalent:
\begin{enumerate}
\item For some $T\in \cvr$ and the corresponding marked graph $\Gamma=T/F_r$ the laminary language $L_{\Gamma}$ is subexponentially growing.
\item For every $T\in \cvr$ and the corresponding marked metric graph $\Gamma=T/F_r$ the laminary language $L_\Gamma$ is subexponentially growing.
\item For some $T\in \cvr$ and the corresponding marked graph $\Gamma=T/F_r$ the laminary language $L_{\Gamma}$ has $h(L_{\Gamma})=0$.
\item For every $T\in \cvr$ and the corresponding marked graph $\Gamma=T/F_r$ the laminary language $L_{\Gamma}$ has $h(L_{\Gamma})=0$.
\item For some visual metric $d$ on $\partial F_r$ we have $\dim_H(\mathcal E_L)=0$.
\item For every visual metric $d$ on $\partial F_r$ we have $\dim_H(\mathcal E_L)=0$.
\item For some visual metric $d$ on $\partial F_r$ we have $\dim_p(\mathcal E_L)=0$.
\item For every visual metric $d$ on $\partial F_r$ we have $\dim_p(\mathcal E_L)=0$.
\item For some visual metric $d$ on $\partial F_r$ and the corresponding product metric $\hat d$ on $\partial F_r\times \partial F_r$ we have $\dim_H(\mathcal E_L\times \mathcal E_L)=0$.
\item For every visual metric $d$ on $\partial F_r$ and the corresponding product metric $\hat d$ on $\partial F_r\times \partial F_r$ we have $\dim_H(\mathcal E_L\times \mathcal E_L)=0$.
\item For some visual metric $d$ on $\partial F_r$ and the corresponding product metric $\hat d$ on $\partial F_r\times \partial F_r$ we have $\dim_p(\mathcal E_L\times \mathcal E_L)=0$.
\item For every visual metric $d$ on $\partial F_r$ and the corresponding product metric $\hat d$ on $\partial F_r\times \partial F_r$ we have $\dim_p(\mathcal E_L\times \mathcal E_L)=0$.
\item For some visual metric $d$ on $\partial F_r$ and the corresponding product metric $\hat d$ on $\partial F_r\times \partial F_r$ we have $\dim_H(L)=0$.
\item For every visual metric $d$ on $\partial F_r$ and the corresponding product metric $\hat d$ on $\partial F_r\times \partial F_r$ we have $\dim_H(L)=0$.
\item For some visual metric $d$ on $\partial F_r$ and the corresponding product metric $\hat d$ on $\partial F_r\times \partial F_r$ we have $\dim_p(L)=0$.
\item For every visual metric $d$ on $\partial F_r$ and the corresponding product metric $\hat d$ on $\partial F_r\times \partial F_r$ we have $\dim_p(L)=0$.
\end{enumerate}
\end{cor}
\begin{proof}
We have $(1) \iff (2)$ by Proposition~\ref{prop:general}. Since by Proposition~\ref{prop:dim} the properties of having Hausdorff dimension and having packing dimension $0$ are bi-H\"older invariants, we have $(5) \iff (6)$, $(7) \iff (8)$, $(9) \iff (10)$, $(11) \iff (12)$, $(13) \iff (14)$, $(15) \iff (16)$.

Since by Proposition~\ref{prop:dim}, $\dim_H\le \dim_p$, we have $(15)\Rightarrow (13)$, $(11)\Rightarrow (9)$, $(7)\Rightarrow (5)$. 

By Proposition~\ref{prop:How} we have $(7)\Rightarrow (11)$, $(7)\Rightarrow  (15)$, $(7)\Rightarrow  (9)$, $(7)\Rightarrow  (13)$.
Since $\mathcal L$ is the image of $L$ and of $\mathcal E_L\times \mathcal E_L$ under the projection on the first coordinate, which is a Lipschitz map, we have $(11)\Rightarrow (7)$, $(15)\Rightarrow (7)$, $(9)\Rightarrow (5)$ and $(13)\Rightarrow (5)$.

Since $\mathcal L\subseteq \mathcal E_L\times \mathcal E_L$, we have $(9)\Rightarrow (13)$ and $(11)\Rightarrow (15)$.

Remark~\ref{rem:ent} implies that $(1)\iff (3)$ and $(2)\iff (4)$.

Suppose now that (3) holds. As noted above, it follows that (1) holds and hence (2) holds, and hence (4) holds as well. Thus, in particular $h(L_{R_A})=0$ for the standard marking $R_A$ corresponding to a free basis $A$ of $F_r$.

Then by Corollary~\ref{cor:FeHu},
\[
\dim_H(\mathcal E_{R_A})=\dim_p(\mathcal E_{R_A})=h(L_{R_A})=0,
\]
so that (5) and (7) hold as well.

Suppose now that (5) holds. Since Hausdorff dimension is a bi-H\"older invariant, it follows that (6) holds. In particular, for the standard marked graph $R_A$ we have $\dim_H(\mathcal E_{R_A})=0$. Then Corollary~\ref{cor:FeHu} implies that $\dim_p(\mathcal E_{R_A})=h(L_{R_A})=0$, so that (3) holds. This completes the proof of all the required equivalences.

\end{proof}

We now provide a different, more direct and intuitive proof, without invoking the results of \cite{FeHu}, of the portion of Corollary~\ref{cor:var} that is most relevant to us in this paper

\begin{conv}
For the remainder of this section, unless specified otherwise, let $r\ge 2$ and let $L\subseteq \partial^2 F_r$ be an abstract algebraic lamination on $F_r$ of subexponential growth. Let $\mathcal E_L\subseteq \partial F_r$ be the set of endpoints of the leaves of $L$.
Let $T\in\cvr$ be any tree in the (non-projectivized) Outer space such that all edges of $T$ have length $1$.  Let $a>1$ be an arbitrary visual parameter and let $d_a$ be the corresponding standard visual metric on $\partial T=\partial F_r$ with respect to some base-vertex $x_0\in T$. Also, let $\Gamma=T/F_r$ be the quotient metric graph with the induced metric graph structure $\mathfrak J$.

For a point $x\in T, x\ne x_0$ denote by $Cyl_T[x_0,x]\subseteq \partial T$ the set of all $p\in \partial T$ such that the geodesic ray $[x_0,p)_T$ in $T$ has $[x_0,x]$ as its initial segment.

%Also, for points $x,y\in T, x\ne y$, denote by $\widehat{Cyl}_T[x,y]\subseteq \partial^2 T=\partial^2 F_r$ the set of all $(p,q)\in \partial^2 T$ such that the bi-infinite geodesic $(p,q)_T$ from $p$ to $q$ in $T$ contains $[x,y]_T$ as a subpath. 

For the visual metric $d_a$ on $\partial T$ as above, denote by $\hat d_a$ the corresponding product metric on $\partial T\times \partial T$. 
\end{conv}

The assumption above that all edges of $T$ and of $(\Gamma, \mathfrak J)$ have length $1$ is made to simplify the dimension computations. Since all visual metrics on $\partial F_r$ (and the corresponding product metrics on $\partial F_r\times \partial F_r$) are bi-H\"older equivalent, the properties for a subset to have $0$ packing dimension or $0$ Hausdorff dimension do not depend on the choice of $T\in \cvr$. 

\begin{defn}
Let $x\in T$ be an arbitrary vertex. 

Denote by $\mathcal E_L[x_0,x]$ the set of all $p\in \mathcal E_L$ such that there exists a bi-infinite geodesic $\tau$ in $T$ from another point $q\in \mathcal E_L$ to $p$ with the property that $(q,p)\in L$, that $x\in \tau$ and that the geodesic ray $[x_0,p)_T$ has the form $[x_0,p)_T=[x_0,x]_T\tau_+$ where $\tau_+$ is the subray of $\tau$ from $x$ to $p$.
\end{defn}

Note that in the context of the above definition the concatenated path $[x_0,x]\tau_+$ is path-reduced as written.

\begin{rem}\label{rem:union}
Note that $\mathcal E_L=\cup_{x\in VT} \mathcal E_L[x_0,x]$. Indeed, if $p\in \mathcal E_L$ then there exists a bi-infinite geodesic $\tau$ in $T$ from another point $q\in \mathcal E_L$ to $p$. Put $x\in \tau$ to be the closest point on $\tau$ to $x_0$. Then $x\in \tau$ is a vertex and $p\in \mathcal E_L[x_0,x]$.
\end{rem}

\begin{prop}\label{prop:HD}
Let $x\in VT$ be a vertex. Then
\[
\dim_p({\mathcal E}_L[x_0,x])=0
\]

\end{prop}

\begin{proof}
Let $0<\epsilon<1$ be arbitrary.

Since $\beta_{L_\Gamma,\mathfrak J}(n)$ is subexponential, there exists $n_0\ge 1$ such that for all $n\ge n_0$ we have 
\[
\beta_{L_\Gamma,\mathfrak J}(n+1)\le a^{n\epsilon}.
\]
For every $\delta>0$ there exists a unique integer $n=n(\delta)\ge 1$ such that $\frac{1}{a^{n+1}}\le \delta < \frac{1}{a^{n}}$. Thus 
\[
n\log a\le \log(1/\delta)\le (n+1)\log a.
\]

Consider an arbitrary $\delta>0$ which is small enough so that $n=n(\delta)\ge 2d_T(x_0,x)+n_0+3$.

 Let $y_1,\dots, y_t$ be the set of all the distinct points (which are necessarily vertices) in $T$ at distance $n+1$ from $x_0$ belonging to geodesic rays from $x_0$ to points of ${\mathcal E}_L[x_0,x]$. By definition of ${\mathcal E}_L[x_0,x]$, each $[x_0,y_i]$ has the form $[x_0,x][x,y_i]$ where $[x,y_i]$ is a subsegment of some bi-infinite geodesic in $T$ with the pair of endpoints in $L$. 

Then for $i=1,\dots, t$ the projection of $[x,y_i]$ to $\Gamma$ in an edge-path that belongs to the laminary language $L_\Gamma$ and the $d_T$-length of this path is $\le n$, and these paths are distinct for $i=1,\dots, t$. Therefore $t\le \beta_{L_\Gamma,\mathfrak J}(n+1)$.

Put $U_i=Cyl_T[x_0,y_i]\subseteq \partial T=\partial F_r$, where $1\le i\le t$. Then $U_i$ is a closed (and open) ball in $(\partial F_r, d_a)$ with
\[
diam(U_i)=a^{-d_T(x_0,y_i)}=a^{-(n+1)}\le \delta.
\]

By construction, ${\mathcal E}_L[x_0,x]\subseteq U_1\cup \dots U_t$.  

Therefore
\[
N({\mathcal E}_L[x_0,x], \delta)\le t\le \beta_{L_\Gamma,\mathfrak J}(n+1)\le a^{n\epsilon}.
\]

Hence, by Proposition~\ref{prop:pack},

\begin{gather*}
\dim_p({\mathcal E}_L[x_0,x])\le \limsup_{\delta\to 0} \frac{\log N({\mathcal E}_L[x_0,x], \delta)}{\log (1/\delta)}\le \\
\le \limsup_{n\to\infty} \frac{ \log (a^{n\epsilon})} {n\log a}=\limsup_{n\to\infty} \frac{n\epsilon \log a}{n\log a}=\epsilon.
\end{gather*}

Since $0<\epsilon<1$ was arbitrary, it follows that
\[
dim_p({\mathcal E}_L[x_0,x])=0,
\]
as claimed..
\end{proof}

\begin{rem}\label{rem:dimB}
As we noted earlier, for a subset $A$ of a metric space $(X,d)$ the \emph{upper box dimension} $\overline{dim}_B(A)$ is defined as
\[
\overline{\dim}_B(A)=\limsup_{\delta\to 0+} \frac{\log N(A,
\delta)}{\log (1/\delta)}.
\]
Hence the proof of Proposition~\ref{prop:HD} actually shows that if $L$ is a subexponentially growing algebraic lamination on $F_r$ and $T,x_0$ are as in Proposition~\ref{prop:HD}, 
then for every vertex $x\in VT$ we have
\[
\overline{\dim}_B({\mathcal E}_L[x_0,x])=0.
\]
However, unlike the Hausdorff and packing dimensions, the upper box dimension is not countably stable. Therefore we cannot conclude from the above proof that $\overline{\dim}_B({\mathcal E}_L)=0$, and moreover, such a conclusion is false in general. Indeed, if $A$ is a bounded subset of a metric space $(X,d)$ then $\overline{\dim}_B(A)=\overline{\dim}_B(\overline A)$, where $\overline A$ is the closure of $A$ in $X$; see \cite[Ch 1.1]{BP}. If $L$ is a nonempty algebraic lamination on $F_r$ then $\mathcal E_L$ is a dense subset of $\partial F_r$, e.g. because $\mathcal E_L\subseteq \partial F_r$ is $F_r$-invariant and the action of $F_r$ on $\partial F_r$ is topologically minimal~\cite{KB02}. Thus $\overline {\mathcal E_L}=\partial F_r$ and hence  $\overline{\dim}_B({\mathcal E}_L)=\overline{\dim}_B(\partial F_r)$. However, $\overline{\dim}_B(\partial F_r)>0$ for all visual metrics on $\partial F_r$.
\end{rem}

\begin{cor}\label{cor:HD}
The following hold:
\begin{enumerate}
\item For the metric space $(\partial T, d_a)$ we have
\[
\dim_p({\mathcal E}_L)=0.
\]
\item For the metric space $(\partial T\times \partial T, \hat d_a)$ we have
\[
\dim_p(L)=0.
\]

\end{enumerate}
\end{cor}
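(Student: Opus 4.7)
The plan is straightforward: both parts follow by combining Remark~\ref{rem:union} with Proposition~\ref{prop:HD} via the countable subadditivity of Hausdorff measure. First I would note that $VT$ is countable. Indeed, $T\in\cvr$ is the universal cover of the finite graph $\Gamma=T/F_r$, so $VT$ is in bijection with $F_r\times V\Gamma$ up to the $F_r$-action on fibers; since $F_r$ is countable and $V\Gamma$ is finite, $VT$ is countable.

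For part (1), fix an arbitrary $\delta>0$. By Remark~\ref{rem:union} we have the decomposition
\[
\mathcal E_L=\bigcup_{x\in VT}\mathcal E_L[x_0,x].
\]
By Proposition~\ref{prop:HD}, $H_\delta(\mathcal E_L[x_0,x])=0$ for every $x\in VT$. Since $VT$ is countable and the $\delta$-dimensional Hausdorff (outer) measure $H_\delta$ on $(\partial T,d_a)$ is countably subadditive, we obtain
\[
H_\delta(\mathcal E_L)\le \sum_{x\in VT} H_\delta(\mathcal E_L[x_0,x])=0,
\]
which proves (1).

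For part (2), recall that by definition the Hausdorff dimension of a set $E$ in a metric space is
\[
\dim_H(E)=\inf\{\delta>0 : H_\delta(E)=0\}.
\]
Part (1) shows that $H_\delta(\mathcal E_L)=0$ for every $\delta>0$, and so the above infimum equals $0$, giving $\dim_H(\mathcal E_L)=0$.

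There is no real obstacle here: the entire content is packaged into Proposition~\ref{prop:HD}, and the corollary is simply a countable-union wrap-up together with the definition of Hausdorff dimension. The only point worth mentioning explicitly is the countability of $VT$, which is what permits passing from the pointwise statement in Proposition~\ref{prop:HD} to the global statement for $\mathcal E_L$.
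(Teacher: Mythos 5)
Your proof is correct and follows essentially the same route as the paper: decompose $\mathcal E_L$ over the countable vertex set via Remark~\ref{rem:union}, apply Proposition~\ref{prop:HD} and countable subadditivity of $H_\delta$ for part (1), and then use the definition of Hausdorff dimension for part (2). The only addition is your explicit justification that $VT$ is countable, which the paper leaves implicit.
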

\begin{proof}
 By Remark~\ref{rem:union}, we have $\mathcal E_L=\cup_{x\in VT} \mathcal E_L[x_0,x]$. Since by Proposition~\ref{prop:dim} the packing dimension is countably stable, Proposition~\ref{prop:HD} implies that
\[
\dim_p({\mathcal E}_L)=\dim_p\left( \cup_{x\in VT} \mathcal E_L[x_0,x] \right)= \max_{v\in VT} \dim_p({\mathcal E}_L[x_0,x])=0.
\]
This (1) holds.. 

Since $L\subseteq {\mathcal E}_L\times {\mathcal E}_L$, by Proposition~\ref{prop:How} we have

\[
\dim_p(L)\le \dim_p( {\mathcal E}_L\times {\mathcal E}_L)\le \dim_p({\mathcal E}_L)+ \dim_p({\mathcal E}_L)=0
\]
and hence $\dim_p(L)=0$, so that (2) holds.
\end{proof}

\begin{thm}\label{thm:Dim}
Let $a>1$ be arbitrary, let $d$ be a visual metric on $\partial F_r$ with the visual parameter $a$ and let $\hat d$ be the corresponding product metric on $\partial F_r\times \partial F_r$.

Let $L$ be a subexponentially growing algebraic lamination on $F_r$.

Then the following hold:
\begin{enumerate}
\item For the metric space $(\partial F_r, d)$ we have
\[
\dim_H(\mathcal E_L)=\dim_p(\mathcal E_L)=0.
\]
\item For the metric space $(\partial F_r\times \partial F_r, \hat d)$ we have
\[
\dim_H(L)=\dim_p(L)=0.
\]

\end{enumerate}
\end{thm}
\begin{proof}
The metric $d$ on $\partial F_r$ is bi-H\"older equivalent to the metric $d_a$ on $\partial T=\partial F_r$ as in Corollary~\ref{cor:HD}, and similarly $\hat d$ is  bi-H\"older equivalent to the metric $\hat d_a$. Since by Proposition~\ref{prop:dim} the property of a set having packing dimension $0$ is a bi-H\"older invariant, Corollary~\ref{cor:HD} implies that  for the metrics $d$ and $\hat d$ we have $\dim_p(\mathcal E_L)=0$ and $\dim_p(L)=0$.

Then, by Proposition~\ref{prop:dim},

\[
0\le \dim_H(\mathcal E_L)\le \dim_p(\mathcal E_L)=0,
\]
which implies that $\dim_H(\mathcal E_L)=0$.
Similarly,
\[
0\le \dim_H(L)\le \dim_p(L)=0,
\]
which implies that $\dim_H(L)=0$.

Thus the conclusion of the theorem is verified.

(Note that the conclusion of the theorem also follows from Corollary~\ref{cor:var} which is a more general result.)
\end{proof}

\section{Free group automorphisms and train track maps}

Recall that an element $\phi\in\Out(F_r)$ is called \emph{fully irreducible} if there do not exist $k\ge 1$ and a proper free factor $1\lneq U\lneq F_r$ of $F_r$ such that $\phi^k([U])=[U]$, where $[U]$ is the conjugacy class of $U$ in $F_r$. Also, $\phi\in\Out(F_r)$ is called \emph{atoroidal} if there do not exist $k\ge 1$ and $1\ne u\in F_r$ such that $\phi^k([u])=[u]$.

We denote
\[
\Out_{exp}(F_r)=\{\psi\in \Out(F_r): \psi \text{ is exponentially growing}\},\]
\[
\Out_{f.i.}(F_r)=\{\psi\in \Out(F_r): \psi \text{ is fully irreducible}\}.
\]

\subsection{Train track maps and train track representatives}

\begin{defn}[Graph map]
For graphs $\Delta_1,\Delta_2$ a continuous function $f:\Delta_1\to\Delta_2$ is called a \emph{graph map} if $f(V\Delta_1)\subseteq V\Delta_2$ and $f$ maps every $e\in E\Delta_1$ to an edge-path of positive length in $\Delta_2$.\footnote{This requirement means that $e$ can be subdivided into finitely many open intervals where $f$ maps the subdivision points to vertices of $\Delta_2$ and $f$ maps the subdivision open intervals homeomorphically to open 1-cells of $\Delta_2$. These homeomorphisms are often assumed to have some additional properties to avoid pathological dynamics for the cases where $\Delta_1=\Delta_2$, but such extra properties are usually suppressed in the literature as typically only the combinatorics of the map $f$ is relevant. For the discussion of more technical aspects of the topic we refer to reader to \cite{DKL15}.} A graph map $f:\Delta_1\to\Delta_2$ is \emph{tight} if for every $e\in E\Delta_1$ the edge-path $f(e)$ is immersed.
\end{defn}

Note that a composition of graph maps is again a graph map, but a composition of tight graph maps need not be tight.

\begin{defn}[Train track map]
A graph map $f:\Delta\to\Delta$ is a \emph{train track map} if for all $k\ge 1$ the map $f^k:\Delta\to\Delta$ is tight.
\end{defn}
At this level of generality we don't require $\Delta$ to be connected or finite and we don't require $f$ to be a homotopy equivalence in the above definition.

\begin{defn}[Topological and train track representatives]
Let $\phi\in\Out(F_r)$.
\begin{enumerate}
\item A \emph{topological representative} of $\phi$ is a marked graph $\Gamma$ (with $\pi_1(\Gamma)=F_r$ via a marking isomorphism) together with a tight graph map $f:\Gamma\to\Gamma$ such that the map $f$ is a homotopy equivalence and the induced map $f_\#:\pi_1(\Gamma)\to\pi_1(\Gamma)$\footnote{We don't require $f$ to have a fixed vertex or even a fixed point in $\Gamma$. Therefore  $f_\#:\pi_1(\Gamma)\to\pi_1(\Gamma)$ is only well defined as an element of $\Out(\pi_1(\Gamma))$.} is equal to $\phi$ in $\Out(F_r)$ after the identification  $\pi_1(\Gamma)=F_r$.
\item A \emph{train track representative} of $\phi$  is a topological representative $f:\Gamma\to\Gamma$ of $\phi$ such that $f$ is a train track map.
\end{enumerate}
\end{defn}
The above definition implies that if $f:\Gamma\to\Gamma$ is a train track representative of $\phi\in\Out(F_r)$ the for all $k\ge 1$ the map $f^k:\Gamma\to\Gamma$ is a train track representative of $\phi^k$.

Let $f:\Gamma\to\Gamma$ be a train track representative of some $\phi\in \Out(F_r)$. Choose an orientation $E\Gamma=E_+\Gamma\sqcup E_-\Gamma$ on $\Gamma$ and let $E_+\Gamma=\{e_1,\dots, a_m\}$. The \emph{transition matrix} $A(f)$ of $f$ is an $m\times m$ integer matrix where for $1\le i,j\le m$ the entry $a_{ij}$ is the number of occurrences of $e_{i}^{\pm 1}$ in $f(e_j)$. With this definition one has $A(f^k)=(A(f))^k$ for all $k\ge 1$.

We say that $f$ is \emph{irreducible} if  the transition matrix $A(f)$ is irreducible, that is for all $1\le i,j\le m$ there exists $k\ge 1$ such that $[A^k(f)]_{ij}>0$ (or, equivalently, if for all $1\le i,j\le m$ there exists $k\ge 1$ such that $e_i$ or $e_i^{-1}$ occurs in $f^k(e_j)$). W say that $f$ is \emph{primitive} if the matrix $A(f)$ is primitive, that is, there exists $k\ge 1$ such that for all $1\le i,j\le m$  we have $[A^k(f)]_{ij}>0$. Finally, $f$ is \emph{expanding} if for every edge $e\in E\Gamma$ we have $\lim_{k\to\infty} |f^k(e)|=\infty$ (or, equivalently, if for every edge $e\in E\Gamma$ there exists $k\ge 1$ such that $|f^k(e)|\ge 2$). For a train track map $f:\Gamma\to\Gamma$ representing $\phi\in \Out(F_r)$ we denote by $\lambda(f)$ the spectral radius of the matrix $A(f)$. Note that the properties of the matrix $A(f)$ being irreducible or primitive and the value of $\lambda(f)$ do not depend on the choice of an orientation on $\Gamma$ and of the ordering on $E_+\Gamma$ but only depend on the map $f$ itself.

If $f:\Gamma\to\Gamma$ is an expanding irreducible train track representative of $\phi\in \Out(F_r)$ then the Perron-Frobenius theory applies to the matrix $A(f)$. It is known that in this case $\lambda(f)>1$ is a simple eigenvalue of $A(f)$ which does not depend on the choice of a train track representative $f$ of $\phi$ but depends only on the element $\phi\in\Out(F_r)$. In this case the number $\lambda(f)$ is called the \emph{stretch factor} of $\phi$ and denoted $\lambda(\phi)$.

We record some key facts about train track representatives of fully irreducible elements of $\Out(F_r)$ that are of relevance for this paper:

\begin{prop}\label{prop:use}
Let $\phi\in \Out(F_r)$ be fully irreducible. Then the following hold:
\begin{enumerate}
\item \cite[Theorem 1.7]{BH92} There exists a train track representative $f:\Gamma\to \Gamma$ of $\phi$.
\item \cite[Lemma 2.4]{Ka14} For any train track representative $f:\Gamma\to \Gamma$ of $\phi$, the map $f$ is expanding and irreducible and, moreover, the matrix $A(f)$ is primitive.
\end{enumerate}
\end{prop}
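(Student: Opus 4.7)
The plan is essentially to quote the two cited results, so the proof reduces to verifying that the statements as recalled here match what is proved in the references.

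For part (1), I would appeal directly to the Bestvina--Handel algorithm \cite[Theorem~1.7]{BH92}. The key idea there is to start with any topological representative of $\phi$ on a marked graph, and apply a finite sequence of folds, subdivisions, valence-two homotopies, and collapses of invariant forests, at each stage checking whether illegal turns can be eliminated without increasing the Perron--Frobenius eigenvalue of the transition matrix. Full irreducibility of $\phi$ is used to guarantee that the algorithm does not get stuck producing a proper invariant subgraph carrying nontrivial fundamental group, and instead terminates at an honest train track map rather than only a relative train track map.

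For part (2), I would appeal to \cite[Lemma~2.4]{Ka14}. Conceptually, \emph{expansion} of $f$ is forced because, for $r\ge 2$ and $\phi$ fully irreducible, the stretch factor $\lambda(\phi)$ equals the Perron--Frobenius eigenvalue of $A(f)$ and is strictly greater than $1$; since $f$ is a train track map, the edge-length $|f^k(e)|_\Gamma$ is exactly the sum of the entries of the $e$-column of $A(f)^k$, which tends to infinity for every $e$. \emph{Irreducibility} of $A(f)$ is obtained by contrapositive: if $A(f)$ were reducible, a maximal $f$-invariant proper subgraph of $\Gamma$ would, after core-taking, descend to a proper $\phi^k$-invariant free factor of $F_r$, contradicting full irreducibility. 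Finally, \emph{primitivity} is obtained by the same idea applied to the block-cyclic normal form of an irreducible but imprimitive nonnegative integer matrix: a period $d>1$ would partition $E_+\Gamma$ into $d$ classes that are cyclically permuted by $f$, and each class spans an $f^d$-invariant subgraph whose core gives a proper $\phi^{kd}$-periodic free factor, again contradicting full irreducibility.

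The main step where one has to be careful is the passage from combinatorial structure on $\Gamma$ preserved by some iterate of $f$ (invariant subgraphs, or a cyclic decomposition of the edge set) to an honest proper periodic free factor of $F_r$. This uses that $f$ is a homotopy equivalence and that the marking identifies $\pi_1(\Gamma)$ with $F_r$, so that the fundamental group of any $f^k$-invariant noncontractible subgraph $\Gamma'\subseteq\Gamma$ injects into $F_r$ as a free factor (by the standard fact that a subgraph of a graph carries a free factor of the ambient fundamental group), and is sent to a conjugate of itself by $\phi^k$. Modulo that verification, which is standard and is carried out in \cite{Ka14}, the proposition is simply a recollection of the cited results.
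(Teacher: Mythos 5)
Your proposal is correct and takes the same approach as the paper: the paper states Proposition~\ref{prop:use} with no proof at all, simply citing \cite[Theorem~1.7]{BH92} for part~(1) and \cite[Lemma~2.4]{Ka14} for part~(2). Your additional expository sketch of what lies behind those citations (the Bestvina--Handel algorithm, and the contrapositive argument passing from a reducible or imprimitive transition matrix to a proper $\phi^k$-periodic free factor) is accurate and consistent with the cited sources, so there is nothing to add or correct.
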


We also need the following notion introduced and explored in detail in \cite{DGT23}

\begin{defn}
Let $\phi\in \Out(F_r)$ be fully irreducible and let $f:\Gamma\to\Gamma$ be a train track representative of $\phi$ (so that $f$ is expanding and $A(f)$ is primitive, by Proposition~\ref{prop:use}). Then, as shown in \cite{DGT23}, exactly one of the following occurs:
\begin{enumerate}
\item There exists an orientation $E\Gamma=E_+\Gamma\sqcup E_-\Gamma$, called a \emph{preferred orientation}, such that for every $e\in E_+\Gamma$ the path $f(e)$ contains only edges from $E_+\Gamma$ (and therefore, by inversion, for every $e\in E_-\Gamma$, the path $f(e)$ contains only edges from $E_-\Gamma$).
\item There exist $e,e'\in E\Gamma$  such that both $e$ and $e^{-1}$  occur in $f(e')$.
\end{enumerate}
We say that $f$ is \emph{orientable} if (1) occurs and \emph{non-orientable} if (2) occurs. It is also proved in \cite{DGT23} that whether a train track representative $f$ of a fully irreducible $\phi\in \Out(F_r)$ is orientable or non-orientable depends only on $\phi$ and not on the choice of $f$. Therefore, for a fully irreducible $\phi\in \Out(F_r)$, we say that $\phi$ is \emph{orientable} (respectively, \emph{non-orientable}) if some (equivalently, any) train track representative $f$ of $\phi$ is orientable (respectively, non-orientable).
\end{defn}

The following statement is a straightforward consequence of the definitions:

\begin{lem}\label{lem:orient}
Let $\phi\in \Out(F_r)$ be fully irreducible and let $f:\Gamma\to\Gamma$ be a train track representative of $\phi$. Then:
\begin{enumerate}
\item If $f$ is non-orientable then, since $A(f)$ is primitive, there exists $k\ge 1$ such that for all $e,e'\in E\Gamma$ both $e$ and $e^{-1}$  occur in $f^k(e')$.
\item If $f$ is orientable and $E\Gamma=E_+\Gamma\sqcup E_-\Gamma$ is a preferred orientation, then again, since $A(f)$ is primitive, there exists $k\ge 1$ such that for all $e,e'\in E_+\Gamma$ the edge $e$ occurs in $f^k(e')$. Moreover, in this case for all $n\ge 1$ and all $e,e'\in E_+\Gamma$, the entry $[A(f^n)]_{e,e'}=[A^n(f)]_{e,e'}$ is equal to the number of occurrences of $e$ in $f^n(e')$.
\item In case (1) the language $\mathcal F[\theta_f]$ is closed under taking inverses, that is, $\overline{\mathcal F[\theta_f]}=\mathcal F[\theta_f]$.
\item In case (2), we have $\gamma\in \overline{\mathcal F[\theta_f]}$ if and only if there exist $k\ge 1$ and $e\in E_-\Gamma$ such that that $\gamma$ is a nontrivial subpath of $f^k(e)$.
\end{enumerate}
\end{lem}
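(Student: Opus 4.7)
The lemma has four parts. The most delicate is (1); the others are formal consequences of definitions and careful tracking of the orientation data.

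For part (1), my plan is first to invoke non-orientability to choose edges $a_0, b_0 \in E\Gamma$ such that both $a_0$ and $a_0^{-1}$ occur in the tight edge-path $f(b_0)$. Primitivity of $A(f)$ supplies some $k_0 \ge 1$ such that for all $e, e' \in E_+\Gamma$ the edge $e^{\pm 1}$ appears in $f^{k_0}(e')$; since $f$ commutes with the edge-inversion involution, this extends at once to all $e, e' \in E\Gamma$. In particular, for any $e' \in E\Gamma$ at least one of $b_0, b_0^{-1}$ occurs in $f^{k_0}(e')$, so $f^{k_0+1}(e') = f(f^{k_0}(e'))$, which is tight because $f$ is a train track map, contains either $f(b_0)$ or $f(b_0)^{-1}$ as a subpath; either way both $a_0$ and $a_0^{-1}$ appear there. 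A further application of $f^{k_0}$ then produces $f^{2k_0+1}(e')$ containing both $f^{k_0}(a_0)$ and $f^{k_0}(a_0)^{-1}$ as subpaths, and by primitivity one of them contains any prescribed $e \in E\Gamma$ while the other contains $e^{-1}$. Setting $k := 2k_0+1$ finishes (1).

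For part (2), I would first note by induction on $n$ that $f^n(e) \in (E_+\Gamma)^*$ whenever $e \in E_+\Gamma$, using the preferred orientation. Consequently, for $e, e' \in E_+\Gamma$ the edge $e^{-1} \in E_-\Gamma$ cannot appear in $f^n(e')$, so the transition-matrix entry $[A(f^n)]_{e,e'}$, which by definition counts $e^{\pm 1}$, coincides with the number of occurrences of $e$ itself. Primitivity of $A(f)$ then supplies the desired $k \ge 1$.

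Parts (3) and (4) are essentially bookkeeping about how $\theta_f$ is set up in each case. In case (1) the substitution $\theta_f$ is defined on the full alphabet $E\Gamma$ by $e \mapsto f(e)$; the identity $f(e^{-1}) = f(e)^{-1}$ lifts by induction to $\theta_f^m(e^{-1}) = \theta_f^m(e)^{-1}$, so reversing a nontrivial subpath of $f^m(e)$ produces a nontrivial subpath of $f^m(e^{-1})$, giving (3). In case (2) the substitution $\theta_f$ is naturally defined only on the sub-alphabet $E_+\Gamma$ (well-defined because $f(E_+\Gamma) \subseteq (E_+\Gamma)^*$), so $\mathcal F[\theta_f] \subseteq (E_+\Gamma)^*$. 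For (4) it then suffices to observe that $\gamma \in \overline{\mathcal F[\theta_f]}$ iff $\gamma^{-1}$ is a nontrivial subpath of $f^k(e_+)$ for some $k \ge 1$ and $e_+ \in E_+\Gamma$, which upon reversing paths is equivalent to $\gamma$ being a nontrivial subpath of $f^k(e_+^{-1})$, with $e_+^{-1}$ ranging over $E_-\Gamma$.

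The main obstacle is the combinatorial bookkeeping in part (1); once the interplay of non-orientability, primitivity, and the involution $e \mapsto e^{-1}$ is set out as above, the remaining parts amount to unwinding definitions and tracking which orientation sub-alphabet each piece lives in.
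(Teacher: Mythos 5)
Your proof is correct. The paper states this lemma without proof, calling it ``a straightforward consequence of the definitions,'' and your argument supplies exactly the details one would expect: for (1), bootstrapping a single non-orientable pair $(a_0,b_0)$ through primitivity (using tightness of $f^{k_0+1}$ and $f^{2k_0+1}$ to ensure $f(b_0)$ and $f^{k_0}(a_0)$, $f^{k_0}(a_0)^{-1}$ survive as genuine subpaths); for (2), the closure $f(E_+\Gamma)\subseteq(E_+\Gamma)^\ast$ under iteration; and for (3), (4), the compatibility $f(e^{-1})=f(e)^{-1}$ and the distinction in which alphabet $\theta_f$ is defined on in each case.
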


Part (1) and (2) of Lemma~\ref{lem:orient} directly imply:

\begin{prop}\label{prop:cases}
Let $f:\Gamma\to\Gamma$ be a train track representative of a fully irreducible element $\phi\in\Out(F_r)$.
\begin{enumerate}
\item If $f$ is non-orientable, then $f$ extends to a substitution $\theta_f: (E\Gamma)^\ast\to (E\Gamma)^\ast$ and this substitution is primitive.
\item If $f$ is orientable and $E\Gamma=E_+\Gamma\sqcup E_-\Gamma$ is a preferred orientation, then $f$ extends to a substitution $\theta_f: (E_+\Gamma)^\ast\to (E_+\Gamma)^\ast$ and this substitution is primitive.
\end{enumerate}
\end{prop}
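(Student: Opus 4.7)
The plan is to unpack the definition of a primitive substitution and verify directly that in each case the train track map $f:\Gamma\to\Gamma$ yields the required free monoid homomorphism whose primitivity is exactly the content of Lemma~\ref{lem:orient}(1)--(2).

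First I would recall that a substitution on a finite alphabet $\Sigma$ is a free monoid homomorphism $\theta:\Sigma^\ast\to\Sigma^\ast$ with $|\theta(a)|>0$ for every $a\in\Sigma$, and since $\Sigma^\ast$ is free, to specify such a $\theta$ it suffices to assign a nonempty word in $\Sigma^\ast$ to each letter of $\Sigma$ and extend by concatenation. Primitivity then means there exists $k\ge 1$ such that every letter of $\Sigma$ appears in $\theta^k(a)$ for every $a\in\Sigma$. With this in mind, in both parts of Proposition~\ref{prop:cases} the key observation is simply that, since $f$ is a graph map, for each edge $e$ the image $f(e)$ is an edge-path in $\Gamma$ of positive combinatorial length, and because $f^k$ is tight for every $k\ge 1$, the word one reads off the path $f^k(e)$ is exactly the concatenation computed by iterating the substitution on the letter $e$.

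For part (1), I would take $\Sigma=E\Gamma$ and define $\theta_f$ on generators by $\theta_f(e):=f(e)\in (E\Gamma)^\ast$, extending to $\theta_f:(E\Gamma)^\ast\to(E\Gamma)^\ast$ by concatenation. Each $\theta_f(e)$ is nonempty because $f(e)$ has positive length, so $\theta_f$ is indeed a substitution. To verify primitivity, I would invoke Lemma~\ref{lem:orient}(1): there exists $k\ge 1$ such that for all $e,e'\in E\Gamma$ both $e$ and $e^{-1}$ occur in $f^k(e')=\theta_f^k(e')$. In particular $e$ occurs in $\theta_f^k(e')$ for all $e,e'$, which is the definition of primitivity.

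For part (2), I would use the preferred orientation $E\Gamma=E_+\Gamma\sqcup E_-\Gamma$. By definition of orientability, for each $e\in E_+\Gamma$ every edge appearing in $f(e)$ lies in $E_+\Gamma$, so $f(e)\in (E_+\Gamma)^\ast$. Setting $\Sigma=E_+\Gamma$ and $\theta_f(e):=f(e)$ for $e\in E_+\Gamma$, then extending to $\theta_f:(E_+\Gamma)^\ast\to(E_+\Gamma)^\ast$ by concatenation, yields a well-defined substitution since again $|f(e)|>0$. Primitivity is then immediate from Lemma~\ref{lem:orient}(2): some iterate $f^k=\theta_f^k$ has the property that $e$ occurs in $\theta_f^k(e')$ for all $e,e'\in E_+\Gamma$. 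The only real point of care in the whole argument is making sure in case (2) to restrict the alphabet to $E_+\Gamma$ rather than taking all of $E\Gamma$, which is exactly the purpose of introducing the preferred orientation; beyond this, the proposition is a formal consequence of Lemma~\ref{lem:orient}, so I do not anticipate a genuine obstacle.
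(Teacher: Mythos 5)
Your proof is correct and follows essentially the same route as the paper, which deduces the proposition directly from Lemma~\ref{lem:orient}(1)--(2). The useful detail you make explicit---that tightness of every iterate $f^k$ guarantees $\theta_f^k(e)=f^k(e)$ as words, so the occurrence statements of Lemma~\ref{lem:orient} apply verbatim to the substitution iterates---is left implicit in the paper but is not a change of method.
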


\subsection{Attracting laminations of fully irreducibles}\label{sect:L}

The following notion was introduced by Bestvina, Feighn and Handel in \cite{BFH97}.
\begin{defn}[Attracting lamination of a fully irreducible automorphism]
Let $\phi\in\Out(F_r)$ be a fully irreducible element and let $f:\Gamma\to\Gamma$ be a train track representative of $\phi$. Put $T_f=\tilde\Gamma\in\cvr$ where every edge of $T_f$ is given length $1$.  The \emph{attracting lamination} $L_f$ of $f$ is an abstract algebraic lamination $L_f\subseteq \partial^2 F_r$ on $F_r$ such that for distinct $p,q\in \partial F_r$ we have $(p,q)\in L_f$ if and only if for every finite edge subpath $\beta$ of positive length in the bi-infinite geodesic $\tau$ from $p$ to $q$ in $T_f$, the projection $\gamma$ of $\beta$ to $\Gamma$ satisfies the property that there exist $k\ge 1$ and $e\in E_\Gamma$ such that $\gamma$ is a subpath of $f^k(e)$.

It is proved in \cite{BFH97} that $L_f$ depends only on $\phi$ and not on the choice of a train track representative $f$ of $\phi$. Therefore we also call $L_f$ the \emph{attracting lamination} of $\phi$ and denote  $L_\phi:=L_f$.
\end{defn}

\begin{prop}\label{prop:key}
Let $f:\Gamma\to\Gamma$ be a train track representative of a fully irreducible element $\phi\in\Out(F_r)$.

\begin{enumerate}
\item If $f$ is non-orientable, then $(L_f)_\Gamma=\mathcal F[\theta_f]$.
\item If $f$ is orientable and $E\Gamma=E_+\Gamma\sqcup E_-\Gamma$ is a preferred orientation, then $$(L_f)_\Gamma=\mathcal F[\theta_f]\sqcup \overline{\mathcal F[\theta_f]}.$$
\item $L_f$ is a subexponentially growing lamination on $F_r$. More specifically, there exists $C_0>0$ such that for all $n\ge 0$ we have $\beta_{(L_f)_\Gamma}(n)\le C_0n^2$.
\end{enumerate}

\end{prop}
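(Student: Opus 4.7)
The plan is to derive (1) and (2) directly from the definitions of $L_f$ and $\mathcal F[\theta_f]$ together with Lemma~\ref{lem:orient} and Proposition~\ref{prop:cases}, and then obtain (3) as a combinatorial consequence of Proposition~\ref{prop:primsubst}. For the inclusion $(L_f)_\Gamma\subseteq \mathcal F[\theta_f]\cup \overline{\mathcal F[\theta_f]}$, I would start from a leaf $(p,q)\in L_f$ whose bi-infinite geodesic $\tau$ in $T_f$ projects to a bi-infinite reduced edge-path $w$ in $\Gamma$. By the definition of $L_f$, every finite edge-subpath of $w$ is a subpath of some $f^k(e)$, so lies in $\mathcal F[\theta_f]\cup \overline{\mathcal F[\theta_f]}$. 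In the non-orientable case, Lemma~\ref{lem:orient}(3) gives $\mathcal F[\theta_f]=\overline{\mathcal F[\theta_f]}$, yielding (1)'s forward inclusion. In the orientable case, each $f^k(e)$ is monochromatic (entirely in $E_+\Gamma$ or entirely in $E_-\Gamma$), so every finite subpath of $w$ is monochromatic; a nested exhaustion of $w$ then forces $w$ itself to be monochromatic, and its finite subpaths lie entirely in $\mathcal F[\theta_f]$ or entirely in $\overline{\mathcal F[\theta_f]}$.

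For the reverse inclusion, given $\gamma\in\mathcal F[\theta_f]$, I would produce a leaf of $L_f$ whose projection contains $\gamma$ as a subpath. Write $\gamma$ as a subpath of $f^{m_0}(e_0)$ for some letter $e_0$ and some $m_0\ge 1$. Using primitivity of $\theta_f$ from Proposition~\ref{prop:cases}, for every sufficiently large $n$ and every letter $a$ (resp.\ every $a\in E_+\Gamma$ in case~(2)), the word $\theta_f^n(a)$ contains $e_0$, so $\gamma$ appears as a subpath of $f^{n+m_0}(a)$; moreover primitivity lets me arrange the occurrence of $e_0$ to lie deep in the interior, with unboundedly many letters on both sides as $n\to\infty$. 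Pass to a subsequence and take a limit in the product topology on $(E\Gamma)^{\mathbb Z}$ (pointed at the chosen copy of $\gamma$) to obtain a bi-infinite word $w$ that contains $\gamma$ as a subword and all of whose finite subwords lie in $\mathcal F[\theta_f]$. Each $f^n(a)$ is reduced because $f$ is a train track map, so $w$ is reduced. Lifting $w$ to $T_f=\widetilde\Gamma$ yields a bi-infinite reduced edge-path, which is a bi-infinite geodesic in the tree $T_f$; its pair of ideal endpoints lies in $\partial^2 F_r$ and, by construction, in $L_f$. This proves $\mathcal F[\theta_f]\subseteq (L_f)_\Gamma$, and applying the flip involution $\mathfrak f$ (which preserves $L_f$) gives $\overline{\mathcal F[\theta_f]}\subseteq (L_f)_\Gamma$. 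Combined with the forward inclusion, this establishes (1) and (2).

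For part (3), Proposition~\ref{prop:primsubst} applied to the primitive substitution $\theta_f$ (from Proposition~\ref{prop:cases}) gives $p_{\mathcal F[\theta_f]}(n)\le Cn$, and hence $\beta_{\mathcal F[\theta_f]}(n)\le C'n^2$ by Corollary~\ref{cor:primsubst}. In view of parts (1) and (2) the laminary language $(L_f)_\Gamma$ is at most a disjoint union of two copies of $\mathcal F[\theta_f]$, so $\beta_{(L_f)_\Gamma}(n)\le 2C'n^2$. By Lemma~\ref{lem:comp}, $\beta_{(L_f)_\Gamma,\mathfrak J}(n)\sim \beta_{(L_f)_\Gamma}(n)$, still polynomial and in particular subexponential, so $L_f$ is a subexponentially growing lamination on $F_r$. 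The main technical obstacle I anticipate lies in the compactness/limit step of the reverse inclusion: one must arrange the padding so that the limiting word is genuinely bi-infinite (rather than eventually degenerating to a one-sided ray) and simultaneously remains reduced. The train-track property handles the reducedness and primitivity handles the unbounded two-sided padding, but coordinating these two ingredients is the delicate step; everything after that is a formal consequence of the primitive-substitution bounds from Section~\ref{sect:compl}.
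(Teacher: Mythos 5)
Your proof is correct and follows essentially the same route as the paper: part (3) matches the paper's argument verbatim (primitivity of $\theta_f$ plus Corollary~\ref{cor:primsubst}, doubled in the orientable case), and your treatment of parts (1) and (2) fills in the content that the paper's terse ``follows by comparing the definitions of $L_f$ and $\theta_f$'' elides. The substantive step you correctly identify and supply is the reverse inclusion $\mathcal F[\theta_f]\subseteq (L_f)_\Gamma$ via the pointed diagonal-limit construction; the two-sided padding you worry about is indeed available, coming from expansion together with primitivity (both guaranteed by Proposition~\ref{prop:use}(2), and the spreading argument is the standard $\theta_f^{2n}(a)=\theta_f^n(b_1)\cdots\theta_f^n(b_m)$ trick), and reducedness of the limit is local, inherited from reducedness of each $f^n(a)$. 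A small streamlining: in the orientable forward inclusion you do not actually need that the whole bi-infinite word $w$ is monochromatic — each finite subpath of $w$ is by definition a subpath of some $f^k(e)$ with $e\in E_+\Gamma$ or $e\in E_-\Gamma$, hence lies in $\mathcal F[\theta_f]$ or $\overline{\mathcal F[\theta_f]}$ respectively by Lemma~\ref{lem:orient}, and disjointness of these two sets is immediate from the $E_+\Gamma$ versus $E_-\Gamma$ alphabets.
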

\begin{proof}
Parts (1) and (2) follows by comparing the definitions of $L_f$ and $\theta_f$ and using Proposition~\ref{prop:cases}.

If $f$ is non-orientable, then by part (1) $(L_f)_\Gamma=\mathcal F[\theta_f]$. By Proposition~\ref{prop:cases} the substitution  $\theta_f: (E\Gamma)^\ast\to (E\Gamma)^\ast$ is primitive. Hence the laminary language $(L_f)_\Gamma=\mathcal F[\theta_f]$ is subexponentially growing by Corollary~\ref{cor:primsubst}.

Suppose $f$ is orientable and $E\Gamma=E_+\Gamma\sqcup E_-\Gamma$ is a preferred orientation. By Proposition~\ref{prop:cases} the substitution  $\theta_f: (E_+\Gamma)^\ast\to (E_+\Gamma)^\ast$ is primitive, and therefore, by Corollary~\ref{cor:primsubst}, the language $\mathcal F[\theta_f]$ is subexponentially growing, and more specifically, $\beta_{\mathcal F[\theta_f]}(n)\le Cn^2$ for all $n\ge 1$ where $C\ge 1$ is some constant. Part (2) of Proposition~\ref{prop:key} implies that
\[
\beta_{(L_f)_\Gamma}(n)=2\beta_{\mathcal F[\theta_f]}(n)\le 2Cn^2
\]
for all $n\ge 1$, so that  the laminary language $(L_f)_\Gamma$ is subexponentially growing.

Thus (3) is verified in both cases, as required.
\end{proof}

\begin{thm}\label{thm:HD-fi}
Let $\phi\in\Out(F_r)$ (wher $r\ge 2$) be fully irreducible and let $L_\phi\subseteq \partial^2 F_r$ be the attracting lamination of $\phi$. Let $\mathcal E_{L_\phi}\subseteq \partial F_r$ be the set of endpoints of $L_\phi$.

Let $a>1$ be an arbitrary visual parameter, let $d$ be a corresponding visual metric on $\partial T=\partial F_r$ and let $\hat d$ be the product metric on $\partial F_r\times \partial F_r$.

 Then:
\begin{enumerate}
\item For the metric space $(\partial T, d)$ we have
\[
\dim_H(\mathcal E_{L_\phi})=\dim_p(\mathcal E_{L_\phi})=0.
\]
\item For the metric space $(\partial F_r\times \partial F_r, \hat d)$ we have
\[
\dim_H(L_\phi)=\dim_p(L_\phi)=0.
\]

\item For the metric space $(\partial F_r, d)$ we have
\[
\dim_H\left( \bigcup_{\psi\in\Out_{f.i.}(F_r)} \mathcal E_{L_\psi} \right)=\dim_p\left( \bigcup_{\psi\in\Out_{f.i.}(F_r)} \mathcal E_{L_\psi} \right)=0.
\]
\item For the metric space $(\partial^2 F_r, \hat d)$ we have
\[
\dim_H\left( \bigcup_{\psi\in\Out_{f.i.}(F_r)} L_\psi \right)=\dim_p\left( \bigcup_{\psi\in\Out_{f.i.}(F_r)} L_\psi \right)=0.
\]

\end{enumerate}

\end{thm}
\begin{proof}
Since by Proposition~\ref{prop:key}, $L_\phi$ is a subexponentially growing abstract algebraic lamination on $F_r$. Therefore the conclusion of parts (1) and (2) of the theorem follows by Theorem~\ref{thm:Dim}.

Now parts (1) and (2) directly imply parts (3) and (4) since the index set $\Out_{f.i.}(F_r)$ is countable and the Hausdorff and packing dimensions are countably stable by Proposition~\ref{prop:dim}.
\end{proof}

\subsection{Exponentially growing automorphisms}\label{sect:LL}

Recall that an element $\phi\in Out(F_r)$ is \emph{exponentially growing} if for some (equivalently, any) free basis $A$ of $F_r$ there exists $1\ne w\in F_r$ such that
\[
\lambda_A(\phi, w)=\limsup_{n\to\infty}\sqrt[n]{||\phi^n(w)||_A} >1.
\]
In particular, every fully irreducible $\phi\in Out(F_r)$ is exponentially growing.
We denote
\[
\Out_{exp}(F_r)=\{\psi\in \Out(F_r): \psi \text{ is exponentially growing}\},
\]

For an arbitrary exponentially growing $\phi\in Out(F_r)$ the definition of the set $\mathcal L(\phi)$ of attracting laminations for $\phi$, originally introduced in \cite{BFH00} is considerably more complicated than in the fully irreducible case and we omit its details here. (Note, however, that $\mathcal L(\phi)=\mathcal L(\phi^k)$ for any $k\ge 1$.) The cleanest modern version uses  ``completely split" (relative) train track representatives $f:\Gamma\to\Gamma$, introduced in~\cite{FH11}, which always exist once an exponentially growing $\phi\in Out(F_r)$ is replaced by its suitable positive power.

If $f:\Gamma\to\Gamma$ a completely split train track representing an  exponentially growing $\phi\in Out(F_r)$, then for every $L\in \mathcal L(\phi)$ there exists a fixed vertex $v$ of $\Gamma$ and a fixed direction $e\in E\Gamma$ from an exponentially growing stratum such that for the ``combinatorial eigenray" $\rho_e=e\dots$ in $\Gamma$ (defined similarly to the case of a primitive substitution or a fully irreducible $\phi$) one has

\[
L_\Gamma=\mathcal F[\rho_e],
\]
up to a possible symmetrization.

In \cite{HL22} Hilion and Levitt classify the complexity types associated expanding fixed points of $\partial F_r$ for elements of $\Aut(F_r)$ and use this classification to prove (see Corolloary~5.10 in \cite{HL22}:

\begin{prop}\label{prop:HL22}
Let $\phi\in \Out(F_r)$ be exponentially growing, let $L\in \mathcal L(\phi)$ and let $(\alpha,\Gamma)$ be a marked graph structure for $F_r$.
Then $p_\Gamma(n)$ is equivalent to one of $n, n\log n, n\log\log n, n^2$.
\end{prop}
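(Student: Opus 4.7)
The plan is to reduce the complexity analysis for $L_\Gamma$ to a factor complexity question for a single infinite word, and then invoke a finite classification according to the structure of completely split train track representatives. First, since $\mathcal L(\phi)=\mathcal L(\phi^k)$ for any $k\ge 1$, replace $\phi$ by a suitable positive power so that $\phi$ admits a completely split relative train-track (CT) representative $f\colon\Gamma_0\to\Gamma_0$ in the sense of Feighn--Handel. As recalled in Section~\ref{sect:LL}, each $L\in\mathcal L(\phi)$ is then realized via a fixed direction $e$ in some exponentially growing (EG) stratum $H_s$, and the combinatorial eigenray $\rho_e=e\,f(e)|_e\,f^2(e)|_{f(e)}\cdots$ in $\Gamma_0$ satisfies $L_{\Gamma_0}=\mathcal F[\rho_e]$ up to symmetrization. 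By Corollary~\ref{cor:equiv} it suffices to prove the complexity equivalence for this particular $\Gamma_0$; the statement for a general marked graph $(\alpha,\Gamma)$ then follows by transporting the equivalence class of $p_{L_{\Gamma_0}}$ across the change-of-graph via Propositions~\ref{prop:Y} and~\ref{prop:general}.

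The heart of the proof is the analysis of the factor complexity $p_{\mathcal F[\rho_e]}(n)$. Lift $\rho_e$ to a geodesic ray $\tilde\rho_e$ in $\widetilde{\Gamma_0}$, and choose a lift $\tilde f$ fixing the initial vertex; this produces an automorphism $\Phi\in\Aut(F_r)$ representing $\phi$ whose boundary extension $\widehat\Phi$ has $\xi:=\tilde\rho_e(\infty)$ as an expanding fixed point. Under the identification $\partial F_r=\partial\widetilde{\Gamma_0}$, subwords of $\rho_e$ correspond to edge-labels of finite subsegments of $\tilde\rho_e$, so $p_{L_{\Gamma_0}}(n)$ is exactly the factor complexity of the one-sided sequence reading off the ray approaching $\xi$. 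Thus the question becomes: what are the possible growth rates of factor complexity for eigenrays of expanding fixed points of elements of $\Aut(F_r)$?

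At this point one invokes the classification of Hilion--Levitt. Using a CT representative one organizes $\rho_e$ into complete splittings in which each term is either an edge of $H_s$, an edge from a lower stratum, or an exceptional/nexus connecting path whose dynamics is governed by the polynomial strata below $H_s$. Counting factors of length $n$ amounts to counting (i) the number of splitting windows of combinatorial diameter $\le n$ appearing in $\rho_e$, which grows linearly from the primitive Perron--Frobenius behavior on $H_s$, multiplied by (ii) the number of distinct concatenations of lower-stratum connecting paths that can fit inside such a window. Step (ii) is the delicate one: it depends on whether the lower strata contributing to $\rho_e$ are absent, linear with bounded twist, linear with iterated logarithmic twist, or genuinely polynomial of higher degree, and each of these four cases contributes an additional factor of $1$, $\log\log n$, $\log n$, or $n$, yielding total growth equivalent to $n$, $n\log\log n$, $n\log n$, or $n^2$ respectively. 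Upper bounds come from counting these splitting configurations directly; matching lower bounds come from exhibiting, in each combinatorial type, explicit infinite families of distinct factors produced by sliding a window through $\rho_e$ and varying the polynomial-stratum data.

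The main obstacle is the lower bound in the mixed cases, especially separating the $n\log n$ and $n\log\log n$ classes: one must show that the iteration of a polynomial stratum under $f$ produces exactly enough, but not too many, distinct connecting segments, and that this count is a combinatorial invariant of the CT structure rather than an artifact of $\Gamma_0$. This is precisely what the Hilion--Levitt classification of expanding fixed points (\cite[Corollary 5.10]{HL22}) achieves; once their classification is in hand, parts (1)--(3) of the translation above combine with it to give the stated four-way equivalence for $p_\Gamma(n)$.
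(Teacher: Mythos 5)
The paper does not actually prove this proposition; it is presented verbatim as a direct citation of Corollary~5.10 of Hilion--Levitt~\cite{HL22}, which they obtain from their classification of expanding fixed points of $\Aut(F_r)$-elements acting on $\partial F_r$. Your proposal correctly recognizes this and, in the end, also defers to that citation, so the approach is ultimately the same as the paper's. You also correctly observe that Corollary~\ref{cor:equiv} (via Propositions~\ref{prop:Y} and~\ref{prop:general}) lets one transport the complexity class from a preferred CT graph $\Gamma_0$ to an arbitrary marked graph, which is precisely how the proposition, once known for $\Gamma_0$, propagates to all $\Gamma$.

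However, be cautious with the middle portion of your write-up. The discussion in which the four complexity classes $n$, $n\log\log n$, $n\log n$, $n^2$ are attributed to lower-stratum behaviors you call ``absent,'' ``linear with bounded twist,'' ``linear with iterated logarithmic twist,'' and ``genuinely polynomial of higher degree'' is not a paraphrase of any actual classification in \cite{HL22}: those are not defined notions, and the real Pansiot/Hilion--Levitt dichotomies (quasi-uniform, polynomially diverging, exponentially diverging) are not accurately captured by them. Presenting this as the mechanism behind the theorem is a genuine gap if you intend it to carry weight as part of a proof; as written it is speculation, and the asserted one-to-one correspondence between your four ``cases'' and the four complexity classes does not follow from anything you prove. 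Since the paper treats the proposition as a black box, the correct course is either to cite \cite[Cor.~5.10]{HL22} directly (as the paper does) without the speculative reconstruction, or to actually reproduce the Pansiot-type analysis of \cite{HL22}, which is far more involved than your sketch suggests.
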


\begin{thm}\label{thm:HD-exp}
Let $\phi\in\Out(F_r)$ (wher $r\ge 2$) be exponentially growing and let $L\in \mathcal L(\phi)$, $L\subseteq \partial^2 F_r$ be an attracting lamination of $\phi$. Let $\mathcal E_{L}\subseteq \partial F_r$ be the set of endpoints of $L$.

Let $a>1$ be an arbitrary visual parameter, let $d$ be a corresponding visual metric on $\partial F_r$ and let $\hat d$ be the product metric on $\partial F_r\times\partial F_r$.

 Then:
\begin{enumerate}
\item For the metric space $(\partial F_r, d)$ we have
\[
\dim_H(\mathcal E_{L})=\dim_p(\mathcal E_{L})=0.
\]
\item For the metric space $(\partial F_r\times \partial F_r, \hat d)$ we have
\[
\dim_H(L)=\dim_p(L)=0.
\]

\item For the metric space $(\partial F_r, d)$ we have

\[
\dim_H\left( \bigcup_{\psi\in\Out_{exp}(F_r)} \bigcup_{L\in \mathcal L(\psi)} \mathcal E_{L} \right)=\dim_p\left( \bigcup_{\psi\in\Out_{exp}(F_r)} \bigcup_{L\in \mathcal L(\psi)} \mathcal E_{L} \right)=0.
\]

\item For the metric space $(\partial F_r\times \partial F_r, \hat d)$ we have

\[
\dim_H\left( \bigcup_{\psi\in\Out_{exp}(F_r)} \bigcup_{L\in \mathcal L(\psi)} L \right)=\dim_p\left( \bigcup_{\psi\in\Out_{exp}(F_r)} \bigcup_{L\in \mathcal L(\psi)} L \right)=0
\]

\end{enumerate}

\end{thm}
\begin{proof}
Proposition~\ref{prop:HL22} implies that $L$ is a subexponentially growing abstract algebraic lamination on $F_r$. 
Therefore the conclusion of parts (1) and (2) of the theorem follows by Theorem~\ref{thm:Dim}.

Now parts (1) and (2) directly imply parts (3) and (4) since since the index set $\Out_{exp}(F_r)$ is countable and for every exponentially growing $\psi\in \Out(F_r)$ the set $\mathcal L(\psi)$ is finite.
\end{proof}

\section{Ending laminations and the Cannon-Thurston map}\label{sect:EL}

Given $\phi\in \Out(F_r)$ and an automorphism $\Phi\in \Aut(F_r)$ in the outer automorphism class $\phi$, one can consider the free-by-cyclic group
\[
G=F_r\rtimes_\Phi \mathbb Z=F_r\rtimes_\Phi \langle t\rangle=\langle F_r, t| tht^{-1}=\Phi(h), h\in F_r\rangle. \tag{$\dag$}
\]
Replacing the stable letter $t$ in the above HNN-extension by $t_1=ut$, where $u\in F_r$, replaces $\Phi$ by its composition with the inner automorphism of $F_r$ corresponding to the conjugation by $u$. Therefore  $G$ only depends on $\phi$ rather than on $\Phi$ and we denote $G_\phi=G$. The group $G_\phi$ naturally fits into a short exact sequence
\[
1\to F_r \to G_\phi \to \mathbb Z\to 1.\tag{$\clubsuit$}
\]

Brinkmann~\cite{Br00} proved that $G_\phi$ is word-hyperbolic if and only if $\phi$ is atoroidal. A general result of Mitra~\cite{M98a} about short exact sequences of word-hyperbolic groups implies that if $G_\phi$ is word-hyperbolic then the inclusion $i:F_r\to G_\phi$ extends to a continuous $F_r$-equivariant map $\partial i: \partial F_r\to \partial G_\phi$ (which in this case is easily seen to be surjective). The map $\partial i$ is called the \emph{Cannon-Thurston map}.

In \cite{M97} Mitra also describes the fibers of $\partial i$ for the case of short exact sequences of three word-hyperbolic groups. We will recall how Mitra's result works for the case of a short exact sequence $(\clubsuit)$ above, assuming that $\phi\in\Out(F_r)$ is atoroidal (so that $G_\phi$ is word-hyperbolic).

\begin{defn}[Ending lamination]
Let $\phi\in \Out(F_r)$ be an atoroidal element. Let $f:\Gamma\to\Gamma$ be a topological representative of $\phi$ and let $T_f=\tilde \Gamma\in \cvr$, where every edge of $T_f$ is given length $1$.

For $1\ne h\in F_r$ put $\Lambda_{\phi,h}\subseteq \partial^2 F_r$ to be the set of all $(p,q)\in \partial^2 F_r$ such that for every finite edge subpath $\beta$ of positive length in the bi-infinite geodesic $\tau$ from $p$ to $q$ in $T_f$, the projection $\gamma$ of $\beta$ to $\Gamma$ has the property that for some $n\ge 1$ $\gamma$ is a subpath of a cyclic permutation of the cyclicallay tightened form of the path $f^n(h)$ in $\Gamma$. (The assumption that $\phi$ is atoroidal implies that $n\to\infty$ as $|\beta|\to\infty$.)
Put
\[
\Lambda_\phi=\bigcup_{1\ne h\in F_r} \Lambda_{\phi,h}.
\]
It is known~\cite{KL15,DKT16} that $\Lambda_\phi$ is an abstract algebraic lamination on $F_r$ and that its definition above does not depend on the choice of a topological representative of $\phi$. See \cite{DKT16} for an equivalent description of $\Lambda_\phi$ directly in terms of $\partial F_r$, without the reference to choosing a topological representative of $\phi$ or even a generating set for $F_r$.

The subset $\Lambda_\phi\subseteq \partial^2 F_r$ is called the \emph{ending lamination corresponding to $\phi$}.
\end{defn}

A key result of Mitra~\cite{M97} about the fibers of the Cannon-Thurston map for short exact sequences of word-hyperbolic groups, when applied to $G_\phi$, implies:

\begin{prop}\label{prop:Mitra}\cite{M97}
Let $\phi\in \Out(F_r)$ be an atoroidal element and let $\partial i: \partial F_r\to \partial G_\phi$ be the Cannon-Thurston map.
Then for distinct $p,q\in \partial F_r$ we have $\partial i(p)=\partial i(q)$ if and only if $(p,q)\in \Lambda_{\phi} \cup \Lambda_{\phi^{-1}}$.
\end{prop}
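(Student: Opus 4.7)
The plan is to analyze $\partial i$ via a geometric ``mapping telescope'' model of $G_\phi$ that makes the semidirect-product structure transparent. Fix a train track representative $f:\Gamma\to\Gamma$ of $\phi$ and form the mapping telescope $M_f$ by gluing $\Gamma\times\{n\}$ to $\Gamma\times\{n+1\}$ along a cylinder of $f$ for each $n\in\mathbb Z$; then $\pi_1(M_f)=G_\phi$, so the universal cover $\widetilde{M_f}$ is a proper Gromov-hyperbolic geodesic space quasi-isometric to $G_\phi$ (using Brinkmann's theorem that $\phi$ atoroidal makes $G_\phi$ word-hyperbolic). This $\widetilde{M_f}$ is stratified into ``levels'', each an isometric copy of $T_f$; consecutive levels are joined by ``vertical ladders'' that send a subpath of $f(e)$ at level $n$ to the edge $e$ at level $n+1$ via an arc of bounded length. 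Iterating, a subpath of $f^k(h)$ at level $0$ is joined to a copy of $h$ at level $k$ by a fan of vertical arcs whose total length is $O(k+|h|)$, \emph{independent} of the length of the subpath.

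For the ``if'' direction, suppose $(p,q)\in\Lambda_\phi$. By the definition of $\Lambda_\phi$, every finite subpath $\beta$ of the bi-infinite $T_f$-geodesic from $p$ to $q$ projects in $\Gamma$ to a subpath of a cyclic rotation of $f^k(h)$ for some $k\ge 1$ and some $1\ne h\in F_r$. Choosing approximants $g_m\to p$ and $h_m\to q$ in $F_r$ and applying the fan construction yields paths in $\widetilde{M_f}$ from $g_m$ to $h_m$ of length $o\bigl(d_{T_f}(g_m,h_m)\bigr)$, which forces $g_m$ and $h_m$ to converge to the same point of $\partial G_\phi$; hence $\partial i(p)=\partial i(q)$. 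The case $(p,q)\in\Lambda_{\phi^{-1}}$ is symmetric, with ladders going downward and a train track representative of $\phi^{-1}$ playing the role of $f$.

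For the ``only if'' direction, suppose $p\ne q$ in $\partial F_r$ satisfy $\partial i(p)=\partial i(q)$, and let $\sigma_m$ be geodesics in $\widetilde{M_f}$ between approximants $g_m\to p$ and $h_m\to q$. Since $d_{T_f}(g_m,h_m)\to\infty$ while $d_{G_\phi}(g_m,h_m)$ is sublinear in $d_{T_f}(g_m,h_m)$ (the latter being forced by $\partial i(p)=\partial i(q)$), each $\sigma_m$ must reach heights $|N_m|\to\infty$; passing to a subsequence assume $N_m\to+\infty$ (the other case symmetrically produces a leaf of $\Lambda_{\phi^{-1}}$). The portion of $\sigma_m$ near its apex at level $N_m$ has bounded horizontal diameter, so pulling it back down level-by-level via vertical ladders realizes the $T_f$-geodesic $[g_m,h_m]$ as the $f^{N_m}$-image of a bounded-size path in $\Gamma$, up to bounded error. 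Passing to the limit, every finite subpath of the $T_f$-geodesic from $p$ to $q$ is a subpath of $f^k(h)$ for suitable $k$ and $h$, so $(p,q)\in\Lambda_\phi$. The main obstacle is the quantitative control in this pull-back step: one must show that the horizontal projection from level $N_m$ down to level $0$ preserves the combinatorial shape of the geodesic up to uniformly bounded error. This rests on the train track property (so that $f^k$ has no back-tracking on legal turns, ensuring the fan construction is combinatorially faithful) together with the hyperbolicity of $\widetilde{M_f}$ (which prevents geodesics from oscillating between levels and forces them to spend most of their length near their apex).
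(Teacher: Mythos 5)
The paper does not prove this statement; it is cited directly to Mitra~\cite{M97} (combined with Brinkmann~\cite{Br00} for hyperbolicity of $G_\phi$), so there is no in-paper proof to compare against. Your sketch does follow the general conceptual framework of Mitra's argument — the mapping telescope model of $G_\phi$, using vertical ladders to propagate leaves upward in the ``if'' direction and tracking the apex level of geodesics in the ``only if'' direction — so the approach is the right one.

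That said, the sketch has a genuine gap in the ``only if'' direction. You write that ``the portion of $\sigma_m$ near its apex at level $N_m$ has bounded horizontal diameter'' and that pulling it back down ``realizes the $T_f$-geodesic $[g_m,h_m]$ as the $f^{N_m}$-image of a bounded-size path in $\Gamma$, up to bounded error,'' and you acknowledge this as the main obstacle. But this is not something that follows merely from ``the train track property plus hyperbolicity of $\widetilde{M_f}$'' as stated: it is precisely the content of Mitra's construction of a quasiconvex \emph{hyperbolic ladder} through the apex and the attendant \emph{flaring} (or cocompactness/retraction) estimates showing that the ladder fellow-travels the ambient geodesic uniformly at every level. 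Without a quantitative flaring statement controlling how horizontal distances in $\widetilde{M_f}$ contract/expand as one moves between levels, there is no reason a priori that the downward projection is combinatorially faithful: a geodesic in a hyperbolic space can a priori wander horizontally a lot even near its apex, and the train track property alone controls forward iteration of $f$, not arbitrary geodesics in the telescope. In Mitra's actual argument this is handled by first constructing the ladder as a uniformly quasiconvex subset admitting a coarse Lipschitz retraction, and then using it both to define $\partial i$ and to identify its fibers; that machinery cannot be elided.

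Two smaller points. First, in the ``if'' direction, producing paths of length $o(d_{T_f}(g_m,h_m))$ is not by itself what is needed: to show $\partial i(p)=\partial i(q)$ you must show the Gromov product $(g_m,h_m)_{x_0}$ in $G_\phi$ tends to infinity, which requires lower bounds on $d_{G_\phi}(x_0,g_m)$ and $d_{G_\phi}(x_0,h_m)$ relative to $d_{G_\phi}(g_m,h_m)$, not just an upper bound on the latter; since $F_r$ is exponentially distorted in $G_\phi$ this requires care. Second, the paper's definition of $\Lambda_\phi$ uses subpaths of cyclic permutations of the cyclically tightened forms of $f^n(h)$ rather than plain subpaths of $f^k(h)$; your ``only if'' argument as written would only produce the latter, and reconciling the two is an extra (routine, but needed) step.
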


For a subset $S\subseteq \partial^2 F_r$, denote by $Tran(S)\subseteq \partial^2 F_r$ the transitive closure of $S$ in $\partial^2 F_r$. That is, $Tran(S)$ consists of all $(p,q)\in \partial^2 F_r$ such that for some $n\ge 1$ there exist $(p_0,p_1), (p_1,p_2), \dots, (p_{n-1},p_n)\in S$ with $p_0=p$ and $p_n=q$.

In \cite{KL15} Kapovich and Lustig showed how $\Lambda_\phi$ is related to $L_\phi$ in the case where $\phi\in\Out(F_r)$ is atoroidal and fully irreducible:

\begin{prop}\label{prop:KL}\cite{KL15}
Let $\phi\in\Out(F_r)$ be atoroidal and fully irreducible.

Then $\Lambda_\phi=Tran(L_\phi)=L(T_-(\phi))$, where $T_-(\phi)\in \overline{\cvr}$ is the attracting tree for $\phi^{-1}$ and where $L(T_-(\phi))$ is the dual lamination of $T_-(\phi)$, in the sense of \cite{CHL08b}.  Moreover, $\Lambda_\phi\setminus L_\phi$ consists of $F_N$-orbits of finitely many points of $\partial^2 F_r$.
\end{prop}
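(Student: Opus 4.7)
The plan is to establish the chain of inclusions
$L_\phi \subseteq Tran(L_\phi) \subseteq \Lambda_\phi \subseteq L(T_-(\phi)) \subseteq Tran(L_\phi)$
and then derive the orbit-finiteness claim from the train track structure. Fix a train track representative $f\colon\Gamma\to\Gamma$ of $\phi$, whose transition matrix $A(f)$ is primitive by Proposition~\ref{prop:use}(2). For $L_\phi\subseteq\Lambda_\phi$: every finite subpath of an $L_\phi$-leaf projects to a subpath of $f^k(e)$ for some $e\in E\Gamma$, and primitivity yields, for each $1\ne h\in F_r$, an integer $N$ such that $e$ appears in the cyclically tightened $f^N(h)$; hence the given finite subpath appears in $f^{k+N}(h)$, witnessing membership in $\Lambda_{\phi,h}$. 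To show $\Lambda_\phi$ is transitively closed, given $(p_0,p_1),(p_1,p_2)\in\Lambda_\phi$ witnessed by $h_1,h_2\in F_r$, I would produce a single $h\in F_r$ whose iterates $f^n(h)$ contain arbitrarily long subpaths tracking both bi-infinite geodesics, so that $(p_0,p_2)\in\Lambda_\phi$. Together with $L_\phi\subseteq\Lambda_\phi$, this gives $Tran(L_\phi)\subseteq\Lambda_\phi$.

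For $\Lambda_\phi\subseteq L(T_-(\phi))$ I would use the standard characterization of the dual lamination: $(p,q)\in L(T_-(\phi))$ iff there exists a sequence $g_n\in F_r$ with $g_n^{-\infty}\to p$, $g_n^{+\infty}\to q$, and translation length $\|g_n\|_{T_-(\phi)}\to 0$. Because $T_-(\phi)$ is the attracting tree for $\phi^{-1}$ with stretch factor $\lambda(\phi)>1$, one has the expansion identity $\|\phi^n(h)\|_{T_-(\phi)}=\lambda(\phi)^{-n}\|h\|_{T_-(\phi)}\to 0$. The axes of $\phi^n(h)$ project in $\Gamma$ to cyclic permutations of the cyclically tightened $f^n(h)$, so their endpoint pairs accumulate on the leaves of $\Lambda_{\phi,h}$; taking the union over $1\ne h\in F_r$ yields the desired inclusion.

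The technical heart of the proof is $L(T_-(\phi))\subseteq Tran(L_\phi)$. I would rely on the Coulbois--Hilion--Lustig structure theory \cite{CHL08b} for dual laminations of very small $F_r$-trees in $\overline{\cvr}$, combined with train track expansion by $f$. A leaf of $L(T_-(\phi))$ is approximated by axes of short-translation-length elements, and by primitivity together with compactness of $\Gamma$ these axes look locally like leaves of $L_\phi$ except at finitely many branching positions; such switch points correspond to branch points of $T_-(\phi)$ and, in the fully irreducible atoroidal setting, are classified by the indivisible Nielsen paths (INPs) of $f$. Chaining $L_\phi$-leaves across these switches is precisely the transitive closure operation. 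Finally, since an atoroidal fully irreducible $\phi$ admits only finitely many $F_r$-orbits of INPs in any train track representative (atoroidality being crucial, as it rules out periodic conjugacy classes that would otherwise generate infinitely many additional orbits), closing $L_\phi$ under concatenation at INPs introduces only finitely many $F_r$-orbits of new leaves, giving $\Lambda_\phi\smsm L_\phi$ the required structure. The principal obstacle is this last inclusion, which requires the full weight of the $\mathbb R$-tree machinery; the other inclusions and the finiteness assertion follow comparatively directly from the train track and INP structure once the dictionary between $\Lambda_\phi$, axes in $T_f$, and translation lengths in $T_-(\phi)$ is in place.
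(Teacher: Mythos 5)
This proposition is not proved in the paper: it is stated with an explicit citation to \cite{KL15}, where Kapovich and Lustig establish it, and the surrounding text makes clear that it is being imported as a black box (with the description of the exceptional leaves further deferred to \cite{KL14}). So there is no in-paper argument to compare your outline against; what you have written is an attempt to reconstruct the proof of a cited external theorem, which is beyond what the present paper requires.

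Taken on its own terms, your outline captures the right global shape (the cycle of inclusions $L_\phi\subseteq Tran(L_\phi)\subseteq\Lambda_\phi\subseteq L(T_-(\phi))\subseteq Tran(L_\phi)$, and the role of INPs in accounting for the finitely many extra orbits), but several steps as written would not go through. First, in the transitivity step you propose, given $(p_0,p_1),(p_1,p_2)\in\Lambda_\phi$, to find an $h$ whose iterates track ``both bi-infinite geodesics''; but what you actually need is that every finite subpath of the single geodesic $[p_0,p_2]$ in $T_f$ appears in a cyclic permutation of a tightened $f^n(h)$, and that geodesic coincides with $[p_0,p_1]$ only near $p_0$ and with $[p_1,p_2]$ only near $p_2$, branching off both at a junction vertex; controlling the behaviour across that junction is precisely the content of the exceptional (``diagonal'') leaves and cannot be waved through. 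Second, the scaling identity should read $\|\phi(h)\|_{T_-(\phi)}=\lambda(\phi^{-1})^{-1}\|h\|_{T_-(\phi)}$, since $T_-(\phi)$ is the attracting tree of $\phi^{-1}$ and $\lambda(\phi)\ne\lambda(\phi^{-1})$ in general; this does not affect the limit being zero, but the formula as stated is wrong. Third, and most importantly, the inclusion $L(T_-(\phi))\subseteq Tran(L_\phi)$ and the finiteness of $\Lambda_\phi\setminus L_\phi$ modulo $F_r$ are the actual theorem of \cite{KL15} (together with \cite{KL14}); your sketch defers to ``the Coulbois--Hilion--Lustig structure theory'' and an unproven dictionary between branch points of $T_-(\phi)$ and INPs of $f$, so the technical heart is a placeholder rather than an argument. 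For the purposes of this paper none of this is needed: the correct move is simply to cite \cite{KL15}, as the author does.
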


%\begin{cor}\label{cor:tran}
%Let $\phi\in \Out(F_r)$ be be an atoroidal element. Then $\mathcal E_{\Lambda_\phi}=\mathcal E_{L_\phi}$.
%\end{cor}
 %\begin{proof}
 %By Proposition~\ref{prop:KL}, $\Lambda_\phi=Tran(L_\phi)$ which directly implies $\mathcal E_{\Lambda_\phi}=\mathcal E_{L_\phi}$ by the definition of transitive closure.
 %\end{proof}

 \begin{cor}\label{cor:HD-fi}
 Let $\phi\in \Out(F_r)$ is atoroidal and fully irreducible and let $G_\phi=F_r\rtimes_\phi \mathbb Z$ be the corresponding free-by-cyclic group. Let $\Lambda_\phi\subseteq \partial^2 F_r$ be the ending algebraic lamination of $\phi$. Let $\Lambda\subseteq \partial^2 F_r$ be the Cannon-Thurston lamination for the Cannon-Thurston map corresponding to the inclusion $F_r\le G_\phi$.

Let $a>1$ be an arbitrary visual parameter and let $d$ be a visual metric on $\partial F_r$ with the visual parameter $a$. Let $\hat d$ be the corresponding product metric on $\partial F_r\times \partial F_r$.
 Then:
\begin{enumerate}
\item For the metric spaces $(\partial F_r, d)$ and $(\partial F_r\times \partial F_r, \hat d)$  we have
\[
\dim_H(\mathcal E_{\Lambda_\phi})=\dim_p(\mathcal E_{\Lambda_\phi})=0 \text{ and  } \dim_H(\mathcal E_{\Lambda})=\dim_p(\mathcal E_{\Lambda})=0
\]
and 
\[
\dim_H(\Lambda_\phi)=\dim_p(\Lambda_\phi)=0 \text{ and  } \dim_H(\Lambda)=\dim_p(\Lambda)=0.
\]
\item For the metric space $(\partial F_r, d)$ and $(\partial F_r\times \partial F_r, \hat d)$ we have
\[
\dim_H\left( \bigcup_{\psi\in\Out_{f.i.a.}(F_r)} \mathcal E_{\Lambda_\psi} \right)=\dim_p\left( \bigcup_{\psi\in\Out_{f.i.a.}(F_r)} \mathcal E_{\Lambda_\psi} \right)=0.
\]
and
\[
\dim_H\left( \bigcup_{\psi\in\Out_{f.i.a.}(F_r)} \Lambda_\psi \right)=\dim_p\left( \bigcup_{\psi\in\Out_{f.i.a.}(F_r)} \Lambda_\psi \right)=0.
\]
\end{enumerate}
\end{cor}

\begin{proof}

For (1), note that, for an atoroidal fully irreducible $\phi$, by Proposition~\ref{prop:KL} we have $\Lambda_\phi=Tran(L_\phi)$ which directly implies $\mathcal E_{\Lambda_\phi}=\mathcal E_{L_\phi}$ by the definition of transitive closure.
Then Theorem~\ref{thm:HD-fi} implies that $\mathcal E_{\Lambda_\phi}$ has Hausdorff dimension $\dim_H(\mathcal E_{\Lambda_\phi})=0$. Hence, by equivalences established in Corollary~\ref{cor:var}. we have 
\[
\dim_H(\mathcal E_{\Lambda_\phi})=\dim_p(\mathcal E_{\Lambda_\phi})=\dim_H(\Lambda_\phi)=\dim_p(\Lambda_\phi)=0.
\]
The same conclusion applies to $\phi^{-1}$. Recall that by Mitra~\cite{M97}, we have $\Lambda=\Lambda_{\phi}\cup \Lambda_{\phi^{-1}}$. 
Therefore
\[
\dim_H(\mathcal E_{\Lambda})=\dim_p(\mathcal E_{\Lambda})= \dim_H(\Lambda)=\dim_p(\Lambda)=0,
\]
and (1) holds, as required.

Now part (1) implies part (2) since the index set $\Out_{f.i.a.}(F_r)$ is countable.

\end{proof}

\section{Open problems}\label{sect:problems}

Theorem~\ref{thm:A} and Corollary~\ref{cor:B} raise several further natural questions.

\begin{prob}\label{prob:1}
Let $\phi\in\Out(F_r)$ is fully irreducible and atoroidal, and let $d$ be a visual metric on $\partial F_r$ for some $a>1$.

Let $\partial i:\partial F_r\to\partial G_\phi$ be the Cannon-Thurston map.
Let $\mathcal{N}\subseteq \partial G_\phi$ be the set of all is non-conical limit points for the action of $F_r$ on $\partial G_\phi$, and let $\mathcal B=(\partial i)^{-1}(\mathcal N)$.

Is it true that  $\dim_H(\mathcal B)=0$?

\end{prob}

A result of \cite{JKLO} provides a characterization of non-conical limit points for a Cannon-Thurston map $j:\partial H \to Z$ for the convergence action of a non-elementary word-hyperbolic metric group $H$ on a metrizable compactum $Z$. Applied to the setting of Problem~\ref{prob:1}, this result says that $p\in \partial F_r$ belongs to $\mathcal B$ if and only if $p$ is "asymptotic" to the Cannon-Thurston lamination $\Lambda=\Lambda_\phi\cup \Lambda_{\phi^{-1}}$. There are both dynamical and geometric characterizations of being "asymptotic" to $\Lambda$ obtained in \cite{JKLO}.  Using these results, \cite{JKLO}  also proves that if the Cannon-Thurston map $\partial i: \partial H\to\partial G$ is non-injective, where $H$ is a non-elementary word-hyperbolic subgroup of a word-hyperbolic group $G$, then there always exists a non-conical limit point $z\in \partial G$ for the action of $H$ on $\partial G$ such that $\#((\partial i)^{-1}(z)=1$. In the context of Problem~\ref{prob:1} it follows that $\Lambda=\Lambda_\phi\cup \Lambda_{\phi^{-1}}$ is a proper subset of $\mathcal B$. However, precisely how much bigger than $\Lambda_\phi\cup \Lambda_{\phi^{-1}}$ the set $\mathcal B$ is remains unclear, particularly in terms of the Hausdorff dimension of $\mathcal B$.

As noted above, the group $G_\phi=F_r\rtimes_\phi\mathbb Z$ is word-hyperbolic if and only if $\phi$ is atoroidal, which includes a wide variety of situations where $\phi$ is atoroidal but not fully irreducible. In light of Corollary~\ref{cor:B} it is natural to ask:

\begin{prob}
Let $\phi\in\Out(F_r)$ be an arbitrary atoroidal element. Does the conclusion of Corollary~\ref{cor:B} hold for the set of endpoints $\mathcal E_{\Lambda_\phi}\subseteq \partial F_r$ of the ending lamination $\Lambda_\phi$? Is it true that the abstract algebraic lamination $\Lambda_\phi$ is subexponentially growing?
\end{prob}

Kapovich and Lustig~\cite{KL15} proved that if $\phi\in \Out(F_r)$ is atoroidal and fully irreducible then $\Lambda_\phi=L(T_{\phi^{-1}})$ where $L(T_{\phi^{-1}})$ is the dual algebraic lamination (in the sense of \cite{CHL08b}) of the attracting tree $T_{\phi^{-1}}\in\overline{cvr}$ of the fully irreducible $\phi^{-1}$. Here $\overline{\cvr}$ is the closure of $\cvr$ in the equivariant Gromov-Hausdorff convergence topology, or, equivalently, in the length function topology~\cite{Pa}.

Attracting trees of fully irreducibles provide  basic examples of "arational" trees in $\overline{cvr}$. Recall that an $\mathbb R$-tree $T\in \overline{\cvr}$ is called \emph{arational} if for its dual lamination $L(T)$ no leaf of $L(T)$ is carried by a proper free factor of $F_r$. Bestvina and Reynolds proved~\cite{BR15} that the hyperbolic boundary $\partial \mathcal{FF}_r$ of the free factor complex $\mathcal{FF}_r$ is equal to $\mathcal{AT}_r/\sim$ where $\mathcal{AT}_r\subseteq  \overline{\cvr}$ is the set of all arational trees and where for $T,S\in \mathcal{AT}_r$ we have $T\sim S$ whenever $L(T)=L(S)$. This naturally raises the following question:

\begin{prob}
Let $T\in \mathcal{AT}_r$ be arbitrary. Does the conclusion of Theorem~\ref{thm:A} hold for $L(T)$? Is it true that the abstract algebraic lamination $L(T)$ is subexponentially growing?
\end{prob}
Note that the work of Coulbois and Hilion~\cite{CH14}, for $T\in\overline{\cvr}$, provides a description of $L(T)$, in terms of a certain dynamical system given by a finite collection of partial isometries of a compact subtree $K$ of $T$. It may be possible to use this description for  $T\in \mathcal{AT}_r$, perhaps under some additional ergodicity assumptions, to gain information about the growth rate of the laminary language of $L(T)$.

Another situation where the structure of the Cannon-Turson laminations on $F_r$ is well understood concerns extensions of $F_r$ by purely atoroidal convex-cocompact subgroups of of $Out(F_r)$. Every such subgroup $Q\le \Out(F_r)$ is itself word-hyperbolic and defines an extension $1\to F_r \to G_Q\to Q\to 1$ where $G_Q$ is known to be again word-hyperbolic by a result of Dowdall and Taylor~\cite{DT18}. A general result of Mitra for short exact sequences of hyperbolic groups then implies that the Cannon-Thurston lamination $\Lambda$ has the form $\Lambda=\cup_{z\in \partial Q}\Lambda_z$ where $\Lambda_z\subseteq \partial^2 F_r$ are ending algebraic laminations on $F_r$. In \cite{DKT16} Dowdall, Kapovich and Taylor showed that for each $z\in \partial Q$ there is an $F_r$-free arational tree $T_z\in \overline{\cvr}$ with $L(T_z)=\Lambda_z$. Thus $\Lambda=\cup_{z\in \partial Q} L(T_z)$. This fact is used in \cite{DKT16} to show that in this situation the Cannon-Thurston map $\partial i: \partial F_r\to \partial G_Q$ is at most $2r$-to-one.

\begin{prob}
Let $Q\le \Out(F_r)$ be a purely atoroidal convex-cocompact subgroup defining the extension $G_Q$ of $F_r$ by $Q$. Let $\Lambda=\cup_{z\in \partial Q}\Lambda_z=\cup_{z\in \partial Q} L(T_z)$ be the corresponding Cannon-Thurston lamination on $F_r$.

(a) If  $d$ is a visual metric on $\partial F_r$, is it true that $\dim_H(\Lambda)=0$?

(b) Let $\mathcal B\subseteq \partial F_r$ be the full $\partial i$-preimage of the set of non-conical limit points in $\partial G_Q$ (for the action of $F_r$ on $\partial G_Q$). Is it true that $\dim_H(\mathcal B)=0$?
\end{prob}

The most general versions of the problems discussed above concern the context of Cannon-Thurston maps for arbitrary word-hyperbolic groups:

\begin{prob}
Let $H$ be a non-elementary word-hyperbolic group with a convergence action on a metrizable compactum $Z$ such that the Cannon-Thurston map $\partial i:\partial H\to Z$ exists and let $\Lambda\subseteq \partial^2 H$ be the associated Cannon-Thurston algebraic lamination. Let $d$ be a visual metric on $\partial H=\partial X$ coming from a properly discontinuous and cocompact isometric action of $H$ on some Gromov-hyperbolic space $X$.

(a) For the metric space $(\partial H, d)$, is it true that $\dim_H(\mathcal E_\Lambda)=0$ and $\dim_H(\Lambda)=0$?

(b) More generally, if $\mathcal B\subseteq \partial H$ is the full $\partial i$-preimage of the set of all non-conical limit points in $Z$, is it true that $\dim_H(\mathcal B)=0$?
\end{prob}
Note that since all visual metrics on $\partial H$ are H\"older-equivalent, the answer to the above problem does not depend on the choice of $X$ and of the visual metric $d$.


\begin{thebibliography}{999}

%\bibitem{An}
%A. Ancona,
%\emph{Positive harmonic functions and hyperbolicity.} Potential theory -- surveys and problems (Prague, 1987), 1--23,
%Lecture Notes in Math., 1344, Springer, Berlin, 1988



\bibitem{BirSer}
J. S. Birman and C. Series, \emph{Geodesics with bounded intersection number on surfaces are sparsely distributed}, Topology \textbf{24} (1985), no. 2, 217--225.
\href{https://doi.org/10.1016/0040-9383(85)90056-4}{doi:10.1016/0040-9383(85)90056-4}


\bibitem{B}
M. Bestvina, \emph{Geometry of outer space.} Geometric group theory, 173--206, IAS/Park City Math. Ser., 21, Amer. Math. Soc., Providence, RI, 2014



\bibitem{BFH97}
M. Bestvina, M. Feighn and M. Handel, \emph{Laminations, trees, and irreducible automorphisms of free groups}, Geom. Funct. Anal. \textbf{7} (1997), 215--244.
\href{https://doi.org/10.1007/PL00001618}{doi:10.1007/PL00001618}


\bibitem{BFH00}
M. Bestvina, M. Feighn and M. Handel, \emph{The Tits alternative for Out(Fn). I. Dynamics of exponentially-growing automorphisms.}
Ann. of Math. (2) \textbf{151} (2000), no. 2, 517--623.
\href{https://doi.org/10.2307/121043}{doi:10.2307/121043}



\bibitem{BH92}
M. Bestvina and M. Handel,
\emph{Train tracks and automorphisms of free groups.}
Ann. of Math. (2) \textbf{135} (1992), no. 1, 1--51.
\href{https://doi.org/10.2307/2946562}{doi:10.2307/2946562}


\bibitem{BR15}
M. Bestvina and P. Reynolds,
\emph{The boundary of the complex of free factors.}
Duke Math. J. \textbf{164} (2015), no. 11, 2213--2251.
\href{https://doi.org/10.1215/00127094-3129702}{doi:10.1215/00127094-3129702}



\bibitem{BP}
C. J. Bishop and Y. Peres,
\emph{Fractals in probability and analysis.}
Cambridge Studies in Advanced Mathematics, 162,  Cambridge University Press, Cambridge, 2017.  ISBN: 978-1-107-13411-9

\bibitem{Br00}
P. Brinkmann, \emph{Hyperbolic automorphisms of free groups}, Geom. Funct. Anal. \textbf{10} (2000), no. 5, 1071--1089.
\href{https://doi.org/10.1007/PL00001647}{doi:10.1007/PL00001647}



\bibitem{CT}
J. W. Cannon and W. P. Thurston, \emph{Group invariant Peano curves},  Geom. Topol. \textbf{11} (2007), 1315--1355.
\href{https://doi.org/10.2140/gt.2007.11.1315}{doi:10.2140/gt.2007.11.1315}


\bibitem{Coor}
M. Coornaert, \emph{Mesures de Patterson-Sullivan sur le bord d'un espace hyperbolique au sens de Gromov.} Pacific J. Math. \textbf{159} (1993), no. 2, 241--270.
\href{https://doi.org/10.2140/pjm.1993.159.241}{doi:10.2140/pjm.1993.159.241}



\bibitem{CH14}
T. Coulbois and A. Hilion,
\emph{Rips induction: index of the dual lamination of an $\mathbb R$-tree.}
Groups Geom. Dyn. \textbf{8} (2014), no. 1, 97--134.
\href{https://doi.org/10.4171/GGD/218}{doi:10.4171/GGD/218}


\bibitem{CHL08a}
T. Coulbois, A. Hilion,  and M. Lustig, \emph{$\mathbb R$-trees and laminations for free groups. I. Algebraic laminations.} J. Lond. Math. Soc. (2) \textbf{78} (2008), no. 3, 723--736.
\href{https://doi.org/10.1112/jlms/jdn052}{doi:10.1112/jlms/jdn052}



\bibitem{CHL08b}
T. Coulbois, A. Hilion,  and M. Lustig, \emph{$\mathbb R$-trees and laminations for free groups. II. The dual lamination of an $\mathbb R$-tree.} J. Lond. Math. Soc. (2) \textbf{78} (2008), no. 3, 737--754.
\href{https://doi.org/10.1112/jlms/jdn053}{doi:10.1112/jlms/jdn053}



\bibitem{CHR15}
T. Coulbois, A. Hilion,  and P. Reynolds, \emph{Indecomposable $F_N$-trees and minimal laminations.} Groups Geom. Dyn. \textbf{9} (2015), no. 2, 567--597.
\href{https://doi.org/10.4171/GGD/321}{doi:10.4171/GGD/321}




\bibitem{CV}
M. Culler, K. Vogtmann, \emph{Moduli of graphs and automorphisms of free groups}, Invent. Math. \textbf{84} (1986), no. 1. 91--119.
\href{https://doi.org/10.1007/BF01388734}{doi:10.1007/BF01388734}



\bibitem{DGT23}
S. Dowdall, R. Gupta and S. J. Taylor,
\emph{Orientable maps and polynomial invariants of free-by-cyclic groups.} Adv. Math. \textbf{415} (2023), Paper No. 108872


\bibitem{DKL15}
S. Dowdall, I. Kapovich,  and C. J. Leininger, \emph{Dynamics on free-by-cyclic groups.} Geom. Topol. \textbf{19} (2015), no. 5, 2801--2899.
\href{https://doi.org/10.2140/gt.2015.19.2801}{doi:10.2140/gt.2015.19.2801}


\bibitem{DKT16}
S. Dowdall, I. Kapovich,  and S. J. Taylor, \emph{Cannon-Thurston maps for hyperbolic free group extensions.}
Israel J. Math. \textbf{216} (2016), no. 2, 753--797.
\href{https://doi.org/10.1007/s11856-016-1426-2}{doi:10.1007/s11856-016-1426-2}

\bibitem{DSU17}
T. Das, D. Simmons, and M. Urbanski,
\emph{Geometry and dynamics in Gromov hyperbolic metric spaces. With an emphasis on non-proper settings.}
Mathematical Surveys and Monographs, 218.
American Mathematical Society, Providence, RI, 2017.
ISBN: 978-1-4704-3110-5.


\bibitem{DT18}
S. Dowdall and S. J. Taylor,
\emph{Hyperbolic extensions of free groups.}
Geom. Topol. \textbf{22} (2018), no. 1, 517--570.
\href{https://doi.org/10.2140/gt.2018.22.517}{doi:10.2140/gt.2018.22.517}



\bibitem{Edgar}
G. Edgar, \emph{Measure, topology, and fractal geometry. Second edition.} Undergraduate Texts in Mathematics. Springer, New York, 2008.  ISBN: 978-0-387-74748-4

\bibitem{Falconer}
K. Falconer, \emph{Fractal geometry. Mathematical foundations and applications. Third edition.} John Wiley \& Sons, Ltd., Chichester, 2014.  ISBN: 978-1-119-94239-9



\bibitem{Fe}
H. Federer,
\emph{Geometric measure theory.}
Die Grundlehren der mathematischen Wissenschaften, Band 153. Springer-Verlag New York, Inc., New York, 1969, ISBN: 978-3-540-60656-7.

\bibitem{FH11}
M. Feighn and M. Handel, \emph{The recognition theorem for $Out(F_n)$}, Groups Geom. Dyn. \textbf{5} (2011), no. 1, 39--106.
\href{https://doi.org/10.4171/GGD/116}{doi:10.4171/GGD/116}


\bibitem{FeHu}
D.-J, Feng and W. Huang, 
\emph{Variational principles for topological entropies of subsets.}
J. Funct. Anal. \textbf{263} (2012), no. 8, 2228--2254


\bibitem{FM}
S. Francaviglia and A. Martino, \emph{Metric properties of outer space}, Publ. Mat. \textbf{55} (2011), no. 2, 433--473.
\href{https://doi.org/10.5565/PUBLMAT\_55211\_09}{doi:10.5565/PUBLMAT\_55211\_09}.



\bibitem{Fur}
H. Furstenberg,
\emph{Disjointness in Ergodic Theory, Minimal Sets, and a Problem in Diophantine Approximation.}
Mathematical Systems Theory \textbf{1} (1967), 1--49.
\href{https://doi.org/10.1007/BF01692494}{doi:10.1007/BF01692494}


\bibitem{HM19}
M. Handel and L. Mosher,
\emph{The free splitting complex of a free group, II: Loxodromic outer automorphisms.} Trans. Amer. Math. Soc. \textbf{372} (2019), no. 6, 4053--4105

\bibitem{HM20}
M. Handel and L Mosher,
\emph{Subgroup decomposition in $Out(F_n)$.} Mem. Amer. Math. Soc. \textbf{264} (2020), no. 1280, ISBN: 978-1-4704-4113-5

\bibitem{HM23}
M. Handel and L. Mosher,
\emph{Hyperbolic actions and 2nd bounded cohomology of subgroups of $Out(F_n)$.} Mem. Amer. Math. Soc. \textbf{292} (2023), no. 1454,  ISBN: 978-1-4704-6698-5

\bibitem{HL22}
A. Hilion and G. Levitt,
\emph{A Pansiot-type subword complexity theorem for automorphisms of free groups,} arXiv:2208.00676


\bibitem{How}
J. D. Howroyd, 
\emph{On Hausdorff and packing dimension of product spaces.}
Math. Proc. Cambridge Philos. Soc. \textbf{119} (1996), no. 4, 715--727

\bibitem{JKLO}
W. Jeon,  I. Kapovich,  C. J. Leininger, and K. Ohshika,
\emph{Conical limit points and the Cannon-Thurston map.}
Conform. Geom. Dyn. \textbf{20} (2016), 58--80.


\bibitem{Ka14}
I. Kapovich, \emph{Algorithmic detectability of iwip automorphisms.} Bull. Lond. Math. Soc. \textbf{46} (2014), no. 2, 279--290.
\href{https://doi.org/10.1112/blms/bdt093}{doi:10.1112/blms/bdt093}



\bibitem{KB02}
I. Kapovich, and N. Benakli, \emph{Boundaries of hyperbolic groups.} Combinatorial and geometric group theory (New York, 2000/Hoboken, NJ, 2001), 39--93,
Contemp. Math., 296, Amer. Math. Soc., Providence, RI, 2002


\bibitem{KL09}
I. Kapovich and M. Lustig,
\emph{Geometric intersection number and analogues of the curve complex for free groups.} Geom. Topol. \textbf{13} (2009), no. 3, 1805--1833.
\href{https://doi.org/10.2140/gt.2009.13.1805}{doi:10.2140/gt.2009.13.1805}


\bibitem{KL10}
I. Kapovich and M. Lustig, \emph{Intersection form, laminations and currents on free groups.} Geom. Funct. Anal. \textbf{19} (2010), no. 5, 1426--146.
\href{https://doi.org/10.1007/s00039-009-0041-3}{doi:10.1007/s00039-009-0041-3}


\bibitem{KL14}
I. Kapovich and M. Lustig, \emph{Invariant laminations for irreducible automorphisms of free groups}, Q. J. Math. \textbf{65} (2014),  no. 4, 1241--1275.

\bibitem{KL15}
I. Kapovich and M. Lustig, \emph{Cannon-Thurston fibers for iwip automorphisms of $F_N$},
J. Lond. Math. Soc. (2) \textbf{91} (2015), no. 1, 203--224.
\href{https://doi.org/10.1112/jlms/jdu069}{doi:10.1112/jlms/jdu069}



\bibitem{KL20}
M. Kapovich and B. Liu, \emph{Hausdorff dimension of non-conical limit sets.} Trans. Amer. Math. Soc. \textbf{373} (2020), no. 10, 7207--7224


\bibitem{Le09}
G. Levitt, \emph{Counting growth types of automorphisms of free groups.} Geom. Funct. Anal. \textbf{19} (2009), no. 4, 1119--1146.
\href{https://doi.org/10.1007/s00039-009-0016-4}{doi:10.1007/s00039-009-0016-4}


\bibitem{Lu22}
M. Lustig, \emph{How do topological entropy and factor complexity behave under monoid morphisms and free group basis changes?} preprint, arXiv:2204.00816



\bibitem{LM18}
C. Lecuire and M. Mj,
\emph{Horospheres in degenerate $3$-manifolds.}
Int. Math. Res. Not. IMRN \textbf{2018} (2018), no. 3, 816--861.
\href{https://doi.org/10.1093/imrn/rnw188}{doi:10.1093/imrn/rnw188}.



\bibitem{LM21}
D. Lind and B. Marcus, An introduction to symbolic dynamics and coding.
Second edition.Cambridge Math. Lib.
Cambridge University Press, Cambridge, 2021. ISBN: 978-1-108-84871-8.

%\bibitem{LS}
%R. Lyndon and P. E. Schupp,
%\emph{Combinatorial group theory.} Reprint of the 1977 edition. Classics in Mathematics. Springer-Verlag, Berlin, 2001; ISBN: 3-540-41158-5

\bibitem{LS18}
A. Lenzhen and J. Souto, \emph{Variations on a theorem of Birman and Series.} Ann. Inst. Fourier (Grenoble) \textbf{68} (2018), no. 1, 171--194.
\href{https://doi.org/10.5802/aif.3156}{doi:10.5802/aif.3156}




\bibitem{M97}
M. Mitra, \emph{Ending laminations for hyperbolic group extensions}, 
Geom. Funct. Anal. \textbf{7} (1997), no. 2, 379--402.
\href{https://doi.org/10.1007/PL00001624}{doi:10.1007/PL00001624}.

\bibitem{M98a}
M. Mitra, \emph{Cannon--Thurston maps for hyperbolic group extensions}, 
Topology \textbf{37} (1998), no. 3, 527--538.
\href{https://doi.org/10.1016/S0040-9383(97)00033-4}{doi:10.1016/S0040-9383(97)00033-4}.

\bibitem{Pa}
F. Paulin, \emph{The Gromov topology on $\mathbb{R}$-trees}, 
Topology Appl. \textbf{32} (1989), no. 3, 197--221.
\href{https://doi.org/10.1016/0166-8641(89)90061-3}{doi:10.1016/0166-8641(89)90061-3}.

\bibitem{S21}
J. Sapir,
\emph{A Birman--Series type result for geodesics with infinitely many self-intersections.}
Trans. Amer. Math. Soc. \textbf{374} (2021), no. 11, 7553--7568.
\href{https://doi.org/10.1090/tran/8424}{doi:10.1090/tran/8424}.

\bibitem{Q}
M. Queff\'elec, \emph{Substitution dynamical systems--spectral analysis.} 
Second edition. Lecture Notes in Mathematics, 1294. Springer-Verlag, Berlin, 2010. ISBN: 978-3-642-11231-7.

\bibitem{V15}
K. Vogtmann, \emph{On the geometry of outer space.} 
Bull. Amer. Math. Soc. (N.S.) \textbf{52} (2015), no. 1, 27--46.
\href{https://doi.org/10.1090/S0273-0979-2014-01454-3}{doi:10.1090/S0273-0979-2014-01454-3}.







\end{thebibliography}
\end{document}